\begin{document}
\title{BSDEs, c\`adl\`ag martingale problems and orthogonalisation
 under basis risk.}
\author{Ismail LAACHIR$^{(1, 2)}$ and Francesco RUSSO$^{(1)}$}
\date{March 18th 2016}
\maketitle
\vspace{10mm}
{\bf Abstract.}
\\
The aim of this paper is to introduce a new formalism for 
the deterministic analysis  associated
with backward stochastic differential equations 
 driven by general c\`adl\`ag martingales.
When the martingale is a standard Brownian motion, 
the natural deterministic analysis
is provided by the solution of a semilinear PDE of parabolic type.
A significant application concerns 
 the hedging problem under basis risk
of a contingent claim $g(X_T,S_T)$,
 where $S$ 
(resp. $X$) is an underlying price of a traded 
(resp. non-traded but observable)
asset, via the celebrated F\"ollmer-Schweizer decomposition.
We revisit the case when the couple of price processes $(X,S)$ is
a diffusion and we 
provide explicit expressions when $(X,S)$ is 
an exponential of additive processes.

\begin{footnotesize}
\begin{itemize}
\item[(1)] ENSTA ParisTech, Universit\'e Paris-Saclay,
 Unit\'e de Math\'ematiques
Appliqu\'ees, F-91120 Palaiseau, France.
 
\item[(2)] Universit\'e de Bretagne Sud, Lab-STICC, F-56321 Lorient Cedex,
 France.
\end{itemize}
\end{footnotesize}

{\bf Key words and phrases:} Backward stochastic differential equations, 
c\`adl\`ag martingales,  basis risk, 
F\"ollmer-Schweizer decomposition, quadratic hedging, martingale problem.\\


MSC Classification 2010:   91G80; 60G51; 60J25; 60J75; 60H10;  60H30; 91G80
\bigskip

\section{Introduction}

The motivation of this work comes from the hedging problem  in the presence of basis risk.
  When a derivative product  is based on a non traded or illiquid underlying,
 the specification of a hedging strategy becomes problematic. 
In practice one  frequent methodology consists 
 in constituting a portfolio based on a (traded and liquid) additional asset  
which is correlated with the  original one. 
The use of a non perfectly correlated asset induces a residual risk, often called \textbf{basis risk}, that makes the market incomplete. 
A common example is the hedging of a basket (or index based) option, only using  a subset of the 
assets composing the contract. Commodity markets also present many situations where basis risk plays an essential role, since many goods (as kerosene)
  do not have liquid future markets. For instance, kerosene consumers as airline companies, who want to hedge
 their exposure to the fuel use alternative future contracts, as crude oil or  heating oil. 
 The latter two commodities are strongly correlated to kerosene and their
 corresponding  future market 
is liquid. 
Weather derivatives constitute an example of contract written
 on a non-traded underlying,
 since they are based on heating temperature; natural gas or electricity are in general used to hedge these contracts.

In this work, we consider a maturity $T > 0$, a pair of processes $(X,S)$
and a contingent claim of the type $h:=g(X_T)$ or even  $h:=g(X_T,S_T)$.
  $X$ is a non traded  or illiquid, but observable  asset   and 
$S$ is a traded asset, 
correlated to $X$. 
In order to hedge this derivative, in general the practitioners
use the \textit{proxy} asset $S$ as a hedging instrument, but since 
the two assets are not perfectly correlated, a basis risk exists.
Because of the incompleteness of this market, one should define a risk aversion criterion. One possibility is to use the utility function based approach to define the hedging strategy, see for example \cite{Davis2006}, \cite{Henderson2002}, \cite{Monoyios2004}, \cite{Monoyios2007}, \cite{CeciGirardi1, CeciGirardi2}, \cite{Ankirchner2010NonTradable}. We mention also \cite{Ankirchner2013BasisRiskLiq} who consider the case when an investor has two possibilities, either hedge with an illiquid instrument, which implies liquidity costs, or hedge using a liquid correlated asset, which entails basis risk.
Another  approach is based on the quadratic hedging error criterion: it follows the idea of the seminal work of \cite{FS1991} that introduces the theoretical bases
 of the quadratic hedging in incomplete markets. In particular, they show the close relation of the quadratic hedging problem with a special semimartingale decomposition, known 
as the F\"ollmer-Schweizer (F-S) decomposition. 
The reader can consult \cite{FS1994, Schweizer2001} for basic information on F-S decomposition,
 which provides the so called {\it local risk minimizing hedging strategy} and it is a significant tool
for solving the {\it mean variance hedging problem} in an incomplete market.

\cite{Hulley2008} applied this general framework to the quadratic hedging under basis risk in a simple log-normal model. 
They consider for instance the two-dimensional Black-Scholes model 
for the non traded (but observable)  $X$  and the hedging asset 
$S$, described by 
\begin{eqnarray*}
d X_t &=& \mu_X X_t dt + \sigma_X X_t dW^X_t, \\
d S_t &=& \mu_S S_t dt + \sigma_S S_t dW^S_t,
\end{eqnarray*}
where $(W^X,W^S)$ is a standard correlated two-dimensional Brownian motion.
They derive the F-S decomposition of a European payoff 
$h = g(X_T)$, i.e. 
\begin{equation}
\label{introFS}
g(X_T) = h_0 + \int_0^T Z^h_u dS_u + L^h_T,
\end{equation}
where $L^h$ is a martingale which is strongly orthogonal to the martingale part of the hedging asset process $S$. Using the Feynman-Kac theorem, they relate
 the decomposition components $h_0$ and $Z^h$ to a PDE terminal-value problem.
This yields, as byproduct, the price and hedging portfolio of 
the European option $h$.
 These quantities can be expressed in closed formulae
 in the case of call-put options. Extensions of those results
to the case of stochastic correlation between the two assets $X$ and $S$,
have been performed by \cite{Ankirchner2012StochCorrel}.

 Coming back to the general case, the F-S decomposition of $h$ with respect to  the $\cF_t$-semimartingale $S$ 
can be seen as a special case of the well-known backward stochastic differential equations (BSDEs). We look for a triplet  of processes $(Y,Z,O)$ 
being solution of an equation of the form
\begin{equation}
\label{BSDEI}
Y_t = h + \int_t^T \hat{f}(\w, u, Y_{u-}, Z_u)d V^S_u - \int_t^T Z_u dM^S_u - (O_T-O_t),
\end{equation}
where $M^S$ (resp. $V^S$) is the local martingale (resp. the bounded variation process) appearing in the semimartingale decomposition of $S$, 
 $O$ is a strongly orthogonal martingale to $M^S$,
and $ \hat f(\omega,s, y, z) = - z$.  

 BSDEs were first studied in the Brownian framework by \cite{Pardoux1990adapted} with an early paper of 
\cite{Bismut73}.
\cite{Pardoux1990adapted} showed existence and uniqueness of
 the solutions when the coefficient $ \hat f$ is globally Lipschitz
with respect to  $(y, z)$ and $h$ being square integrable. 
It was followed by a long series of contributions, see for example
 \cite{ElKarouiSurveyBSDE} for a survey on Brownian based BSDEs
and applications to finance. 
For example, the Lipschitz condition was
essential in $z$ and only a monotonicity condition is required for $y$. 
Many other generalizations were considered.
We also drive the attention on the recent monograph by
 \cite{pardoux2014stochastic}.

 When the driving martingale in the BSDE is a Brownian motion, $h = g(S_T)$,
and $S$ is a Markov diffusion,
a solution of a BSDE  constitutes a probabilistic representation
of a semilinear parabolic PDE. In particular if $u$ is a solution of
 the mentioned PDE, then, roughly speaking 
setting $Y_t = u(t,S_t), Z = \partial_s u (t,S_t), O \equiv 0$, 
the triplet $(Y,Z,O)$ is a solution to \eqref{BSDEI}. 
 That PDE is a deterministic problem naturally related to the BSDE.
When $M^S$  is a general c\`adl\`ag martingale, the link 
between a BSDE \eqref{BSDEI} and a deterministic problem is less obvious.

As far as backward stochastic differential equations driven by a martingale, the first paper seems to be \cite{Buckdahn93}.
 Later, several authors have contributed to that subject, for instance \cite{Briand2002robustness} and  \cite{ElKarouiHuang1997}. 
More recently \cite[Theorem 3.1]{CarFerSan08} give sufficient conditions for existence and  uniqueness for BSDEs of the form \eqref{BSDEI}. BSDEs with partial information driven by c\`adl\`ag martingales were investigated by \cite{CCR1, CCR2}.

In this paper we consider a forward-backward SDE, issued from \eqref{BSDEI}, 
where the forward process solves a sort of martingale problem (in the strong probability sense, i.e. where
the underlying filtration is fixed)
instead of the usual stochastic differential equation appearing in the Brownian case.
More particularly we suppose the existence of an adapted 
continuous bounded variation process $A$,
of an operator $\a: \Da \subset \cC([0,T] \times \R^2)   \rightarrow 
\cL$, where $\cL$ is a suitable space of functions $[0,T] \times \R^2 \rightarrow \C$ (see \eqref{E23}),
such that $(X,S)$ verifies 
\begin{equation*}
\forall y \in  \Da,\quad \left(y(t,X_t,S_t) - \int_0^t \a(y)(u,X_{u-},S_{u-}) d A_u\right)_{0\leq t\leq T} 
\quad \text{is an $\cF_t$-local martingale.}
\end{equation*}
With $\a$ we associate the operator $\widetilde \a$ defined by
$$ \widetilde{\a}(y):= \a(\widetilde{y}) - y \a(id) - id \a(y), \quad
id(t,x,s) = s,  \widetilde y =  y \times id.$$ 
In the forward-backward BSDE we are interested in,  
 the driver $\hat f$ verifies 
\begin{equation} \label{EFormTildef} 
 \a (id)(t, X_{t-}(\omega), S_{t-}(\omega)) 
\hat f(\omega, t, y, z)     =  f(t,  X_{t-}(\omega), S_{t-}(\omega), y, z), 
\ (t,y,z) \in [0,T] \times \C^2, \omega \in \Omega,
\end{equation}
for some $f: [0,T] \times \R^2 \times \C^2 \rightarrow \C$.
 The main idea is to settle a deterministic problem 
which is naturally associated with the forward-backward SDE 
 \eqref{BSDEI}.

The deterministic problem consists in looking for a pair of functions $(y,z)$ 
which solves
\begin{align}
\label{E134}
\begin{split}
\a(y)(t, x, s) &= - f(t, x, s, y(t, x, s), z(t, x, s))   \\
\widetilde{\a}(y)(t, x, s)&=z(t, x, s) \widetilde{\a}(id)(t, x, s), 
\end{split}
\end{align}
for all $t\in [0,T]$ and $(x,s)\in\R^2$, with the terminal condition $y(T,.,.) = g(.,.)$.

Any solution to the deterministic problem \eqref{E134}
 will provide a solution $(Y,Z,O)$ to the corresponding BSDE,
setting $ Y_t = y(t, X_t, S_t), Z_t = z(t, X_{t-}, S_{t-}).$

For illustration, let us consider the elementary case when
 $S$ is a diffusion process fulfilling 
$ dS_t = \sigma_S(t,S_t) dW_t +  b_S (t,S_t) dt,$
and $X \equiv 0$. Then
$A_t \equiv t$, $\langle M^S\rangle = \int_0^\cdot (\sigma_S)^2(u,S_u)du$, $V^S = \int_0^\cdot b_S(u,S_u) du 
=\int_0^\cdot \a(id)(u,S_u) du;   $
  $ \a$ is the parabolic generator of $S$, $\Da = C^{1,2}([0,T] \times \R^2) \rightarrow \C$. In that case  \eqref{E134} becomes
\begin{align}
\label{E134Diff}
\begin{split}
\partial_t y(t, x, s) + (b_S \partial_s y + \frac{1}{2} \sigma_S^2 \partial_{ss} y) (t,x,s) 
 &= - f(t, x, s, y(t, x, s), z(t, x, s))  \\
z   &= \partial_s y 
\end{split}
\end{align}
In that situation  $\widetilde \a$ is closely related to the classical  derivation operator.
When $S$ models the price of a traded asset and
 $f(t,x,s,y,z) = -  b_S (t,s) z$, 
the resolution of \eqref{E134Diff} emerging from  the BSDE \eqref{BSDEI} with   \eqref{EFormTildef},
allows to solve the usual (complete market Black-Scholes type)  hedging problem 
with underlying $S$.
Consequently, in the general case, $\widetilde \a$ appears to be naturally 
associated with a  sort of ''generalized derivation map''.
A first link between the hedging problem in incomplete markets and generalized
 derivation operators
was observed for instance in \cite{gorSAA}.

The aim of our paper is threefold.
\begin{enumerate}[label=\arabic*)]
\item To provide  a general methodology for solving forward-backward SDEs
driven by a c\`adl\`ag martingale, via the solution of a deterministic problem generalizing
the classical partial differential problem appearing in the case of Brownian martingales.
\item To give applications to the hedging problem in the case of
 basis risk via the F\"ollmer-Schweizer decomposition. In particular
 we revisit the case when $(X,S)$ is a diffusion process
whose particular case of  Black-Scholes was treated by
  \cite{Hulley2008}, discussing some analysis related to a corresponding PDE. 
\item To furnish a quasi-explicit solution when the pair of processes $(X,S)$ 
is an exponential of additive processes, which constitutes a generalization of the results
of \cite{gor2013variance} and \cite{Hubalek2006}, established in the absence of basis risk.
 This yields a characterization of the hedging strategy in terms of 
Fourier-Laplace transform and the 
moment generating function.
\end{enumerate}
The paper is organized as follows. In Section \ref{S2}, we state 
the strong inhomogeneous martingale problem, 
 and we give several examples, as
 Markov flows and
 the exponential of additive processes. 
In Section  \ref{S3_BSDE}, we state the general form of a  BSDE driven by a martingale
and we associate  a deterministic problem with it.
 We show in particular that a solution for this deterministic problem yields a solution for the BSDE. 
In Section \ref{S4_BSDE}, we apply previous methodology to the F-S decomposition problem  under basis risk.
 In the case of exponential of additives processes, we 
obtain a quasi-explicit decomposition of the mentioned F-S decomposition.

\section{Strong inhomogeneous martingale problem}
\label{S2}

\setcounter{equation}{0}

\subsection{General considerations}

In this paper $T$ will be a strictly positive number. We consider a complete probability space $(\Omega, \cF, \P)$
 with a filtration $(\cF_t)_{t\in[0,T]}$, fulfilling the usual conditions.
By default, all the processes will be indexed by $[0,T]$.
Let $(X,S)$  a couple of $\cF_t$-adapted 
processes. We will often mention concepts as {\it martingale, semimartingale, adapted, predictable}
without mentioning the underlying filtration $(\cF_t)_{t\in[0,T]}$.
Given a bounded variation function $\phi:[0,T] \rightarrow \R$,
we will denote by $t \mapsto \Vert \phi \Vert_t$ the associated total
variation function.

We introduce a notion  of  martingale type problem related to $(X,S)$, which is a generalization of 
a stochastic differential equation.
We emphasize that the present notion looks similar to the classical notion of \cite{StroockVaradhanBook} but here the notion is 
probabilistically strong and relies on a fixed filtered probability space.
In the context of Stroock and Varadhan, however,  a solution is a
 probability measure and the underlying process is the canonical process on 
some canonical space. Here a filtered probability space is given at
the beginning. A similar notion was introduced in \cite{trutnau}
Definition 5.1. 
A priori, we will not suppose that our
strong martingale problem is well-posed (existence and uniqueness).

\begin{definition}
\label{DSMP}
Let $\cO$ be an open set of $\R^2$. Let $(A_t)$ be an $\cF_t$-adapted
  bounded variation continuous process, such that a.s. $
dA_t \ll d\rho_t,$ for some bounded variation function $\rho$, and $\a$ a map
\begin{equation} \label{D26}
\a : \Da\subset \mathcal{C}([0,T]\times \cO, \C) \longrightarrow \mathcal{L},
\end{equation} 
where 
\begin{align}
\label{E23}
\begin{split}
\mathcal{L} = \lbrace f :& [0,T]\times \cO \rightarrow \C, \text{such that for every compact $K$ of $\cO$ } \\
& \norm{f}_K(t):=\sup_{(x,s)\in K}|f(t,x,s)| <\infty \quad d\rho_t\; a.e.\rbrace.
\end{split}
\end{align}
We say that a couple of c\`adl\`ag processes $(X,S)$ is a solution of the 
{\bf strong martingale problem} related to $(\Da, \a, A)$ , 
if for any $g\in \Da$, $(g(t,X_t,S_t))$ is a  semimartingale with
 continuous bounded variation component such that
\begin{equation}
\label{aFinite} 
\int_{0}^{t} |\a(g)(u,X_{u-}, S_{u-})| d\norm{A}_u  <\infty \; a.s.
\end{equation}
and
\begin{equation}
\label{SMgDecomp} 
t \longmapsto g(t,X_t, S_t) - \int_{0}^{t} \a(g)(u,X_{u-}, S_{u-}) dA_u 
\end{equation} is an $\cF_t$- local martingale.
\end{definition}

We start introducing some significant notations. 
\begin{notation} \label{NSpecial}\
\begin{enumerate} [label=\arabic*)]
\item  $id: (t,x,s)\longmapsto s$.
\item For any $y \in \mathcal{C}([0,T]\times \cO)$, we denote by $\widetilde{y}$ the function $\widetilde{y}:=y\times id$,  i.e. 
\begin{equation} \label{EidS}
(y\times id)(t,x,s) = s y(t,x,s).
\end{equation}
\item Suppose that $id\in \Da$. For $y \in \Da$ such that $\widetilde{y}\in\Da$, we set
\begin{equation}
\label{EAtilde}
\widetilde{\a}(y):= \a(\widetilde{y}) - y \a(id) - id \a(y).
\end{equation}
\end{enumerate}
 
\end{notation}
As we have mentioned in the introduction, the map $\widetilde{\a}$ will play
 the role of a generalized derivative. We state first a preliminary lemma.
\begin{lemma}\label{LC}
Let $(X,S)$ be a solution of the strong martingale problem related to $(\Da, \a, A)$ (as in Definition \ref{DSMP}). Let $y$ be a function such that $y, id, y\times id \in \Da$. We set $Y = y(\cdot,X_\cdot,S_\cdot)$ and $M^Y$ be its martingale component given in \eqref{SMgDecomp}. Then
$
\langle M^Y, M^S \rangle = \int_0^\cdot \widetilde\a(y)(u,X_{u-},S_{u-})dA_u.
$

\end{lemma}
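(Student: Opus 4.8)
The plan is to obtain the identity by comparing two semimartingale decompositions of the product $SY=\widetilde y(\cdot,X_\cdot,S_\cdot)$. First I would write down the three decompositions furnished directly by the strong martingale problem of Definition~\ref{DSMP}, applied to $id$, to $y$ and to $\widetilde y=y\times id$, all of which are assumed to belong to $\Da$:
\[
S_t = S_0 + M^S_t + \int_0^t \a(id)(u,X_{u-},S_{u-})\,dA_u,
\]
\[
Y_t = Y_0 + M^Y_t + \int_0^t \a(y)(u,X_{u-},S_{u-})\,dA_u,
\]
\[
(SY)_t = S_0Y_0 + M^{\widetilde Y}_t + \int_0^t \a(\widetilde y)(u,X_{u-},S_{u-})\,dA_u .
\]
In each case the integrability condition \eqref{aFinite} makes the drift well defined, and the bounded variation part is continuous (by Definition~\ref{DSMP}) and, being a left-continuous integrand against the continuous process $A$, predictable.

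Next I would produce a second decomposition of $SY$ through integration by parts, $d(SY)_u=S_{u-}\,dY_u+Y_{u-}\,dS_u+d[S,Y]_u$. Because the bounded variation components of $S$ and $Y$ are continuous, they do not contribute to the covariation, so $[S,Y]=[M^S,M^Y]$. Substituting the drifts above and using $S_{u-}=id(u,X_{u-},S_{u-})$ and $Y_{u-}=y(u,X_{u-},S_{u-})$ yields
\[
(SY)_t = S_0Y_0 + \int_0^t S_{u-}\,dM^Y_u + \int_0^t Y_{u-}\,dM^S_u + [M^S,M^Y]_t + \int_0^t \big(id\,\a(y)+y\,\a(id)\big)(u,X_{u-},S_{u-})\,dA_u .
\]

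I would then equate the two expressions for $SY$ and move every local-martingale term to the right-hand side. Recalling the definition $\widetilde\a(y)=\a(\widetilde y)-y\,\a(id)-id\,\a(y)$ from \eqref{EAtilde}, the drift terms combine into $\int_0^\cdot \widetilde\a(y)(u,X_{u-},S_{u-})\,dA_u$, leaving
\[
[M^S,M^Y]_t - \int_0^t \widetilde\a(y)(u,X_{u-},S_{u-})\,dA_u = M^{\widetilde Y}_t - \int_0^t S_{u-}\,dM^Y_u - \int_0^t Y_{u-}\,dM^S_u ,
\]
a local martingale. Since the process $\int_0^\cdot \widetilde\a(y)(u,X_{u-},S_{u-})\,dA_u$ is continuous, adapted and of bounded variation, hence predictable, it is precisely a predictable bounded variation process whose difference with $[M^S,M^Y]$ is a local martingale; by definition this is the predictable compensator $\langle M^S,M^Y\rangle$, which gives the claim.

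The hard part is the last identification rather than the algebra: I must check that the candidate drift is genuinely predictable and then invoke the uniqueness of the angle bracket (equivalently, that a predictable bounded variation local martingale null at the origin vanishes) to conclude equality with $\langle M^S,M^Y\rangle$. A secondary point worth noting is that the existence of the predictable covariation $\langle M^S,M^Y\rangle$ is not assumed a priori but is obtained as a by-product of the computation, the displayed identity exhibiting an explicit predictable compensator for the finite-variation process $[M^S,M^Y]$.
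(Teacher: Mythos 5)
Your proposal is correct and takes essentially the same route as the paper: both rest on the integration-by-parts identity $[M^Y,M^S]=[Y,S]= SY-\int Y_{u-}\,dS_u-\int S_{u-}\,dY_u$, the vanishing of brackets against continuous bounded variation parts, the martingale problem applied to $id$, $y$ and $y\times id$, and a conclusion via uniqueness of the canonical decomposition of the special semimartingale $[M^Y,M^S]$. Your closing observation that the existence of $\langle M^Y,M^S\rangle$ is obtained as a by-product, by exhibiting an explicit predictable compensator, is just a slightly more explicit phrasing of the paper's final step.
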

\begin{proof}
In order to compute the angle bracket  $\langle M^Y, M^S \rangle$, we
 start by expressing the corresponding square bracket.
First, notice that, since $y, id \in \Da$ and $A$ is a continuous process, then the bounded variation parts of the semimartingales $S$ and $y(\cdot,X_\cdot, S_\cdot)$ are continuous. We have, for $t\in[0,T]$,
$$
[M^Y,M^S]_t = [Y,S]_t= (SY)_{t} - \int_0^t Y_{u-}dS_u - \int_0^t S_{u-} dY_u,
$$
where the first equality is justified by the fact that the square bracket of any process with a
 continuous bounded variation process vanishes. Using moreover 
the fact that $y\times id \in \Da$, the process
\begin{multline*}
[M^Y,M^S] - \int_0^\cdot \a(y\times id)(u,X_{u-},S_{u-})dA_u + 
\int_0^\cdot y(u,X_{u-},S_{u-}) \a(id)(u,X_{u-},S_{u-})dA_u  
\\ + \int_0^\cdot S_{u-}  \a(y)(u,X_{u-},S_{u-})dA_u
\end{multline*} 
is an $\cF_t$-local martingale.
Consequently, $[M^Y,M^S]$ is a special $\cF_t$-semimartingale, because the integrals above with respect to $A$ are predictable.
Finally, since $\langle M^Y, M^S\rangle - [M^Y,M^S]$ is a local martingale, the uniqueness of the canonical decomposition of the special semimartingale $[M^Y,M^S]$ yields the desired result.

\end{proof}
In the sequel, we will make the following assumption.
\begin{assumption}
\label{E0}
 $(\Da, \a, A)$ verifies the following axioms.
i)  $id \in \Da$. ii) $ (t,x,s) \mapsto s^2 \in \Da$.
\end{assumption}

\begin{corollary} \label{RSpecial} 
Let  $(X,S)$ be  a solution of the strong
 martingale problem introduced in  Definition \ref{DSMP} then,
under Assumption \ref{E0}, $S$ is a special semimartingale with
decomposition $M^S + V^S$ given below.
\begin{enumerate}[label=\roman*)]
\item \label{RSpecial_1} $V^S_t= \int_0^t \a (id)(u, X_{u-},S_{u-}) dA_u$.
\item \label{RSpecial_2} $\langle M^S \rangle_t = 
\int_0^t {\widetilde \a} (id)(u, X_{u-},S_{u-}) dA_u$.
\end{enumerate}
\end{corollary}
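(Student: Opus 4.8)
The plan is to derive both assertions directly from the definition of the strong martingale problem together with Lemma \ref{LC}, exploiting the two membership axioms of Assumption \ref{E0}.

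First I would establish the semimartingale decomposition and part \ref{RSpecial_1}. Since $id\in\Da$ by Assumption \ref{E0} i), Definition \ref{DSMP} applied to $g=id$ tells us that $S=id(\cdot,X_\cdot,S_\cdot)$ is a semimartingale with continuous bounded variation component, that $\int_0^t |\a(id)(u,X_{u-},S_{u-})|\,d\norm{A}_u<\infty$ a.s. by \eqref{aFinite}, and that
$$ M^S := S - \int_0^\cdot \a(id)(u,X_{u-},S_{u-})\,dA_u $$
is an $\cF_t$-local martingale. Setting $V^S_t=\int_0^t \a(id)(u,X_{u-},S_{u-})\,dA_u$, the process $V^S$ is continuous and adapted, hence predictable and of finite variation; therefore $S=S_0+M^S+V^S$ is the canonical decomposition of a special semimartingale. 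This is exactly part \ref{RSpecial_1}.

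For part \ref{RSpecial_2} I would invoke Lemma \ref{LC} with the choice $y=id$. Its hypotheses require $y,id,y\times id\in\Da$: the first two hold by Assumption \ref{E0} i), while $y\times id = id\times id$ is precisely the map $(t,x,s)\mapsto s^2$, which lies in $\Da$ by Assumption \ref{E0} ii). With this choice $Y=id(\cdot,X_\cdot,S_\cdot)=S$ and its martingale component is $M^Y=M^S$, so the conclusion of Lemma \ref{LC} reads
$$ \langle M^S\rangle = \langle M^S,M^S\rangle = \int_0^\cdot \widetilde{\a}(id)(u,X_{u-},S_{u-})\,dA_u, $$
which is part \ref{RSpecial_2}.

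There is essentially no hard analytic step here: the corollary is the specialization of Lemma \ref{LC} to $y=id$ together with a reading of Definition \ref{DSMP} at $g=id$. The only points requiring care are checking that the two axioms of Assumption \ref{E0} furnish exactly the membership conditions needed (namely $id\in\Da$ for the decomposition of $S$ and the map $(t,x,s)\mapsto s^2\in\Da$ so that $\widetilde{\a}(id)$ is defined and Lemma \ref{LC} applies), and the observation that the continuity of $V^S$ guarantees that $S$ is \emph{special} via the uniqueness of the canonical decomposition.
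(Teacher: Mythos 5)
Your proof is correct and follows exactly the paper's own argument: part \ref{RSpecial_1} from applying Definition \ref{DSMP} to $g=id$ (using Assumption \ref{E0} i) and the continuity, hence predictability, of the bounded variation part), and part \ref{RSpecial_2} from Lemma \ref{LC} with $y=id$, where Assumption \ref{E0} ii) supplies $id\times id\in\Da$. You have merely written out the details the paper leaves implicit.
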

\begin{proof}
\ref{RSpecial_1} is obvious since $id \in \Da$ and \ref{RSpecial_2}
is a consequence of 
 Lemma \ref{LC} and the fact that $(t,x,s) \mapsto s^2$
belongs to $\Da$.
\end{proof}

In many situations, the operator $\a$ is related to the classical infinitesimal generator, when it exists. We will 
make this relation explicit in the below example  of Markov processes.

\subsection{The case of Markov semigroup}
\label{SMarkov}

In this section we only consider a single process $S$ instead of a couple $(X,S)$. Without restriction of
generality $\cO$ will be chosen to be $\R$.
Here  $(\cF_t)$ will indicate the canonical filtration associated with $S$.
For this reason,  it is more comfortable to re-express Definition \ref{DSMP} 
into the following simplified version. 


\begin{definition}
\label{DSMPMod}
We say that $S$ is a solution of the 
{\bf strong martingale problem} related to $(\Da, \a, A)$
 with $A_t \equiv  t$, 
if there is a map
\begin{equation}
\label{pbMg}
\a : \Da\subset \mathcal{C}([0,T]\times \R) \longrightarrow \cL,
\end{equation} 
where 
\begin{align}
\label{ELX}
\begin{split}
\mathcal{L} = \lbrace f:&[0,T]\times \R \rightarrow \R, \text{such that for every compact $K$ of $\R$ } \\
& \norm{f}_K(t):=\sup_{s\in K}|f(t,s)| <\infty \quad dt \; a.e.\rbrace,
\end{split}
\end{align}
such that for any $g\in \Da$, $g(t,S_t)$ is a (special)  $\cF_t$-semimartingale with continuous bounded variation component
verifying
\begin{equation}
\label{aFiniteMarkov} 
\int_{0}^{T} |\a(g)(u,S_{u-})| du  <\infty \; a.s.
\end{equation}
and
\begin{equation}
\label{SMgDecompMarkov} 
t \longmapsto g(t,S_t) - \int_{0}^{t} \a(g)(u,S_{u-}) du 
\end{equation} is an $\cF_t$- local martingale.
\end{definition} 
Let $(X^{u,x}_t)_{t\geq u, x\in \R}$ be a time-homogeneous Markovian flow. In particular, if $S=X^{0,x}$ and   $f$ is a bounded Borel function,
 then 
\begin{equation}
\label{MarkovDef}
\E{f(S_t) | \cF_u} =  \E{f(X^{0,y}_{t-u})}\vert_{y = S_u},
\end{equation}
where $0 \leq u \leq t \leq T$.
We suppose moreover that $X_t^{0,x}$ is square integrable
for any $0 \le t \le T$ and $x \in \R$.
We denote by $E$ the linear space of functions such that
\begin{equation}
\label{setE}
E = \Big\{ f\in \cC \; \text{such that }\; \widetilde{f}:= s\mapsto \frac{f(s)}{1+s^2}\; \text{is uniformly continuous and bounded}\;\Big\},
\end{equation}
equipped with the  norm $$\norm{f}_E:= \sup_s{\frac{|f(s)|}{1+s^2}} < \infty.$$

The set $E$ can easily be shown to be a Banach space equipped with the norm $\norm{.}_E$. 
Indeed $E$ is a suitable space for Markov processes which are square integrable. In particular, \eqref{MarkovDef} 
remains valid if $f \in E$.
From now on we consider the family of linear operators $(P_t, t \ge 0)$ defined on the space $E$ by
\begin{equation}
\label{EPT}
P_t f(x) = \E{f(X_t^{0,x})}, \text{for } t\in[0,T], x\in \R, \quad \forall f \in E.
\end{equation}

We formulate now a fundamental assumption.
\begin{assumption}\
\label{A_Markov}
\begin{enumerate}[label=\roman*)]
\item \label{EStable} $P_t E \subset E$ for all $t\in[0,T]$.
\item \label{PBounded} The linear operator $P_t$ is bounded, for all $t\in[0,T]$.
\item \label{strongCont}  $(P_t)$ is {\bf strongly continuous}, i.e. $\displaystyle\lim_{t \rightarrow 0} P_t f = f$ in the $E$ topology.
\end{enumerate}

\end{assumption}

 Using the Markov flow property \eqref{MarkovDef},
 it is easy to see that the family of 
 continuous operators $(P_t)$ defined above has  the semigroup property. 
In particular, under Assumption \ref{A_Markov}, the family $(P_t)$ is strongly continuous semigroup on $E$.
Assumption \ref{A_Markov} is fulfilled in many common cases, as mentioned in Proposition \ref{PropMarkov}
and Remarks \ref{R25} and \ref{RStrongCont}.

The proposition below concerns the validity of items  \ref{EStable} and \ref{PBounded}. 
\begin{proposition} \label{PropMarkov}
Let $t\in[0,T]$.
Suppose that $x\mapsto X^{0,x}_t$ is differentiable in $L^2(\Omega)$ such that
\begin{equation}
\label{derFlow}
\sup_{x\in\R}\E{\vert\partial_x X^{0,x}_t\vert^2} < \infty.
\end{equation}
Then  $P_t f \in E$ for all $f\in E$ and  $P_t$ is a bounded operator.
\end{proposition}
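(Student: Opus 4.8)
The plan is to prove the two assertions---the membership $P_t f \in E$ (item \ref{EStable}) and the boundedness of $P_t$ (item \ref{PBounded})---by first extracting from the hypothesis a quantitative quadratic growth bound on the second moment of the flow. Since $x \mapsto X_t^{0,x}$ is differentiable in $L^2(\Omega)$, I would write, for the fixed $t$, the $L^2$-valued fundamental theorem of calculus $X_t^{0,x} = X_t^{0,0} + \int_0^x \partial_x X_t^{0,\xi}\,d\xi$, the integral being an $L^2(\Omega)$-valued (Bochner) integral, legitimate here because the uniform bound \eqref{derFlow} makes the map Lipschitz and $L^2$ has the Radon--Nikodym property. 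Applying Minkowski's integral inequality together with $C_1^2 := \sup_{\xi}\E{|\partial_x X_t^{0,\xi}|^2} < \infty$ yields $\|X_t^{0,x}\|_{L^2} \leq \|X_t^{0,0}\|_{L^2} + C_1|x|$, whence $\E{(X_t^{0,x})^2} \leq 2\|X_t^{0,0}\|_{L^2}^2 + 2C_1^2 x^2$. The decisive consequence is that
$$C_t := \sup_{x \in \R}\frac{1 + \E{(X_t^{0,x})^2}}{1+x^2} < \infty,$$
since the numerator grows at most quadratically in $x$.

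Granting this, boundedness is immediate. For $f \in E$, writing $f(s) = (1+s^2)\widetilde f(s)$ and using $|\widetilde f(s)| \leq \|f\|_E$, I get $|P_t f(x)| = |\E{f(X_t^{0,x})}| \leq \|f\|_E\,\E{1 + (X_t^{0,x})^2}$. Dividing by $1+x^2$ and taking the supremum over $x$ gives $\|P_t f\|_E \leq C_t\,\|f\|_E$, which establishes item \ref{PBounded} with operator norm at most $C_t$, once we know $P_t f \in E$.

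It remains to prove the membership $P_t f \in E$, i.e. that $\widetilde{P_t f}(x) = \frac{P_t f(x)}{1+x^2}$ is bounded and uniformly continuous; boundedness is exactly the estimate just obtained. For the continuity I would exploit that $L^2$-differentiability makes $x \mapsto X_t^{0,x}$ Lipschitz in $L^2(\Omega)$ with constant $C_1$, so that, writing $U = X_t^{0,x}$ and $U' = X_t^{0,x'}$, one has $\|U - U'\|_{L^2} \leq C_1|x - x'|$. I would then decompose $P_t f(x) - P_t f(x') = \E{(1+U^2)(\widetilde f(U) - \widetilde f(U'))} + \E{(U^2 - U'^2)\widetilde f(U')}$. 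The second expectation is controlled by $\|f\|_E\,\|U - U'\|_{L^2}\|U+U'\|_{L^2}$ via Cauchy--Schwarz, which after division by $1+x^2$ is $O(|x-x'|)$ uniformly, using the growth bound of Step 1. For the first expectation one invokes the uniform continuity of $\widetilde f$: given $\varepsilon > 0$ pick $\delta$ with $|a - b| < \delta \Rightarrow |\widetilde f(a) - \widetilde f(b)| < \varepsilon$, and split according to $\{|U - U'| < \delta\}$ and its complement. On the first event the contribution is at most $\varepsilon\,\E{1 + U^2} \leq \varepsilon C_t(1+x^2)$, harmless after dividing by $1+x^2$; the complementary event has probability at most $C_1^2|x-x'|^2/\delta^2$ by Chebyshev.

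The main obstacle is precisely this last, complementary term $\frac{1}{1+x^2}\E{(1 + U^2)\,\mathbf{1}_{\{|U - U'|\geq \delta\}}}$: because of the unbounded weight $1 + U^2$, smallness of the probability of the event does not by itself make the term small, and to obtain \emph{uniform} (rather than merely pointwise) continuity of $\widetilde{P_t f}$ one needs the family $\{(X_t^{0,x})^2/(1+x^2) : x \in \R\}$ to be uniformly integrable. I would establish this by splitting $X_t^{0,x} = X_t^{0,0} + R_x$ with $R_x = \int_0^x \partial_x X_t^{0,\xi}\,d\xi$: the part coming from $X_t^{0,0}$ is dominated by the fixed integrable variable $(X_t^{0,0})^2$ and is therefore uniformly integrable, while the genuinely delicate point is the uniform integrability of $\{R_x^2/(1+x^2)\}_x$, for which the bound $\|R_x\|_{L^2} \leq C_1|x|$ only provides $L^1$-boundedness. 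This is where the representation and the $L^2$ bound on the derivative must be used most carefully (for instance through a truncation argument, or by strengthening \eqref{derFlow} to a uniform integrability or higher-moment statement on $\partial_x X_t^{0,\xi}$); once this uniform integrability is in hand, the complementary term tends to $0$ uniformly in $x$, the uniform continuity of $\widetilde{P_t f}$ follows, and the proof that $P_t f \in E$ is complete.
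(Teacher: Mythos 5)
The gap you candidly flag at the end is genuine---as written, your proof does not close---but your proposed remedy (strengthening \eqref{derFlow} to a uniform-integrability or higher-moment hypothesis on $\partial_x X^{0,\xi}_t$) is not needed: the paper closes exactly this hole with the hypothesis as it stands. The missing tool is a \emph{pathwise} modulus of continuity. From \eqref{derFlow} and the mean value theorem in $L^2(\Omega)$ one gets $\E{\vert X_t^{0,x}-X_t^{0,y}\vert^2}\le c(t)\vert x-y\vert^2$, and the Garsia--Rodemich--Rumsey criterion (see \cite{BarlowYor1982}) then yields, up to a modification of the flow, a single random variable $\Gamma_t$ with $\E{\Gamma_t^2}<\infty$ such that $\vert X_t^{0,x}-X_t^{0,y}\vert\le\Gamma_t\vert x-y\vert^\alpha$ for a fixed $0<\alpha<1/2$; this is \eqref{XHolder}. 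This bound resolves both of your difficulties at once. First, taking $y=0$, it gives $\frac{(X_t^{0,x})^2}{1+x^2}\le 2\bigl((X_t^{0,0})^2+\Gamma_t^2\vert x\vert^{2\alpha}\bigr)/(1+x^2)\le 2\bigl((X_t^{0,0})^2+\Gamma_t^2\bigr)$: the whole family is dominated by one integrable random variable, which is precisely the uniform integrability you could not extract from $\Vert R_x\Vert_{L^2}\le C_1\vert x\vert$. Second, it changes the event one splits on: the paper never uses $\{\vert U-U'\vert\ge\delta\}$ (where, as you correctly observe, a Chebyshev bound on the probability is powerless against the weight $1+U^2$), but splits on $\{\Gamma_t\ge M_1\}$ instead. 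On $\{\Gamma_t\ge M_1\}$ the integrand is bounded by the fixed dominating variable, so its expectation is $<\epsilon$ uniformly in $x,y$ once $M_1$ is large; on the complement $\{\Gamma_t<M_1\}$, as soon as $\vert x-y\vert<(\delta_1/M_1)^{1/\alpha}$ the increment satisfies $\vert X_t^{0,x}-X_t^{0,y}\vert\le M_1\vert x-y\vert^\alpha<\delta_1$ \emph{deterministically}, so the uniform continuity of $\widetilde f$ applies on the entire event. That is the idea your argument is missing.

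Two further remarks. (i) Up to that point your scheme parallels the paper's: your Step 1 growth bound is the paper's \eqref{XMoment2}, the estimate $\Vert P_tf\Vert_E\le C_t\Vert f\Vert_E$ is the paper's \eqref{PtBounded}, and your decomposition of $f(U)-f(U')$ corresponds to the paper's terms $I_1$ and $I_2$. (ii) Note, however, that you bound $\frac{P_tf(x)-P_tf(x')}{1+x^2}$, whereas uniform continuity of $\widetilde{P_tf}$ concerns $\frac{P_tf(x)}{1+x^2}-\frac{P_tf(x')}{1+x'^2}$; the discrepancy $P_tf(x')\bigl(\frac{1}{1+x^2}-\frac{1}{1+x'^2}\bigr)$ is $O(\vert x-x'\vert)$ uniformly for $\vert x-x'\vert\le1$ by the growth bound, so this is harmless, but it should be said. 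The paper sidesteps it by decomposing the normalized difference directly, and it controls its second term $I_2$ by a mean value theorem applied to $z\mapsto\frac{1+\vert X_t^{0,z}(\omega)\vert^2}{1+z^2}$ together with \eqref{derFlow} and Cauchy--Schwarz, rather than by your Cauchy--Schwarz estimate on $U^2-U'^2$; both routes work for that term.
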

The proof of this proposition is reported in Appendix \ref{PropMarkovProof}.

\begin{remark} \label{R25}
Condition \eqref{derFlow} of Proposition \ref{PropMarkov} is fulfilled in the following two cases.
\begin{enumerate}[label=\arabic*)] 
\item \label{R25i} If $\Lambda$ is a L\'evy process, the Markov flow 
$X^{0,x} = x + \Lambda$ verifies $\partial_x X^{0,x} = 1$.
\item \label{R25ii}
 If $(X^{0,x}_t)$ is a diffusion process verifying
$$
X^{0,x}_t = x + \int_0^t b(X^{0,x}_u)du + \int_0^t \sigma(X^{0,x}_u) dW_u,
$$ where $b$ and $\sigma$ are $C^1_b$ functions.
\end{enumerate}
\end{remark}

\begin{remark} \label{RStrongCont}
Item \ref{strongCont} of Assumption \ref{A_Markov} is verified in the case of square integrable L\'evy processes, 
cf. Proposition \ref{LevyPtStongCont} in Appendix \ref{appA_BSDE}.
\end{remark}

For the rest of this subsection we work under Assumption \ref{A_Markov}.

 Item \ref{strongCont} of Assumption \ref{A_Markov} permits to introduce
  the definition of the 
 generator of $(P_t)$ as follows.
\begin{definition}\label{DGen}
The {\bf generator} $L$ of $(P_t)$ in $E$ is defined on the domain $D(L)$ which is the subspace of E defined by
\begin{equation} \label{MarkovGen}
D(L) = \Big\{ f\in E \; \text{such that }\; \lim_{t \to 0} \dfrac{P_t f-f}{t} \; \text{exists in} \;E\;\Big\}.
\end{equation}
We denote by $Lf$ the limit above.
We  refer to  \cite[Chapter 4]{JacobBookVol1},  
for more details.
\end{definition}

\begin{remark} \label{R100} 
If $f\in E$ such that there is $g\in E$ such that 
$$ (P_t f)(x) - f(x) - \int_0^t P_ug(x) du = 0, \; \forall t\geq 0, \;x\in E,$$ 
then $f\in D(L)$ and $g = Lf$. 
Previous integral is always defined as $E$-valued Bochner integral. 
Indeed, since $(P_t)$ is strongly continuous, then by \cite[Lemma 4.1.7]{JacobBookVol1}, we have  
\begin{equation} \label{LStable}
\Vert P_t \Vert  \le M_w e^{wt},
\end{equation}
 for some real $w$
and related constant $M_w.$
$\Vert \cdot \Vert$ denotes here the operator norm.
\end{remark}



A useful result which allows to deal with time-dependent functions is given below.
\begin{lemma}
\label{lemmaMarkov}
Let $f: [0,T] \to D(L) \subset E$. We suppose the following properties to be verified.
\begin{enumerate}[label=\roman*)]
  \item \label{lemMA1} $f$ is continuous as a $D(L)$-valued function, where $D(L)$ is equipped with the graph norm.
  \item \label{lemMA2} $f:[0,T] \to E$ is of class $C^1$.
\end{enumerate} Then, the below $E$-valued equality holds:
\begin{equation}
\label{lemmaSGroup}
P_t f(t,.) = f(0,.) + \int_{0}^{t} P_u(Lf(u,.)) du + \int_{0}^{t} P_u(\frac{\partial f}{\partial u}(u,.)) du , \; \forall t \in [0,T].
\end{equation} 
\end{lemma}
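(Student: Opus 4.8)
The statement is essentially a time-dependent version of the fundamental relation $P_t f = f + \int_0^t P_u(Lf)\,du$ for a strongly continuous semigroup. The natural strategy is to reduce the time-dependent equality \eqref{lemmaSGroup} to the stationary case (Remark \ref{R100}) by differentiating in $t$ and then integrating back. Concretely, I would fix $t\in[0,T]$ and study the $E$-valued function $u\mapsto \Phi(u):=P_u f(u,.)$ on $[0,t]$, aiming to show it is differentiable with derivative $\Phi'(u)=P_u(Lf(u,.))+P_u(\frac{\partial f}{\partial u}(u,.))$; integrating this from $0$ to $t$ gives \eqref{lemmaSGroup} since $\Phi(0)=f(0,.)$ and $\Phi(t)=P_t f(t,.)$.

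First I would establish the differentiability of $\Phi$ by splitting the increment $\Phi(u+h)-\Phi(u)$ into two pieces via the intermediate term $P_u f(u+h,.)$, namely
\[
\Phi(u+h)-\Phi(u) = \bigl(P_{u+h}-P_u\bigr)f(u+h,.) + P_u\bigl(f(u+h,.)-f(u,.)\bigr).
\]
For the second piece, hypothesis \ref{lemMA2} ($f$ of class $C^1$ as an $E$-valued map) gives $\frac{1}{h}(f(u+h,.)-f(u,.))\to \frac{\partial f}{\partial u}(u,.)$ in $E$, and since $P_u$ is a bounded operator this yields $P_u(\frac{\partial f}{\partial u}(u,.))$ in the limit. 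For the first piece, I would use that $f(u+h,.)\in D(L)$ together with the semigroup differentiation rule $\frac{d}{du}P_u g = P_u Lg = L P_u g$ for $g\in D(L)$; the increment $(P_{u+h}-P_u)f(u+h,.)$ divided by $h$ should converge to $P_u(Lf(u,.))$, using continuity of $u\mapsto f(u,.)$ in the graph norm (hypothesis \ref{lemMA1}) to replace $f(u+h,.)$ by $f(u,.)$ inside the $L$ term.

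The main obstacle is controlling the first piece uniformly: $\frac1h(P_{u+h}-P_u)f(u+h,.)$ involves \emph{both} a moving semigroup index \emph{and} a moving argument $f(u+h,.)$, so one cannot directly invoke the stationary derivative formula. The clean way to handle this is to write, for $g\in D(L)$, $(P_{u+h}-P_u)g = \int_u^{u+h}P_r(Lg)\,dr$ as an $E$-valued Bochner integral (this is exactly the content of Remark \ref{R100}), apply it with $g=f(u+h,.)$, and then estimate
\[
\Bigl\| \tfrac1h\int_u^{u+h}P_r\bigl(Lf(u+h,.)\bigr)\,dr - P_u\bigl(Lf(u,.)\bigr)\Bigr\|_E.
\]
Here the exponential bound $\|P_r\|\le M_w e^{wr}$ from \eqref{LStable} controls the operator norms on the compact interval, strong continuity of $(P_r)$ handles $P_r\to P_u$, and graph-norm continuity of $u\mapsto f(u,.)$ (hypothesis \ref{lemMA1}) ensures $Lf(u+h,.)\to Lf(u,.)$ in $E$; combining these via a triangle inequality gives convergence to $P_u(Lf(u,.))$.

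Finally, once $\Phi$ is shown to be differentiable on $[0,t]$ with the asserted derivative, I would note that $\Phi'$ is continuous in $u$ — again by strong continuity of $(P_u)$ together with the continuity hypotheses on $f$ — so $\Phi$ is $C^1$ as an $E$-valued function and the fundamental theorem of calculus for Bochner integrals applies, yielding \eqref{lemmaSGroup}. The two integrals on the right-hand side are well-defined $E$-valued Bochner integrals because their integrands are continuous on the compact interval $[0,t]$ and $\|P_u\|$ is bounded there.
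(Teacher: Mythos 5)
Your proof is correct and follows essentially the same route as the paper's: the same splitting of the increment of $u\mapsto P_u f(u,\cdot)$ into the two pieces $(P_{u+h}-P_u)f(u+h,\cdot)$ and $P_u\bigl(f(u+h,\cdot)-f(u,\cdot)\bigr)$, the same use of the integral representation $(P_{u+h}-P_u)g=\int_u^{u+h}P_r(Lg)\,dr$ for $g\in D(L)$ combined with graph-norm continuity of $u \mapsto f(u,\cdot)$ and the operator bound \eqref{LStable}, and the same final integration step. The only cosmetic differences are that the paper computes just the right-derivative and invokes continuity of the candidate derivative, and that it obtains the integral identity from \cite[Lemma 4.1.14]{JacobBookVol1} rather than from Remark \ref{R100}, which in fact states the converse implication.
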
 
\begin{remark} \label{RStable}
We observe that the two integrals above can be considered
 as $E$-valued Bochner integrals 
because, by hypothesis, $t \mapsto Lf(t,\cdot)$ and $ t \mapsto \frac{\partial f}{\partial t}(t,.)$
are continuous with values in $E$, and so 
we can apply again \eqref{LStable} in Remark \ref{R100}.

\end{remark}

\begin{proof} 
It will be enough to show that
\begin{equation} \label{ElemmaSGroup} 
 \frac{d}{dt} P_t f(t,.) = P_t(Lf(t,.)) + P_t \left(\frac{\partial f}{\partial t}(t,.) \right), \; \forall t \in [0,T].
\end{equation}
 In fact, even if Banach space valued, a differentiable function at each point is also absolutely continuous.

Since the right-hand side of \eqref{ElemmaSGroup} is continuous
it is enough to show that the right-derivative of $t \mapsto P_tf(t,\cdot)$
coincides with the right-hand side of \eqref{ElemmaSGroup}.
Let $h>0$. We evaluate
$ P_{t+h}f(t+h,.) - P_tf(t,.) = I_1(t,h) + I_2(t,h), $
where 
$ I_1(t,h) = P_{t+h}f(t+h,.) - P_tf(t+h,.), I_2(t,h) = 
P_{t}f(t+h,.) - P_tf(t,.).$
Now by \cite[Lemma 4.1.14]{JacobBookVol1}, we get
$ I_1(t,h) := P_{t+h}f(t+h,.) - P_tf(t+h,.) = \int_{t}^{t+h} P_u Lf(t+h, .)du.$
We divide by $h$ and we get 
\begin{eqnarray*}
\norm{ \frac{1}{h} \int_{t}^{t+h} (P_u Lf(t+h, .) - P_u Lf(t, .))du}_E
 &\leq&  \frac{1}{h} \int_{t}^{t+h} du \norm{P_u\left\{ Lf(t+h, .)-Lf(t, .)\right\}}_E  \\
&\leq& \norm{Lf(t+h, .)-Lf(t, .)}_E \frac{1}{h} \int_{t}^{t+h} \norm{P_u}du   \\
&\leq& \norm{f(t+h, .)-f(t, .)}_{D(L)} \frac{1}{h} \int_{t}^{t+h} \norm{P_u}du,  
\end{eqnarray*}where $\|.\|_{D(L)}$ is the graph norm of $L$. This converges to zero (notice that $\norm{P_u}$ is bounded by \eqref{LStable}),
 and we get that
$
\frac{1}{h}I_1(t,h) \xrightarrow{h\to 0}  P_t(Lf(t,.)).
$ We estimate now $I_2(t,h)$. 
$$
\norm{\dfrac{P_tf(t+h,.) - P_tf(t,.)}{h} -P_t(\frac{\partial f}{\partial t}(t,.)) }_{E} \leq 
\norm{P_t} \norm{\dfrac{f(t+h,.) - f(t,.)}{h} -\frac{\partial f}{\partial t}(t,.) }_{E}. 
$$ This goes to zero as $h$ goes to zero, by Assumption \ref{lemMA2}.
This concludes the proof of Lemma \ref{lemmaMarkov}.

\end{proof}

We can now discuss the fact that a process $S = X^{0,x}$, where $(X^{u,x}_t)_{t\geq u, x\in \R}$ is a Markovian
flow (as precised at the beginning of Section \ref{SMarkov})
 is a solution to our (time inhomogeneous) strong martingale problem introduced in Definition \ref{DSMPMod}.

\begin{theorem}
\label{thMarkov}
We denote
 $$ \Da = \lbrace g:[0,T] \to D(L) \; \text{such that assumptions 
\ref{lemMA1} and \ref{lemMA2} of Lemma \ref{lemmaMarkov} 
are fulfilled} \rbrace  $$ 
and, for $g \in \Da$, $\;\a(g)(t,s) = \frac{\partial g}{\partial t}(t,s) + 
Lg(t,\cdot)(s), \; \forall t \in [0,T], s \in \R.$

Then $S$ is a solution of the strong martingale problem introduced in Definition \ref{DSMPMod}.
\end{theorem}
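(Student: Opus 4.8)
The plan is to verify the three requirements of Definition~\ref{DSMPMod} for an arbitrary $g \in \Da$, writing $\a(g)(u,s) = \frac{\partial g}{\partial u}(u,s) + Lg(u,\cdot)(s)$ and setting
\[
N_t := g(t,S_t) - \int_0^t \a(g)(u,S_{u-})\,du .
\]
The heart of the matter is the (local, in fact true) martingale property of $N$; the integrability condition~\eqref{aFiniteMarkov} and the semimartingale structure then follow easily. First I would dispose of~\eqref{aFiniteMarkov}. Assumptions~\ref{lemMA1} and~\ref{lemMA2} of Lemma~\ref{lemmaMarkov} guarantee that $u \mapsto \frac{\partial g}{\partial u}(u,\cdot)$ and $u \mapsto Lg(u,\cdot)$ are continuous maps $[0,T]\to E$, hence so is $u \mapsto \a(g)(u,\cdot)$, and in particular $C := \sup_{u\in[0,T]}\norm{\a(g)(u,\cdot)}_E$ is finite. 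By the definition~\eqref{setE} of $\norm{\cdot}_E$ this yields the pointwise bound $|\a(g)(u,s)| \le C(1+s^2)$, whence $\int_0^T |\a(g)(u,S_{u-})|\,du \le C\,T\,(1 + \sup_{u\le T} S_{u-}^2) < \infty$ almost surely, the supremum being finite because $S$ is c\`adl\`ag. The same bound and the square integrability of the flow make $N_t$ integrable for each $t$.

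For the martingale property it suffices to show $\E{N_t \mid \cF_s} = N_s$ for $0 \le s \le t \le T$, and the key idea is to feed the deterministic $E$-valued identity of Lemma~\ref{lemmaMarkov} into the Markov flow property~\eqref{MarkovDef}, which remains valid for integrands in $E$. Applying Lemma~\ref{lemmaMarkov} (whose proof is insensitive to the length of the time interval) to the time-shifted map $h(r,\cdot):=g(s+r,\cdot)$ on $[0,T-s]$ --- which inherits~\ref{lemMA1} and~\ref{lemMA2} and satisfies $\a(h)(r,\cdot)=\a(g)(s+r,\cdot)$ --- at the point $r=t-s$, and substituting $v=s+r$, I obtain the $E$-valued equality
\[
P_{t-s}\,g(t,\cdot) = g(s,\cdot) + \int_s^t P_{v-s}\,\a(g)(v,\cdot)\,dv .
\]

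Evaluating this identity at the $\cF_s$-measurable point $S_s$, and using that point evaluation $f \mapsto f(S_s)$ is a bounded linear functional on $E$ so that it commutes with the $E$-valued Bochner integral, the left-hand side becomes $(P_{t-s}g(t,\cdot))(S_s) = \E{g(t,S_t)\mid\cF_s}$ and each integrand becomes $(P_{v-s}\a(g)(v,\cdot))(S_s)=\E{\a(g)(v,S_v)\mid\cF_s}$, by the relation $\E{\varphi(S_t)\mid\cF_s}=(P_{t-s}\varphi)(S_s)$ coming from~\eqref{MarkovDef}. Interchanging the conditional expectation with the $dv$-integral --- legitimate by the domination $|\a(g)(v,S_v)|\le C(1+S_v^2)$ together with $\sup_v\E{S_v^2}<\infty$ --- and replacing $S_{v-}$ by $S_v$ under $dv$ (licit since a c\`adl\`ag path jumps at most countably often), I arrive at $\E{g(t,S_t)\mid\cF_s} - g(s,S_s) = \E{\int_s^t \a(g)(v,S_{v-})\,dv \mid \cF_s}$. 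Adding the $\cF_s$-measurable quantity $-\int_0^s \a(g)(v,S_{v-})\,dv$ to both sides gives exactly $\E{N_t\mid\cF_s}=N_s$. The remaining requirements of Definition~\ref{DSMPMod} are then automatic: $t\mapsto\int_0^t\a(g)(u,S_{u-})\,du$ is a continuous bounded variation process, so $g(\cdot,S_\cdot)=N+\int_0^\cdot\a(g)(u,S_{u-})\,du$ is a special semimartingale with continuous bounded variation component, and~\eqref{SMgDecompMarkov} is precisely the martingale $N$.

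I expect the main obstacle to be this passage from the deterministic, $E$-valued statement of Lemma~\ref{lemmaMarkov} to the probabilistic conditional identity at the random point $S_s$: it requires the time-homogeneity of the flow, the extension~\eqref{MarkovDef} of the Markov property to $E$-valued functions, the time-shift reduction, the conditional Fubini interchange, and the harmless replacement of $S_{v-}$ by $S_v$. The integrability bookkeeping producing the uniform bound $|\a(g)(u,s)|\le C(1+s^2)$ is routine but is exactly what licenses all of these interchanges.
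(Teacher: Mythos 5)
Your proof is correct and follows essentially the same route as the paper: the integrability condition \eqref{aFiniteMarkov} via the uniform bound $|\a(g)(u,s)|\le C(1+s^2)$ is exactly the content of Remark \ref{DALMarkov}, and the martingale property is obtained, as in the paper, by time-shifting $g$ so that Lemma \ref{lemmaMarkov} applies on $[0,T-s]$, invoking the Markov property \eqref{MarkovDef} (valid on $E$) and a Fubini interchange. The only cosmetic difference is that the paper tests against a bounded $\cF_u$-measurable random variable $G$ and shows the resulting function $\phi$ vanishes identically, whereas you evaluate the $E$-valued identity directly at the random point $S_s$; these are the same argument.
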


\begin{remark}
\label{DALMarkov}
Let $g \in \Da$. Since for each $t \in [0,T]$, by assumptions \ref{lemMA1}
 and \ref{lemMA2} of Lemma \ref{lemmaMarkov}, $\a(g)(t,\cdot) \in E$, then, obviously $\a(g) \in \cL$.
Moreover, the same assumptions imply that 
$t\mapsto \frac{\partial g}{\partial t} (t,\cdot)$ and $t\mapsto Lg(t,\cdot)$ are continuous on $[0,T]$ and hence are bounded, i.e.
$
\sup_{t\in[0,T]} \norm{\frac{\partial g}{\partial t} (t,\cdot)}_E < \infty,\quad 
\sup_{t\in[0,T]} \norm{Lg(t,\cdot)}_E < \infty.
$ This yields in particular that Condition \eqref{aFiniteMarkov} is fulfilled.
\end{remark}

\begin{proof}[Proof of Theorem \ref{thMarkov}]\

It remains to show the martingale property \eqref{SMgDecompMarkov}.
We fix $0\le u<t\leq T$ and  a bounded random variable $\cF_u$-measurable
$G$. It will be sufficient to show that
\begin{equation}
\label{proofMg}
\E{A(u,t)} = 0,
\end{equation} where $
A(u,t) = G \left( g(t,S_t) - g(u,S_u) - \int_{u}^{t} \partial_r g(r,S_r)dr -
  \int_{u}^{t} L g(r,.)(S_r)dr \right)$.
By taking the conditional expectation of $A(u,t)$ with respect to $\cF_u$, 
using \eqref{MarkovDef} and  Fubini's theorem, we get
$
\E{A(u,t) | \cF_u} = G \phi(S_u),
$ where
$
\phi(x) = \left( P_{t-u}g(t,.) (x) - g(u,x) -  \int_{u}^{t} 
(P_{r-u}\partial_r g(r,.))(x)dr - \int_{u}^{t} (P_{r-u}L g(r,.)) (x)dr \right),
 \; \forall x\in\R.
$
We define $f:[0,T-u] \times \R \rightarrow \R$ 
by $f(\tau, \cdot) = g(\tau+u, \cdot)$.
$f$ fulfills the assumptions of Lemma \ref{lemmaMarkov}
with $T$ being replaced by $T-u$.
 By the change of variable $v=r-u$,
setting $\tau = t-u$,
the equality above becomes
$
\phi(x) = \left( P_{\tau}f(\tau,.) (x) - f(0,x) -  \int_{0}^{\tau} 
(P_{v}\partial_r f(v,.))(x)dv - \int_{0}^{\tau} (P_{v}L f(v,.)) (x)dr \right).
$
Now by Lemma \ref{lemmaMarkov} 
 we get that $\phi(x)=0, \; \forall x\in\R$. Consequently $ \E{A(u,t) | \cF_u} = 0$ and \eqref{proofMg} is fulfilled.
\end{proof}

\begin{remark} \label{R23Markov}
We introduce the following subspace $E^2_0$ of $C^2$.
\begin{equation}
\label{SetE2_0}
E^2_0=\lbrace f \in C^2 \; \text{such that $f''$ vanishes at infinity} \rbrace. 
\end{equation}
Notice that only the second derivative is supposed to vanish at infinity.
  $E^2_0$ is included in $  E$.
Indeed, if $f\in E^2_0$,  then the Taylor expansion $f(x) = f(0) + x f^\prime(0) + x^2 \int_0^1 (1-\alpha) f''(x\alpha) d\alpha$ implies that $\widetilde{f}$ is bounded.
On the other hand, by straightforward calculus we see that the first derivative $\frac{d \widetilde{f}}{dx}$ is bounded. This implies that $\widetilde{f}$ is uniformly continuous. 
 In several examples it is easy to identify $E^2_0$ as a significant subspace of $D(L)$, see for instance the 
example of L\'evy processes which is described below.
\end{remark}

\subsubsection{A significant particular case: L\'evy processes}
\label{SSLevy}

As anticipated above, an insightful example for Markov flows is the case of L\'evy processes. We specify below  the corresponding infinitesimal generator. 

Let $(\Lambda_t)$ be a square integrable L\'evy process with characteristic triplet $(A, \nu, \gamma)$, such that $\Lambda_0=0$. We refer to, e.g., \cite[Chapter 3]{ContTankovBook} for more details.

We suppose that $(\Lambda_t)$ is of pure jump, i.e. $A=0$ and $\gamma=0$. 
Since $\Lambda$ is square integrable, then (cf. \cite[Proposition 3.13]{ContTankovBook})
\begin{equation}
\label{LevySqInt}
\int_{\R} |s|^2 \nu(ds) < \infty
\end{equation}
and
\begin{equation}
\label{LevySqInt12}
c_1 := \frac{\E{\Lambda_t}}{t} = \int_{|s|>1} s \nu(ds) < \infty, \quad c_2 := 
\frac{\mathrm{Var}[\Lambda_t]}{t} = \int_{\R} |s|^2 \nu(ds) < \infty.
\end{equation}
Clearly the corresponding Markov flow is given by $X_t^{0,x} = x + \Lambda_t, t \ge 0, x\in \R$.

The classical theory of semigroup for L\'evy processes
is for instance developed in Section 6.31 of  \cite{SatoBook}.
There   one defines the
 semigroup $P$ on the set $C_0$ of continuous functions vanishing at
 infinity, equipped with the sup-norm $\norm{u}_\infty = \sup_s{|u(s)|}$.
 By  \cite[Theorem 31.5]{SatoBook}, 
the semigroup  $P$ is strongly continuous  on $C_0$, 
 with norm $\norm{P}=1$, and its generator $L_0$ is given by 
\begin{equation}
\label{L0}
L_0f(s) = \int \left(f(s+y)-f(s)-yf^\prime(s)\1_{|y|<1}\right)\nu(dy).
\end{equation}
Moreover, 
the set $C^2_0$  of functions $f\in C^2$ such that $f$, $f^\prime$ and
 $f^{''}$ vanish at infinity,
is included in $D(L_0)$.
We remark that the domain $D(L)$ includes the classical domain $D(L_0)$.
In fact, we have
$ \Vert g \Vert_E \le \Vert g \Vert_{C_0}, \ \forall g \in C_0.$ 
Consequently,  if $f\in D(L_0) \subset C_0$, then for $t>0$
$$\norm{\dfrac{P_t f-f}{t}  - L_0 f}_E \leq  \norm{\dfrac{P_t f-f}{t} -
L_0 f }_{C_0}.$$
So $f\in D(L)$ and $Lf = L_0 f$.
Assumption \ref{A_Markov} is verified because of Proposition \ref{PropMarkov}, item \ref{R25i} of Remark \ref{R25} 
and Remark \ref{RStrongCont}.

The theorem below shows that the space $E^2_0$,
defined in Remark \ref{R23Markov}, is a subset of $D(L)$.
\begin{theorem}
\label{thLevyGen}
Let $L$ be the infinitesimal generator of the semigroup $(P_t)$.
Then $E^2_0\subset D(L)$ and 
\begin{equation}
\label{L_Levy}
Lf(s) = \int \left(f(s+y)-f(s)-yf^\prime(s)\1_{|y|<1}\right)\nu(dy), \ f \in E^2_0.
\end{equation} 
\end{theorem}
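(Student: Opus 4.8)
The plan is to reduce everything to the sufficient condition for membership in $D(L)$ recorded in Remark \ref{R100}. Concretely, for $f\in E^2_0$ I would set $g(s):=\int\bigl(f(s+y)-f(s)-yf'(s)\1_{|y|<1}\bigr)\nu(dy)$, i.e. the right-hand side of \eqref{L_Levy}, and then it is enough to prove two things: that $g\in E$, and that the $E$-valued integral identity $(P_tf)(x)-f(x)=\int_0^t P_ug(x)\,du$ holds for all $t\ge 0$, $x\in\R$. Once both are established, Remark \ref{R100} immediately yields $f\in D(L)$ and $Lf=g$, which is precisely the assertion. Note that $E^2_0\subset E$ is already known from Remark \ref{R23Markov}, so only these two points remain.

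First I would check $g\in E$, splitting the integral into the small-jump part ($|y|<1$) and the large-jump part ($|y|\ge 1$). For the small jumps I would use the second-order Taylor remainder $f(s+y)-f(s)-yf'(s)=y^2\int_0^1(1-\alpha)f''(s+\alpha y)\,d\alpha$; since $f''$ is bounded and, being continuous and vanishing at infinity, uniformly continuous, and since $\int_{|y|<1}y^2\nu(dy)<\infty$ by \eqref{LevySqInt}, this part is bounded and uniformly continuous in $s$. For the large jumps I would exploit that $f\in E$ gives the quadratic bound $|f(s)|\le\norm{f}_E(1+s^2)$, together with $\int_{|y|\ge 1}(1+|y|+y^2)\nu(dy)<\infty$ (a consequence of square integrability, cf. \eqref{LevySqInt12}), to bound $|g(s)|$ by $C(1+s^2)$. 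The genuinely delicate point is the uniform continuity of $s\mapsto g(s)/(1+s^2)$ for the large-jump part: writing $\widetilde f$ as in \eqref{setE} and the weight ratio $r(s,y)=\frac{1+(s+y)^2}{1+s^2}=1+\frac{2sy+y^2}{1+s^2}$, I would use that $r(\cdot,y)$ is bounded by $1+|y|+y^2$ and Lipschitz in $s$ with constant bounded by $2|y|+y^2$, so that a dominated-convergence estimate against $\nu|_{\{|y|\ge 1\}}$ transfers the uniform continuity of $\widetilde f$ to $g/(1+s^2)$.

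Next I would establish the integral identity by applying It\^o's formula to $f(x+\Lambda_t)$ (see e.g. \cite{ContTankovBook}). Since $\Lambda$ is pure jump ($A=0$), there is no Brownian term, and after writing $\Lambda$ through its L\'evy--It\^o decomposition (truncation $\1_{|y|<1}$, zero drift) and compensating the resulting Poisson integrals, the finite-variation part of $f(x+\Lambda_t)$ is precisely $\int_0^t g(x+\Lambda_{u-})\,du$; the truncation term $-yf'\1_{|y|<1}$ in $g$ arises exactly from compensating the small-jump integral of $\Lambda$. It then remains to see that the compensated jump integrals are true martingales, so that taking expectations kills them: for the small jumps I would use an $L^2$ estimate (bounded $f''$ together with $\int_{|y|<1}y^2\nu<\infty$), whereas for the large jumps only square integrability of $\Lambda$ is available, which forces an $L^1$ rather than $L^2$ argument, controlling $\E{\int_0^t\int_{|y|\ge 1}|f(x+\Lambda_u+y)-f(x+\Lambda_u)|\,\nu(dy)\,du}$ by $C\int_0^t\bigl(1+\E{(x+\Lambda_u)^2}\bigr)\,du<\infty$. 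Taking expectations and applying Fubini (legitimate by the same integrability, and using $\Lambda_{u-}=\Lambda_u$ a.e.) yields $(P_tf)(x)-f(x)=\int_0^t P_ug(x)\,du$, and Remark \ref{R100} concludes.

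I expect the two main obstacles to be exactly the two places where the hypotheses are used at full strength: the uniform continuity of $g/(1+s^2)$ on the large-jump part, which is the reason the weighted space $E$, rather than the classical $C_0$ used for $L_0$, must be handled by hand; and the true-martingale property of the large-jump integral under mere square integrability of $\Lambda$, which is why the argument has to be organised so that the large jumps are treated in $L^1$ and only the small jumps in $L^2$.
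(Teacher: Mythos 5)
Your proof is correct, but it takes a genuinely different route from the paper's. The paper never establishes the integrated identity: its Lemma \ref{appLem1} proves only the \emph{pointwise} convergence of $(P_tf-f)/t$ to the right-hand side of \eqref{L_Levy}, using the small-time asymptotics $\lim_{t\to 0}\frac{1}{t}\E{g(\Lambda_t)}=\int g\,d\nu$ from \cite{Figueroa2008small}, and then Lemma \ref{appLem2} upgrades pointwise convergence to convergence in the $E$-norm by an abstract resolvent argument adapted from \cite[Lemma 31.7]{SatoBook}: one works in the closed subspace $\widetilde E$ of functions whose weighted version vanishes at infinity, shows by a maximum-principle computation that $qI-L^{\#}$ is injective for $q>|c_1|+c_2$, and combines this with the surjectivity of $qI-\widetilde L$ coming from quasi-contractivity (Remark \ref{RContraction} and \eqref{HilleYosidaQuasiContr}) to conclude $D(L^{\#})\subset D(\widetilde L)\subset D(L)$. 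You bypass this two-step scheme entirely by verifying the hypotheses of Remark \ref{R100}: membership $g:=L_0f\in E$ by the small-jump/large-jump splitting, and the mild identity $P_tf-f=\int_0^t P_ug\,du$ via It\^o's formula for the pure-jump L\'evy process, with the compensated small-jump integral controlled in $L^2$ and the large-jump one in $L^1$ so that taking expectations kills the martingale parts; both estimates use only $\int y^2\,\nu(dy)<\infty$ from \eqref{LevySqInt}--\eqref{LevySqInt12} and the linear growth of $f'$, and your weight-ratio argument for the uniform continuity of $g/(1+s^2)$ is sound. Both routes rely on the strong continuity of $(P_t)$ on $E$ (Proposition \ref{LevyPtStongCont}) — you need it for Remark \ref{R100} to apply, the paper needs it for the Hille--Yosida step. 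What your approach buys is probabilistic self-containedness: it needs neither \cite{Figueroa2008small} nor the functional-analytic machinery of Lemma \ref{appLem2} (the subspace $\widetilde E$, the injectivity/surjectivity interplay), at the price of a careful justification of the true-martingale property of the compensated Poisson integrals, which you correctly identify as the delicate point. The paper's route stays closer to the classical semigroup treatment of \cite{SatoBook} and produces by-products (quasi-contractivity, resolvent invertibility) that its appendix develops anyway.
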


A proof of this result, using arguments in \cite{Figueroa2008small}, is 
developed in Appendix \ref{appA_BSDE}.

In conclusion, $C^2$ functions whose  second derivative vanishes at infinity belong to $D(L)$. 
For instance, $id:s \mapsto s \in D(L)$.  On the other hand the function 
$s \mapsto s^2$
also belongs to $D(L)$.

In fact, for every $s\in \R, t\geq 0$ we have
$$
P_t f(s) - f(s) = \dfrac{\E{(s+\Lambda_t)^2} - s^2}{t} = \dfrac{2s c_1t + c_2t +c_1^2 t^2}{t}.
$$
Obviously, this converges to the function $s \mapsto  2s c_1 + c_2$ according to the $E$-norm.
Finally it follows that $L(s^2) = 2s c_1 + c_2$.

\begin{corollary} \label{CGenerator}
We have the  inclusion
$$
E^2_0 \oplus \{s \mapsto s^2 \} \subset  D(L)
$$
\end{corollary}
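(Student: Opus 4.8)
The plan is to recognise that this corollary is an immediate consequence of the two membership results already established, combined with the fact that $D(L)$ is a linear subspace of $E$. Indeed, as the domain of the linear operator $L$ (Definition \ref{DGen}), the set $D(L)$ is automatically stable under linear combinations, so the whole content of the statement reduces to checking that each of the two summands lies in $D(L)$ and that the direct sum is legitimately defined.

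First I would assemble the two ingredients. Theorem \ref{thLevyGen} yields $E^2_0 \subset D(L)$, while the explicit computation carried out immediately above the statement shows that $s \mapsto s^2 \in D(L)$, with $L(s^2)(s) = 2s c_1 + c_2$. Before combining them, I would verify that the direct sum is genuine, i.e. that $E^2_0 \cap \{s \mapsto s^2\} = \{0\}$: for a scalar $\lambda$, the second derivative of $\lambda (s \mapsto s^2)$ is the constant $2\lambda$, which vanishes at infinity only when $\lambda = 0$; hence no nonzero multiple of $s \mapsto s^2$ belongs to $E^2_0$, and the sum $E^2_0 \oplus \{s \mapsto s^2\}$ is well defined.

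Finally, I would conclude by linearity. Every element of $E^2_0 \oplus \{s \mapsto s^2\}$ is of the form $f + \lambda (s \mapsto s^2)$ with $f \in E^2_0$ and $\lambda \in \R$; since both $f$ and $s \mapsto s^2$ lie in the vector space $D(L)$, so does their linear combination, giving the claimed inclusion. I do not expect any real obstacle here: the substantive work sits in Theorem \ref{thLevyGen} and in the direct evaluation of $L(s^2)$, and this corollary merely records, as a bookkeeping step, that the two pieces may be added together inside the (necessarily linear) domain of the generator.
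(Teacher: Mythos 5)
Your proof is correct and follows exactly the route the paper intends: Corollary \ref{CGenerator} is stated without a written proof precisely because it combines Theorem \ref{thLevyGen} ($E^2_0 \subset D(L)$) with the preceding explicit computation showing $L(s^2) = 2sc_1 + c_2$, using the linearity of $D(L)$. Your additional check that the sum is direct (no nonzero multiple of $s \mapsto s^2$ has second derivative vanishing at infinity) is a small but welcome piece of bookkeeping the paper leaves implicit.
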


\subsection{Diffusion processes}
\label{E22}

Here we will suppose again $\cO = \R\times E$, where $E=\R$ or $]0,\infty[$.
A function $f: [0,T] \times \cO$ will be said to be {\bf globally Lipschitz} if
it is  Lipschitz with respect to the second and third variable
uniformly with respect to the first. 

We consider here the case of a diffusion process $(X,S)$ whose dynamics is described as follows:
\begin{align}
\label{expleDiff}
\begin{split}
d X_t &= b_X(t,X_t,S_t) dt + \Sum_{i=1}^{2}\sigma_{X,i}(t,X_t,S_t) d W^i_t\\
d S_t &= b_S(t,X_t,S_t) dt + \Sum_{i=1}^{d}\sigma_{S,i}(t,X_t,S_t) d W^i_t, 
\end{split}
\end{align}
 where $W = (W^1,W^2)$ is a standard two-dimensional Brownian motion
with canonical filtration $(\cF_t)$, 
$b_X$, $b_S$, $\sigma_{X,i}$, and $\sigma_{S,i}$, for $i=1,2$, 
$b, \sigma: [0,T] \times \R^2 \rightarrow \R$
are continuous functions which  are globally Lipschitz.

We also suppose $(X_0,S_0)$  to have all moments and that $S$ takes value in $E$. For instance a geometric Brownian motion takes value in $E=]0,\infty[$,
if it starts from a positive point.

\begin{remark}\label{Rallmoments}
Let $ p \ge 1$.
It is well-known, that there is 
a constant $C(p)$, only depending on $p$, such that
$$ \E{\sup_{t \le T} (\vert X_t \vert^ p + \vert S_t \vert^ p)} \le
C(p) ( \vert X_0 \vert ^ p + \vert S_0 \vert ^ p).$$
\end{remark}

By It\^o formula, for 
$f \in \mathcal{C}^{1,2}([0,T[\times \cO) 
 $, we have
\begin{eqnarray*}
d f(t,X_t,S_t) &=& \partial_t f(t,X_t,S_t) dt + \partial_s f(t,X_t,S_t) dS_t + \partial_x f(t,X_t,S_t) dX_t \\
&+&  \frac{1}{2} \left\lbrace \partial_{ss} f(t,X_t,S_t) d[S]_t +
 \partial_{xx} f(t,S_t,X_t) d[X]_t + 2 \partial_{sx} f(t,X_t,S_t) d[S, X]_t \right\rbrace. 
\end{eqnarray*}
We denote
$|\sigma_S|^2 = \Sum_{i=1}^{2}\sigma_{S,i}^2$, $\;|\sigma_X|^2 = 
\Sum_{i=1}^{2}\sigma_{X,i}^2$ and $\langle \sigma_S, \sigma_X\rangle = \Sum_{i=1}^{2}\sigma_{S,i}\sigma_{X,i}$.

Hence, the operator $\a$ can be defined as
$$
\a(f) = \partial_t f + b_S \partial_s f + b_X \partial_x f +  \frac{1}{2} \left\lbrace |\sigma_S|^2\partial_{ss} f + |\sigma_X|^2 \partial_{xx} f + 2 \langle \sigma_S, \sigma_X\rangle \partial_{sx} f  \right\rbrace, 
$$ associated with 
$A_t \equiv t$ and a domain 
$\Da =  \mathcal{C}^{1,2}([0,T[\times \cO) \cap 
\mathcal{C}^{1}([0,T]\times \cO)$. \\
Notice  that Assumption \ref{E0} is verified since 
$id$ and $id \times id$ belong to $\Da$.
Moreover, a straightforward calculation gives 
$$
\widetilde{\a}(f) = |\sigma_S|^2 \partial_s f(t,x,s) + \langle \sigma_S, \sigma_X\rangle \partial_x f(t,x,s)
$$
In particular, $\;\widetilde{\a}(id) = |\sigma_S|^2.$

\begin{remark} \label{RE22} By It\^o formula, for $0\leq u \leq T$, we obviously have
\begin{align*}
\begin{split}
f(u,X_u,S_u) - \int_0^u \a(f)(r,X_r,S_r) dr &=
\int_0^u \partial_x f(r,X_r,S_r) \left(\sigma_{X,1}(r,X_r,S_r) d W^1_r+\sigma_{X,2}(r,X_r,S_r) d W^2_r\right) \\
&+ \int_0^u \partial_s f(r,X_r,S_r) \left(\sigma_{S,1}(r,X_r,S_r) d W^1_r+\sigma_{S,2}(r,X_r,S_r) d W^2_r\right).
\end{split}
\end{align*}
\end{remark}

\subsection{Variant of diffusion processes}

Let $(W_t)$ be an $\cF_t$-standard Brownian motion
and $S$ be a solution of the SDE 

\begin{equation}
\label{exp3}
d S_t = \sigma(t,S_t) dW_t + b_1(t,S_t)da_t + b_2(t,S_t)dt,
\end{equation}
where $b_1, b_2, \sigma: [0,T] \times \R^2 \rightarrow \R$
are continuous functions which  are globally Lipschitz, and
 $a: [0,T] \rightarrow \R$ is an increasing function  
such that $da$ is singular with respect to Lebesgue measure.
 We set $A_t = a_t + t$.

The equation \eqref{exp3} can be written as
$$
d S_t = \sigma(t,S_t) dW_t + \left( b_1(t,S_t)\frac{d\a_t}{dA_t} +
 b_2(t,S_t)\frac{dt}{dA_t} \right) dA_t. 
$$

A solution $S$ of \eqref{exp3} verifies the strong martingale problem related to $(\Da, \a, A)$ 
with $A_t = t$, where $\Da=\mathcal{C}^{1,2}([0,T]\times \R)$ and for $f\in \Da$,
$$
 \a(f)(t,s) =  \left( \partial_t f(t,s) \frac{dt}{dA_t} + \partial_s f(t,s)
 \widetilde{b}(t,s) + \frac{1}{2}\partial_{ss} f(t,s) 
\widetilde{\sigma}^2(t,s) \right),
$$
where 
 $\widetilde{b}(t,s) = b_1(t,s) \frac{d\a_t}{dA_t}(t) + b_2(t,s)
\frac{dt}{dA_t}(t)$ and 
$\widetilde{\sigma}^2(t,s) = \sigma^2(t,s)\frac{dt}{dA_t}(t)$.

Indeed, by It\^o formula, the process $\; t \mapsto f(t,S_t) - \int_0^t  \a (f)(r,S_r) dA_r $
 is a local martingale.

\subsection{Exponential of additive processes}
\label{expleExpAdd}

A c\`adl\`ag process $(Z^1,Z^2)$ is said to be an {\bf additive process} if  $(Z^1,Z^2)_0=0$, $(Z^1,Z^2)$ is continuous
 in probability and it has independent increments, i.e. $(Z^1_t-Z^1_u,Z^2_t-Z^2_u)$ is independent of $\cF_u$ for $0 \leq u \leq t \leq T$ 
and $(\cF_t)$ is the canonical filtration associated with $(Z^1,Z^2)$.

In this section we restrict ourselves to the case of exponential of  additive processes which are semimartingales
(shortly semimartingale additive processes)  and we specify a corresponding 
martingale problem $(\a, \Da, A)$ for this process. This will be based on Fourier-Laplace transform techniques.
The couple of processes $(X, S)$ is defined by
$$ X = \exp(Z^1) \quad
S =  \exp(Z^2),
$$
where $(Z^1, Z^2)$ is a  semimartingale additive process taking values in $\R^2$. \\
We denote by $D$ the set
$$
D :=\lbrace z=(z_1, z_2)\in\C^2 |\quad \E{ |X_T^{{\rm Re}(z_1)} S_T^{{\rm Re}(z_2)}|}<\infty \rbrace.
$$
We convene that $\C^2 = \R^2 + i  \R^2$,
associating the couple $(z_1,z_2)$ with
$({\rm Re} z_1,  {\rm Re} z_2) + i ({\rm Im} z_1,  {\rm Im} z_2)$.
Clearly we have $D = (D \cap \R^2) + i \R^2$.
We also introduce  the notation
$$
D/2 := \lbrace z \in\C^2| \quad 2z\in D\} \subset D .
$$ 
\begin{remark} \label{R23}
By Cauchy-Schwarz inequality, $z,y \in D/2$ implies that $z+y\in D$.
\end{remark}


We denote by  $\kappa: D \rightarrow \C $,  the
 generating function of $(Z^1, Z^2)$, see for instance \cite[Definition 2.1]{gor2013variance}.
In particular
 $\kappa$ verifies
$
\exp(\kappa_t(z_1, z_2)) = \E{ \exp(z_1 Z_t^1 + z_2 Z_t^2)} = \E{ X_t^{z_1} S_t^{z_2}}.
$
We will adopt  similar notations and assumptions  as in \cite{gor2013variance},
 which treated the problem of variance optimal hedging for
a one-dimensional  exponential of additive process.
We  introduce 
 a  function $\rho$,  defined, for each $t\in[0,T]$, as follows:
\begin{eqnarray}
\label{ExpAddRhoS}
\rho_t(z_1,z_2,y_1,y_2) &:=& \kappa_t(z_1+y_1, z_2+y_2) - \kappa_t(z_1, z_2) - \kappa_t(y_1, y_2), \quad \text{ for } (z_1,z_2),(y_1,y_2)\in D/2,  \nonumber\\
\rho_t(z_1,z_2) &:=& \rho_t(z_1,z_2,\bar{z_1},\bar{z_2}), \quad \text{ for }  (z_1,z_2)\in D/2,  \\
\rho^S_t &:=& \rho_t(0,1) = \kappa_t(0,2) - 2\kappa_t(0,1), \text{ if } (0,1) \in D/2.  \nonumber
\end{eqnarray}
We remark that for $(z_1,z_2)\in D/2$, $t\mapsto \rho_t(z_1,z_2)$ is a real function.
These functions appear naturally in the expression of the angle brackets of  $(M^X,M^S)$
where $M^X$ (resp. $M^S$) is the martingale part of $X$ (resp. $S$).

From now on, in this section, the assumption below will be in force.
\begin{assumption}\
\label{A_PAI}
\begin{enumerate}[label=\arabic*)]
\item $\rho^S$ is strictly increasing.
\item $(0,2) \in D$.
\end{enumerate}
\end{assumption}
Notice that  item 2) is equivalent to the existence of the second order moment of $S_T$. 
Moreover, 2) implies,  by Cauchy-Schwarz, that $D/2 + (0,1) \subset D$.

We recall that previous assumption implies that $Z^2$ has no deterministic increments,
see \cite[Lemma 3.9]{gor2013variance}.

Similarly as in \cite[Propositions 3.4 and 3.15]{gor2013variance}, one can prove the following result.
\begin{proposition}
\label{propAdd}\
\begin{enumerate}[label=\arabic*)]
\item \label{propAddPt1} For every $(z_1,z_2)\in D$, $\left(X_t^{z_1}S_t^{z_2}e^{-\kappa_t(z_1,z_2)}\right)$ is a martingale. 
\item \label{propAddPt2} $t\mapsto \kappa_t(z_1,z_2)$ is a bounded variation continuous function, for every $(z_1,z_2)\in D$.
In particular, $t \mapsto \rho_t(z_1,z_2)$ is also a  bounded variation continuous function, 
for every $(z_1,z_2)\in D/2$.
\item \label{propAddPt3} Let $I$ be a compact real set included in $D$. Then 
$$\sup_{(x,y)\in I}\sup_{t\leq T} \E{X_t^x S_t^y}= \sup_{(x,y)\in I}\sup_{t\leq T} e^{\kappa_t(x,y)}< \infty.$$
\item \label{propAddPt4} $\forall (z_1,z_2)\in D/2$, $t\mapsto \rho_{t}(z_1,z_2)$ is non-decreasing. 
\item \label{propAddPt5} $ \kappa_{dt}(z_1,z_2) \ll \rho^S_{dt}  \text{ , for every } z\in D$.
\item \label{propAddPt6} $ \rho_{dt}(z_1,z_2,y_1,y_2) \ll \rho^S_{dt}  \text{ , for every } (z_1,z_2),(y_1,y_2)\in D/2$.
\end{enumerate}
\end{proposition}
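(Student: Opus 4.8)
The plan is to exploit the independent-increment structure of $(Z^1,Z^2)$ together with the classical representation of a stochastically continuous semimartingale with independent increments, adapting to dimension two the one-dimensional arguments of \cite{gor2013variance}. I would first settle \ref{propAddPt1}. Fix $(z_1,z_2)\in D$ and $0\le u\le t\le T$. Writing $Z_t=Z_u+(Z_t-Z_u)$ and using that the increment $Z_t-Z_u$ is independent of $\cF_u$, one gets $\E{X_t^{z_1}S_t^{z_2}\mid\cF_u}=X_u^{z_1}S_u^{z_2}\,\E{\exp(z_1(Z^1_t-Z^1_u)+z_2(Z^2_t-Z^2_u))}$. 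The same independence applied to $\exp(\kappa_t(z))=\E{\exp(z\cdot Z_t)}$ yields $\exp(\kappa_t(z)-\kappa_u(z))=\E{\exp(z\cdot(Z_t-Z_u))}$, so the conditional expectation equals $X_u^{z_1}S_u^{z_2}e^{\kappa_t(z)-\kappa_u(z)}$, which is exactly the martingale property for $X_t^{z_1}S_t^{z_2}e^{-\kappa_t(z)}$. Integrability at each $t\le T$ (a priori known only at $T$ through the definition of $D$) comes from the factorisation with real exponents $x={\rm Re}(z_1)$, $y={\rm Re}(z_2)$: since $\E{X_T^xS_T^y}=\E{X_t^xS_t^y}\,\E{\exp(x(Z^1_T-Z^1_t)+y(Z^2_T-Z^2_t))}$ and the increment factor is the expectation of a strictly positive variable, finiteness of the left-hand side forces $\E{X_t^xS_t^y}<\infty$; as $|X_t^{z_1}S_t^{z_2}|=X_t^xS_t^y$, the complex process is integrable for all $t$.

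For \ref{propAddPt2} I would invoke the deterministic characteristics $(B,C,\nu)$ of the additive semimartingale $(Z^1,Z^2)$, which are respectively of bounded variation, increasing, and a deterministic jump measure; relative to a continuous increasing operational time $G$ one obtains $\kappa_t(z)=\int_0^t(\langle b_s,z\rangle+\frac12\langle z,c_sz\rangle+\int(e^{\langle z,x\rangle}-1-\langle z,x\rangle\1_{|x|\le1})\nu_s(dx))\,dG_s$, each term being continuous and of bounded variation in $t$, while stochastic continuity of $(Z^1,Z^2)$ makes $G$, hence $\kappa_\cdot(z)$, continuous. Since $\rho_t(z_1,z_2)$, $\rho_t(z_1,z_2,y_1,y_2)$ and $\rho^S_t$ are finite linear combinations of values of $\kappa$, they inherit continuity and bounded variation. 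For \ref{propAddPt3} I would use that $(x,y)\mapsto\kappa_t(x,y)=\log\E{X_t^xS_t^y}$ is convex on $D\cap\R^2$ by Hölder's inequality, so on the compact $I$ its value is dominated by the values at finitely many points of $D$; for each such point, continuity in $t$ from \ref{propAddPt2} gives $\sup_{t\le T}e^{\kappa_t}<\infty$ on $[0,T]$, and combining yields the uniform bound. For \ref{propAddPt4}, setting $U=\exp(z_1(Z^1_t-Z^1_s)+z_2(Z^2_t-Z^2_s))$ and using $z+\bar z=2{\rm Re}(z)$ together with $\kappa_t(\bar z)=\overline{\kappa_t(z)}$, the increment is $\rho_t(z)-\rho_s(z)=\log(\E{|U|^2}/|\E{U}|^2)\ge 0$ by Cauchy--Schwarz, so $t\mapsto\rho_t(z)$ is non-decreasing.

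The heart of the matter is \ref{propAddPt5}. Using the representation above I would write $\kappa_{dt}(z)=\psi_t(z)\,dG_t$ and, after the linear cancellations in $\kappa_t(0,2)-2\kappa_t(0,1)$, $\rho^S_{dt}=\psi^S_t\,dG_t$ with $\psi^S_t=c^{22}_t+\int(e^{x_2}-1)^2\nu_t(dx)\ge 0$. Establishing $\kappa_{dt}(z)\ll\rho^S_{dt}$ then reduces to showing that the complex density $\psi_t(z)$ does not charge the $dG$-sets on which $\psi^S_t$ vanishes, the control of the jump contribution following from elementary bounds such as $|e^{\langle z,x\rangle}-1-\langle z,x\rangle\1_{|x|\le1}|\le C(z)\big((e^{x_2}-1)^2+\dots\big)$ and Cauchy--Schwarz. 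I expect the \emph{main obstacle} to be precisely the control of the drift and the $X$-directional contributions by $\psi^S$, which only records the $S$-activity; this is where the strict monotonicity of $\rho^S$ from Assumption \ref{A_PAI} must be used to rule out the degenerate sets. Finally \ref{propAddPt6} follows as a corollary: since $\rho_t(z_1,z_2,y_1,y_2)=\kappa_t(z+y)-\kappa_t(z)-\kappa_t(y)$ and, by Remark \ref{R23}, $z+y,z,y\in D$ whenever $z,y\in D/2$, the measure $\rho_{dt}(z,y)$ is a finite linear combination of measures each absolutely continuous with respect to $\rho^S_{dt}$, hence $\rho_{dt}(z,y)\ll\rho^S_{dt}$.
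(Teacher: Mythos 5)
The paper itself gives no written proof of this proposition: it simply defers to the one-dimensional results \cite[Propositions 3.4 and 3.15]{gor2013variance}, so your attempt to actually carry out the two-dimensional adaptation is welcome, and your treatment of items \ref{propAddPt1}, \ref{propAddPt2}, \ref{propAddPt4} and \ref{propAddPt6} is essentially sound: the independent-increment computation (including the factorisation argument giving integrability at intermediate times), the representation of $\kappa$ through the deterministic characteristics and an operational time $G$, the Cauchy--Schwarz monotonicity argument, and the reduction of \ref{propAddPt6} to \ref{propAddPt5} via Remark \ref{R23} are all correct in outline. The proposal fails, however, exactly at item \ref{propAddPt5}, which is the heart of the proposition and of everything built on it later (Proposition \ref{PSC}, Assumption \ref{setD}). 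You reduce $\kappa_{dt}(z_1,z_2)\ll\rho^S_{dt}$ to showing that the $dG$-density of $\kappa_{dt}(z_1,z_2)$ vanishes on the set where $\psi^S_t=c^{22}_t+\int(e^{x_2}-1)^2\nu_t(dx)$ vanishes, and then assert that ``the strict monotonicity of $\rho^S$ must be used to rule out the degenerate sets''. That sentence is where the proof should be, and the step cannot be completed as suggested.

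Here is why. Strict monotonicity of $\rho^S_t=\int_0^t\psi^S_u\,dG_u$ only says that every interval meets $\{\psi^S>0\}$ in a set of positive $dG$-measure; it does not make $\{\psi^S=0\}$ $dG$-null, and it puts no constraint whatsoever on the drift of $Z^2$ or on the $Z^1$-characteristics on that set. Concretely, take $Z^1\equiv 0$, $dG_t=dt$, $A\subset[0,T]$ a closed nowhere dense set of positive Lebesgue measure, $c^{22}_t=\1_{A^c}(t)$, $b^2_t=\1_{A}(t)$, $\nu=0$. Then $(0,2)\in D$ and $\rho^S_{dt}=\1_{A^c}(t)\,dt$ is strictly increasing (every interval meets the open dense set $A^c$ in a set of positive measure), so Assumption \ref{A_PAI} holds in full; yet $\kappa_{dt}(0,1)=\left(\1_{A}(t)+\frac{1}{2}\1_{A^c}(t)\right)dt$ charges $A$ while $\rho^S_{dt}(A)=0$. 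Hence no argument can deduce item \ref{propAddPt5} from Assumption \ref{A_PAI} plus strict monotonicity alone, which is all your plan invokes; the domination of the drift $b^2$, of the $Z^1$-terms $b^1$, $c^{11}$, and of jumps with $x_2=0$, $x_1\neq 0$ requires structural input that $\psi^S$ (which records only the quadratic $S$-activity) cannot provide --- note in particular that your proposed jump bound, whose right-hand side is built from $(e^{x_2}-1)^2$, vanishes identically on jumps with $x_2=0$ and so cannot control them. A second, smaller gap is in item \ref{propAddPt3}: a compact $I\subset D$ need not be contained in the convex hull of finitely many points of $D$ --- in $\R^2$, unlike in $\R$, $I$ may consist entirely of boundary points of the convex set $D\cap\R^2$ --- so the ``dominated by the values at finitely many points'' step is unjustified and a different argument (e.g.\ one exploiting the specific structure of the Laplace transform, not bare convexity) is needed there.
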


\begin{remark} \label{R13}
Notice that, for any $(z_1,z_2) \in D$, $X^{z_1} S^{z_2}$ is a special semimartingale. Indeed, by Proposition \ref{propAdd}, $X_t^{z_1} S_t^{z_2} = N_t e^{\kappa_t(z_1,z_2)}$ where 
$\kappa(z_1,z_2)$ is a bounded variation continuous function and $N$ is a martingale. Hence, integration by parts implies that $X^{z_1} S^{z_2}$ is a special semimartingale whose decomposition is given by
\begin{equation} \label{ER13}
X^{z_1} S^{z_2}= M(z_1,z_2) + V(z_1,z_2),
\end{equation}
 where $M_t(z_1,z_2)= X_0^{z_1} S_0^{z_2} + \int_0^t e^{\kappa_u(z_1,z_2)} dN_u$ and $V_t(z_1,z_2)= \int_0^t X^{z_1}_{u-} S^{z_2}_{u-} \kappa_{du}(z_1,z_2)$, $\forall t\in[0,T]$.
\end{remark}

The  proposition below shows that the local martingale part of the decomposition above is
a square integrable martingale if $(z_1,z_2) \in D/2$ and gives its angle bracket in terms of the generating function.
\begin{proposition}
\label{ExpAddSqMg}
Let $z=(z_1,z_2), y=(y_1,y_2) \in D/2$.  Then $X^{z_1} S^{z_2}$ is a special semimartingale, whose decomposition $X^{z_1} S^{z_2}= M(z_1,z_2) + V(z_1,z_2)$ satisfies, for $t\in[0,T]$,
\begin{eqnarray*}
V(z_1,z_2)_t &=& \int_0^t X^{z_1}_{u-} S^{z_2}_{u-} \kappa_{du}(z_1, z_2) \\
\langle M(z_1,z_2), M(y_1,y_2) \rangle_t &=& \int_0^t X^{z_1+y_1}_{u-} S^{z_2+y_2}_{u-} \rho_{du}(z_1, z_2,y_1, y_2).
\end{eqnarray*} In particular, 
\begin{equation*}
\langle M(z_1,z_2)\rangle_t := \langle M(z_1,z_2), \overline{M(z_1,z_2)} \rangle_t = \int_0^t X^{2{\rm Re}(z_1)}_{u-} S^{2{\rm Re}(z_2)}_{u-} \rho_{du}(z_1,z_2).
\end{equation*}
Moreover, $M(z_1,z_2)$ is a square integrable martingale.
\end{proposition}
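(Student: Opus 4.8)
The plan is to build directly on Remark \ref{R13}, which already provides, for any $(z_1,z_2)\in D$, the special semimartingale decomposition $X^{z_1}S^{z_2}=M(z_1,z_2)+V(z_1,z_2)$ with $M(z_1,z_2)$ a local martingale and $V(z_1,z_2)_t=\int_0^t X^{z_1}_{u-}S^{z_2}_{u-}\kappa_{du}(z_1,z_2)$; this settles the decomposition and the formula for $V$. Since $z=(z_1,z_2)$ and $y=(y_1,y_2)$ lie in $D/2$, Remark \ref{R23} gives $z+y\in D$, so that the product $X^{z_1+y_1}S^{z_2+y_2}=X^{z_1}S^{z_2}\cdot X^{y_1}S^{y_2}$ is itself a special semimartingale, with bounded variation part $\int_0^\cdot X^{z_1+y_1}_{u-}S^{z_2+y_2}_{u-}\kappa_{du}(z_1+y_1,z_2+y_2)$.

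The core step identifies the angle bracket by comparing two decompositions of this product. Writing $P=X^{z_1}S^{z_2}$ and $Q=X^{y_1}S^{y_2}$, the integration by parts formula reads $PQ=P_0Q_0+\int_0^\cdot P_{u-}\,dQ_u+\int_0^\cdot Q_{u-}\,dP_u+[P,Q]$. Because $V(z_1,z_2)$ and $V(y_1,y_2)$ are continuous of bounded variation, every square bracket involving them vanishes, so $[P,Q]=[M(z_1,z_2),M(y_1,y_2)]$; moreover each of $\int_0^\cdot P_{u-}\,dQ_u$ and $\int_0^\cdot Q_{u-}\,dP_u$ splits into a local martingale plus the bounded variation terms $\int_0^\cdot (PQ)_{u-}\kappa_{du}(y_1,y_2)$ and $\int_0^\cdot (PQ)_{u-}\kappa_{du}(z_1,z_2)$. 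Subtracting these from the canonical decomposition of $PQ$ and using $\rho_t(z_1,z_2,y_1,y_2)=\kappa_t(z_1+y_1,z_2+y_2)-\kappa_t(z_1,z_2)-\kappa_t(y_1,y_2)$, I get that $[M(z_1,z_2),M(y_1,y_2)]-\int_0^\cdot X^{z_1+y_1}_{u-}S^{z_2+y_2}_{u-}\rho_{du}(z_1,z_2,y_1,y_2)$ is a local martingale. The subtracted process is continuous of bounded variation, hence predictable (Proposition \ref{propAdd}, item \ref{propAddPt2}), so exactly as in Lemma \ref{LC} the uniqueness of the canonical decomposition of the special semimartingale $[M(z_1,z_2),M(y_1,y_2)]$ forces it to equal $\langle M(z_1,z_2),M(y_1,y_2)\rangle$, which is the claimed formula. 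For the particular case I take $y=(\bar z_1,\bar z_2)$: since $X,S$ are positive, conjugating the decomposition of $X^{z_1}S^{z_2}$ together with $\kappa_t(\bar z_1,\bar z_2)=\overline{\kappa_t(z_1,z_2)}$ yields $\overline{M(z_1,z_2)}=M(\bar z_1,\bar z_2)$, and then $z_1+\bar z_1=2\,{\rm Re}(z_1)$, $z_2+\bar z_2=2\,{\rm Re}(z_2)$ and $\rho_t(z_1,z_2,\bar z_1,\bar z_2)=\rho_t(z_1,z_2)$ give the stated expression for $\langle M(z_1,z_2)\rangle$.

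It remains to upgrade $M(z_1,z_2)$ from local to square integrable martingale. The route is to show $\E{\langle M(z_1,z_2)\rangle_T}<\infty$, from which square integrability of the real and imaginary parts, and hence of $M(z_1,z_2)$, follows from the standard identity $\E{|N_T|^2}=|N_0|^2+\E{\langle N\rangle_T}$ for real local martingales. Using the formula just obtained, the fact that $\rho(z_1,z_2)$ is deterministic, continuous and non-decreasing (Proposition \ref{propAdd}, items \ref{propAddPt2} and \ref{propAddPt4}), and Tonelli, I bound $\E{\langle M(z_1,z_2)\rangle_T}$ by $\int_0^T \E{X^{2{\rm Re}(z_1)}_{u-}S^{2{\rm Re}(z_2)}_{u-}}\rho_{du}(z_1,z_2)\le \sup_{u\le T}\E{X^{2{\rm Re}(z_1)}_{u}S^{2{\rm Re}(z_2)}_{u}}\,\rho_T(z_1,z_2)$, the left-limit expectations being controlled by the supremum through Fatou. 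Since $z\in D/2$ forces $(2\,{\rm Re}(z_1),2\,{\rm Re}(z_2))\in D\cap\R^2$, Proposition \ref{propAdd}, item \ref{propAddPt3}, makes this supremum finite, and the conclusion follows.

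The main obstacle I anticipate is the bookkeeping in the integration-by-parts step: one must verify carefully that all mixed brackets with the continuous bounded variation parts vanish, and that the stochastic integrals of the predictable (indeed deterministic-coefficient) locally bounded integrands against the local martingales are genuine local martingales, so that the uniqueness of the canonical decomposition can be invoked cleanly to isolate $\langle M(z_1,z_2),M(y_1,y_2)\rangle$. The integrability estimate that yields square integrability is then routine once $2z\in D$ is exploited.
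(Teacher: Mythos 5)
Your proof is correct, but it follows a genuinely different route from the paper, for the simple reason that the paper gives no self-contained argument at all: its proof of Proposition \ref{ExpAddSqMg} is a one-line pointer to the techniques of \cite[Lemma 3.2]{Hubalek2006} and their extension in \cite{gor2013variance}. Your proposal in effect carries out that adaptation using only tools internal to the paper: Remark \ref{R13} for the canonical decomposition and the formula for $V(z_1,z_2)$; Remark \ref{R23} to place $z+y$ in $D$ so that Remark \ref{R13} applies to the product $X^{z_1+y_1}S^{z_2+y_2}$; integration by parts combined with uniqueness of the canonical decomposition of the special semimartingale $[M(z_1,z_2),M(y_1,y_2)]$ --- exactly the pattern of Lemma \ref{LC} --- to identify the angle bracket; and item \ref{propAddPt3} of Proposition \ref{propAdd} to bound $\E{\langle M(z_1,z_2)\rangle_T}$. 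Both routes ultimately rest on item \ref{propAddPt1} of Proposition \ref{propAdd} (the martingale property of $X_t^{z_1}S_t^{z_2}e^{-\kappa_t(z_1,z_2)}$); what the paper's citation buys is brevity, while your argument buys a verifiable proof that makes explicit where continuity of $\kappa$, local boundedness of left-limit processes, and the moment bound actually enter.

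Three details are worth making explicit when writing this up, though none is a genuine gap. First, the existence of $\langle M(z_1,z_2),M(y_1,y_2)\rangle$ is not presupposed: it is precisely what your special-semimartingale argument delivers, since a finite variation process expressible as a local martingale plus a predictable continuous bounded variation process admits that process as its compensator. Second, in the square-integrability step the real and imaginary parts $A,B$ of $M(z_1,z_2)$ must first be seen to be locally square integrable; this follows because $[A]+[B]=[M(z_1,z_2),\overline{M(z_1,z_2)}]$ has locally integrable variation by the same compensator argument, and then a localization plus uniform-integrability argument (rather than the identity $\E{|N_T|^2}=|N_0|^2+\E{\langle N\rangle_T}$ itself, which presupposes the conclusion) upgrades $A$ and $B$ to square integrable martingales, using also that $M_0(z_1,z_2)=1$ since $X_0=S_0=1$. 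Third, the identity $\overline{M(z_1,z_2)}=M(\bar z_1,\bar z_2)$ behind the ``in particular'' statement requires $\kappa_t(\bar z_1,\bar z_2)=\overline{\kappa_t(z_1,z_2)}$, which holds because $X,S>0$ gives $\E{X_t^{\bar z_1}S_t^{\bar z_2}}=\overline{\E{X_t^{z_1}S_t^{z_2}}}$, and both sides of the identity are continuous in $t$ and vanish at $t=0$, pinning down the continuous logarithm.
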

\begin{proof}
This can be done adapting the techniques of \cite[Lemma 3.2]{Hubalek2006} 
and its generalization to one-dimensional additive processes, i.e. \cite[Proposition 3.17 and Lemma 13.19]{gor2013variance}.
\end{proof}

The measure $d \rho^S$, called \textbf{reference variance measure} in \cite{gor2013variance}, plays a central role in the expression of the canonical
 decomposition of special semimartingales depending on the couple $(X,S)$.
\begin{corollary}
\label{CCS}
The semimartingale decomposition of $S$ is given by
 $S=M^S+V^S $, where, for $t\in[0,T]$
$$ V^S_t = \int_0^t S_{u-} \kappa_{du}(0,1) \quad
\langle M^S \rangle_t = \int_0^t S_{u-}^2 \rho^S_{du}.
$$
\end{corollary}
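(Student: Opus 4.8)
The plan is to obtain Corollary \ref{CCS} as a direct specialization of Proposition \ref{ExpAddSqMg} to the single choice $(z_1,z_2) = (0,1)$, exploiting the identity $S = X^0 S^1 = \exp(Z^2)$. The first step is to verify that $(0,1)$ is an admissible parameter, i.e. that $(0,1) \in D/2$. By the definition of $D/2$ this means $(0,2) \in D$, which is precisely item 2) of Assumption \ref{A_PAI}, in force throughout this section. Hence Proposition \ref{ExpAddSqMg} applies with $z = (0,1)$ and yields the special semimartingale decomposition $S = M(0,1) + V(0,1)$; I would then set $M^S := M(0,1)$ and $V^S := V(0,1)$, noting that $M^S$ is in particular a square integrable martingale by the same proposition.

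Next I would simply read off the two formulae. For the bounded variation part, the proposition gives $V(0,1)_t = \int_0^t X^{0}_{u-} S^{1}_{u-} \kappa_{du}(0,1)$, and since $X^{0} \equiv 1$ this is exactly $V^S_t = \int_0^t S_{u-}\,\kappa_{du}(0,1)$, as claimed. For the angle bracket I would invoke the ``in particular'' identity of Proposition \ref{ExpAddSqMg}, which with $z_1 = 0$, $z_2 = 1$ (so that $2\,{\rm Re}(z_1) = 0$ and $2\,{\rm Re}(z_2) = 2$) reads $\langle M(0,1)\rangle_t = \int_0^t X^{0}_{u-} S^{2}_{u-}\,\rho_{du}(0,1) = \int_0^t S^{2}_{u-}\,\rho_{du}(0,1)$. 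The final identification uses the definition \eqref{ExpAddRhoS}, where $\rho^S_t = \rho_t(0,1)$, giving $\langle M^S\rangle_t = \int_0^t S^{2}_{u-}\,\rho^S_{du}$.

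I do not anticipate any genuine obstacle here: the corollary is a literal substitution $(z_1,z_2) = (0,1)$ into results already established, and the only nontrivial point, the admissibility $(0,1) \in D/2$, is granted immediately by the standing Assumption \ref{A_PAI}. The one matter worth stating explicitly in the write-up is the reduction $X^0 \equiv 1$, which turns the mixed expressions $X^{z_1}S^{z_2}$ of the general proposition into the pure powers of $S$ appearing in the corollary.
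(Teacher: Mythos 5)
Your proposal is correct and is essentially identical to the paper's own proof, which simply states that the corollary follows from Proposition \ref{ExpAddSqMg} with $z_1=0$, $z_2=1$; you have merely filled in the routine details (admissibility of $(0,1)$ via Assumption \ref{A_PAI}, the reduction $X^0\equiv 1$, and the identification $\rho^S = \rho(0,1)$).
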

\begin{proof}
It follows from Proposition \ref{ExpAddSqMg} setting $z_1 = 0, z_2=1$.
\end{proof}

Now we state some  useful estimates.
\begin{lemma}
\label{expAddMoments}
Let $(a,b)\in D \cap \R^2$. Then $\E{\sup_{t\leq T} X_t^a S_t^b} < \infty.$
\end{lemma}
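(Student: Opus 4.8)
The plan is to reduce the two-dimensional estimate to a statement about the exponential of a single real additive process and then to control its running maximum by a maximal inequality exploiting the independence of increments. I would set $L_t := a Z^1_t + b Z^2_t$, so that $X_t^a S_t^b = e^{L_t}$. Since $(Z^1,Z^2)$ is an additive process, $L$ is a real-valued c\`adl\`ag process with independent increments, continuous in probability; moreover $(a,b)\in D\cap\R^2$ means precisely that $\E{e^{L_T}} = \E{X_T^a S_T^b} <\infty$. The goal is therefore $\E{e^{M^*}}<\infty$, where $M^* := \sup_{t\le T} L_t \ge L_0 = 0$. It is worth noting first why the obvious martingale route is insufficient: by Proposition \ref{propAdd}, item \ref{propAddPt1}, $N_t := e^{L_t - \kappa_t(a,b)}$ is a positive martingale, but controlling $\E{\sup_t N_t}$ through Doob's $L^2$ inequality would require $N_T\in L^2$, i.e. $\E{X_T^{2a}S_T^{2b}}<\infty$, hence $(a,b)\in D/2$; since we only assume $(a,b)\in D$, a genuinely different argument is needed.

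The key step is an Ottaviani--Skorokhod type maximal inequality for the independent-increment process $L$. For $\lambda\in\R$ and $\mu>0$, I would introduce the first-passage time $\tau := \inf\{t\le T : L_t \ge \lambda\}$; on $\{\tau \le T\}$ one has $L_T \ge \lambda - \mu$ as soon as $L_T - L_\tau \ge -\mu$, and since $L$ has independent increments the increment $L_T - L_\tau$ is independent of $\cF_\tau$. Integrating over the value of $\tau$ yields
\[ \P\bigl(\sup_{t\le T} L_t \ge \lambda\bigr)\, c(\mu) \le \P(L_T \ge \lambda-\mu), \qquad c(\mu):=\inf_{t\le T}\P(L_T-L_t\ge-\mu). \]
Because $L$ is c\`adl\`ag on the compact interval $[0,T]$, the variable $R:=\sup_{t\le T}|L_t|$ is a.s. finite, and since $|L_T - L_t|\le 2R$ one gets $c(\mu)\ge \P(R\le \mu/2)\to 1$ as $\mu\to\infty$. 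I would then fix $\mu_0$ with $c(\mu_0)\ge \tfrac12$, so that $\P(\sup_{t\le T}L_t \ge \lambda)\le 2\,\P(L_T \ge \lambda-\mu_0)$ for all $\lambda$.

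It remains to integrate. Using the layer-cake formula together with $M^*\ge 0$,
\[ \E{e^{M^*}} = 1 + \int_0^\infty e^{\lambda}\,\P(M^*\ge\lambda)\,d\lambda \le 1 + \int_0^{\mu_0} e^{\lambda}\,d\lambda + 2\int_{\mu_0}^{\infty} e^{\lambda}\,\P(L_T\ge\lambda-\mu_0)\,d\lambda. \]
The last integral equals $2e^{\mu_0}\int_0^\infty e^{\lambda'}\P(L_T\ge\lambda')\,d\lambda' \le 2e^{\mu_0}\,(1+\E{e^{L_T}})$, which is finite by assumption; hence $\E{e^{M^*}} = \E{\sup_{t\le T} X_t^a S_t^b}<\infty$. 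The main obstacle is the rigorous justification of the maximal inequality in continuous time: one must handle the stopping time $\tau$ together with the independence of $L_T-L_\tau$ from $\cF_\tau$ for an additive process (for instance by first establishing the inequality along a nested sequence of finite partitions, where the elementary independence argument applies verbatim, and then passing to the limit by right-continuity), and verify the tightness giving a usable $\mu_0$. Everything else reduces to the elementary layer-cake estimate above. This is the two-dimensional analogue of the moment estimates obtained in \cite{gor2013variance} and \cite{Hubalek2006} for one-dimensional exponentials of additive processes.
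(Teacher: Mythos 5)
Your proof is correct, but it takes a genuinely different route from the paper's. The paper exploits the elementary observation that $(a,b)\in D$ implies $(a/2,b/2)\in D/2$: writing $\sup_{t\le T} X_t^a S_t^b=\sup_{t\le T}\vert X_t^{a/2}S_t^{b/2}\vert^2$, it invokes Proposition \ref{ExpAddSqMg} to decompose $X^{a/2}S^{b/2}=M(a/2,b/2)+V(a/2,b/2)$ with $M(a/2,b/2)$ a \emph{square integrable} martingale, bounds $\E{\sup_{t\le T}\vert M_t(a/2,b/2)\vert^2}$ by Doob's $L^2$ inequality, and bounds the bounded variation part by Cauchy--Schwarz against the deterministic measure $\kappa_{du}(a/2,b/2)$. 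In particular, your claim that ``a genuinely different argument is needed'' because Doob would require $(a,b)\in D/2$ is overstated: the square-root trick rescues precisely the Doob route (one could even apply Doob directly to the positive martingale $e^{L_t/2-\kappa_t(a/2,b/2)}$, since $t\mapsto\kappa_t(a/2,b/2)$ is continuous with bounded variation, hence bounded on $[0,T]$). Your alternative --- the Ottaviani--Skorokhod maximal inequality for the one-dimensional additive process $L=aZ^1+bZ^2$, followed by a layer-cake integration --- is nonetheless sound: the discrete-partition argument you sketch is the right rigorous route (full independence of $L_T-L_\tau$ from $\cF_\tau$ at a stopping time is delicate for a non-homogeneous additive process, but your constant $c(\mu)=\inf_{t\le T}\P(L_T-L_t\ge-\mu)$ only requires independence of increments at deterministic times, which is exactly what the partitions use), and the tightness bound $c(\mu)\ge\P\bigl(\sup_{t\le T}\vert L_t\vert\le\mu/2\bigr)\to1$ is correct. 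As for what each approach buys: the paper's proof is shorter because Proposition \ref{ExpAddSqMg} is already in place and it stays inside the semimartingale framework of Section \ref{expleExpAdd}; yours is more self-contained and in a sense more general, requiring only independence of increments and $\E{e^{L_T}}<\infty$ rather than any second-moment or semimartingale structure, at the price of justifying the continuous-time maximal inequality.
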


\begin{proof}
Let $(a,b)\in D \cap \R^2$, then $(a/2,b/2)\in D/2$. 
By Proposition \ref{ExpAddSqMg}, we have $$X_t^{a/2} S_t^{b/2}= M_t(a/2,b/2) + \int_0^t X^{a/2}_{u-} S^{b/2}_{u-} \kappa_{du}(a/2,b/2), \; \forall t \in [0,T]$$ 
and  $M(a/2,b/2)$ is a square integrable martingale. Hence, by Doob inequality, we have
$$
\E{\sup_{t\leq T} \left| M_t(a/2,b/2)\right|^2 } \leq 4 \E{\left| M_T(a/2,b/2)\right|^2 }<\infty.
$$
On the other hand, using Cauchy-Schwarz inequality and Fubini theorem, we obtain
\begin{eqnarray*}
\E{\sup_{t\leq T} \left| \int_0^t X^{a/2}_{u-} S^{b/2}_{u-} \kappa_{du}(a/2, b/2)\right|^2} &\leq &  \norm{\kappa(a/2, b/2)}_{T} \int_0^T \E{X^{a}_{u-} S^{b}_{u-}} \norm{\kappa(a/2, b/2)}_{du} \\
&= & \norm{\kappa(a/2, b/2)}_{T} \int_0^T e^{\kappa_u(a, b)} \norm{\kappa(a/2, b/2)}_{du}\\
&\leq & e^{\norm{\kappa(a, b)}_T} \norm{\kappa(a/2, b/2)}_{T}^2 < \infty.
\end{eqnarray*}
Finally  
$ \E{\sup_{t\leq T} X_t^{a} S_t^{b} } = \E{\sup_{t\leq T} \left| X_t^{a/2} S_t^{b/2} \right|^2}< \infty.$
\end{proof}

In the general case, when $(z_1,z_2) \in D$, the local martingale part of the special semimartingale $X^{z_1} S^{z_2}$ is a true  (not necessarily square integrable) martingale.
\begin{proposition}
\label{ExpAddMg}
Let $(z_1,z_2) \in D$, then, $M(z_1,z_2)$, the local martingale part of $X^{z_1} S^{z_2}$, is a true martingale such that $
\E{\sup_{t\leq T} \left| M_t(z_1,z_2)\right| } < \infty .$
\end{proposition}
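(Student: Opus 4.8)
The goal is to show that for $(z_1,z_2) \in D$, the local martingale part $M(z_1,z_2)$ of the special semimartingale $X^{z_1}S^{z_2}$ is a genuine martingale satisfying $\mathbb{E}[\sup_{t\le T}|M_t(z_1,z_2)|] < \infty$. The key observation is that we already have, from Remark \ref{R13}, the explicit decomposition $X^{z_1}S^{z_2} = M(z_1,z_2) + V(z_1,z_2)$ with $V_t(z_1,z_2) = \int_0^t X^{z_1}_{u-}S^{z_2}_{u-}\kappa_{du}(z_1,z_2)$. So $M(z_1,z_2) = X^{z_1}S^{z_2} - V(z_1,z_2)$, and it suffices to control the supremum in $L^1$ of each of the two terms on the right, from which both the $L^1$-bound on the supremum and the true martingale property will follow.

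Let me break things down. Let me think about what I can prove.

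**The plan.** Let me describe the approach. The natural strategy is to dominate $\sup_{t\le T}|M_t(z_1,z_2)|$ by integrable quantities built from the real parts $a := 2\,\mathrm{Re}(z_1)$, $b := 2\,\mathrm{Re}(z_2)$.

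First, for the term $X^{z_1}S^{z_2}$: since $|X_t^{z_1}S_t^{z_2}| = X_t^{\mathrm{Re}(z_1)}S_t^{\mathrm{Re}(z_2)}$ (as $X,S$ are positive, being exponentials), and $(\mathrm{Re}(z_1),\mathrm{Re}(z_2)) \in D\cap\mathbb{R}^2$ by definition of $D$, Lemma \ref{expAddMoments} gives directly $\mathbb{E}[\sup_{t\le T}X_t^{\mathrm{Re}(z_1)}S_t^{\mathrm{Re}(z_2)}] < \infty$, hence $\mathbb{E}[\sup_{t\le T}|X_t^{z_1}S_t^{z_2}|] < \infty$.

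Let me now think about the $V$ term.

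**The bounded-variation part.** For $V(z_1,z_2)$, I estimate its total variation: $\sup_{t\le T}|V_t(z_1,z_2)| \le \int_0^T |X^{z_1}_{u-}S^{z_2}_{u-}|\,\|\kappa(z_1,z_2)\|_{du} \le (\sup_{t\le T}X_t^{\mathrm{Re}(z_1)}S_t^{\mathrm{Re}(z_2)})\cdot\|\kappa(z_1,z_2)\|_T$, using that $\kappa(z_1,z_2)$ is of bounded variation (Proposition \ref{propAdd}, item \ref{propAddPt2}), so $\|\kappa(z_1,z_2)\|_T < \infty$. Taking expectations and again invoking Lemma \ref{expAddMoments} yields $\mathbb{E}[\sup_{t\le T}|V_t(z_1,z_2)|] < \infty$. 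Combining the two bounds via $|M_t| \le |X_t^{z_1}S_t^{z_2}| + |V_t(z_1,z_2)|$ delivers the claimed $L^1$-bound on the supremum.

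**From the bound to the true martingale property.** Here is where I expect the only genuine subtlety. We know from Remark \ref{R13} that $M(z_1,z_2)$ is a local martingale. A local martingale $M$ for which $\mathbb{E}[\sup_{t\le T}|M_t|] < \infty$ — that is, whose running supremum is integrable — is of class (D) and hence a true (uniformly integrable) martingale: if $(\tau_n)$ is a localizing sequence of stopping times with $\tau_n \uparrow T$, then $M_{t\wedge\tau_n}$ is a martingale for each $n$, and $|M_{t\wedge\tau_n}| \le \sup_{s\le T}|M_s| \in L^1$ provides the dominating function needed to pass to the limit by dominated convergence in the defining martingale identity $\mathbb{E}[M_{t\wedge\tau_n}\mid\mathcal{F}_s] = M_{s\wedge\tau_n}$. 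This upgrades $M(z_1,z_2)$ to a true martingale.

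**Main obstacle.** None of the individual steps is hard, since Lemma \ref{expAddMoments} and Proposition \ref{propAdd} do the heavy lifting. The only point requiring care is the passage from the local martingale property plus the integrable-supremum bound to the true martingale property; this is the standard localization-plus-dominated-convergence argument, and it is the step I would state explicitly rather than leave implicit, since the whole content of the proposition beyond the estimate is precisely this upgrade.
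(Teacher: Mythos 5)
Your proposal is correct and follows essentially the same route as the paper: decompose $M(z_1,z_2)=X^{z_1}S^{z_2}-V(z_1,z_2)$, bound the supremum of each term using $|X^{z_1}S^{z_2}|=X^{{\rm Re}(z_1)}S^{{\rm Re}(z_2)}$, the bounded variation of $\kappa(z_1,z_2)$, and Lemma \ref{expAddMoments}, then conclude the true martingale property from the integrable running supremum. The only difference is that you spell out the localization-plus-dominated-convergence step, which the paper leaves implicit in its final ``Consequently''.
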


\begin{proof}
Let $(z_1,z_2) \in D$. Adopting the notations of \eqref{ER13}, we recall that,
by Proposition \ref{ExpAddSqMg}, $\forall t\in [0,T]$, 
$M_t(z_1,z_2)= X_t^{z_1} S_t^{z_2} - \int_0^t X^{z_1}_{u-} S^{z_2}_{u-} \kappa_{du}(z_1,z_2)$.
For this local martingale we can write 
\begin{eqnarray*}
\E{\sup_{t\leq T} \left| M_t(z_1,z_2)\right| } &\leq & \E{\sup_{t\leq T} \left| X_t^{z_1} S_t^{z_2}\right|} + \E{\int_0^T \left|X^{z_1}_{t-} S^{z_2}_{t-}\right| \norm{\kappa(z_1,z_2)}_{dt}} \\
&\leq & \E{\sup_{t\leq T} \left| X_t^{{\rm Re}(z_1)} S_t^{{\rm Re}(z_2)}\right|} 
\left( 1 + \norm{\kappa(z_1,z_2)}_{T} \right).
\end{eqnarray*} 
Since $({\rm Re}(z_1),{\rm Re} (z_2))$ belongs to $D$, by
 Lemma \ref{expAddMoments}, the right-hand side is finite. 
Consequently the local martingale $M(z_1,z_2)$ is indeed a true martingale.
\end{proof}

The goal of this section is to show that $(X,S)$ is a solution of a strong martingale problem,
with related  triplet  $(\Da, \a, A)$, which will be specified below. 
For this purpose, we determine the semimartingale decomposition 
of $(f(t,X_t,S_t))$ for 
functions $f:[0,T]\times \cO \rightarrow \C$, where $\cO=]0,\infty[^2$, of the form
\begin{equation} \label{DFLaplace}
f(t,x,s) := \int_{\C^2} d \Pi(z_1, z_2) x^{z_1} s^{z_2} \lambda(t,z_1,z_2),\; \forall t\in[0,T], x,s> 0,
\end{equation}
where $\Pi$ is a finite complex Borel measure on $\C^2$ and $\lambda: [0,T]\times \C^2\longrightarrow \C$. 
The family of those functions will include the set $\Da$ defined later. 

 Proposition \ref{ExpAddMg} and item \ref{propAddPt5} of Proposition \ref{propAdd} say that, for $z=(z_1,z_2) \in D$,
$$
t \mapsto X_t^{z_1} S_t^{z_2} - \int_0^t X^{z_1}_{u-} S^{z_2}_{u-} \kappa_{du}(z_1, z_2)
= X_t^{z_1} S_t^{z_2} - \int_0^t X^{z_1}_{u-} S^{z_2}_{u-} \frac{d\kappa_{u}(z_1, z_2)}{d \rho^S_u} \rho^S_{du}$$ is a martingale. This provides the semimartingale decomposition of
 the basic functions 
$(t,x,s) \mapsto x^{z_1} s^{z_2}$ for $z_1,z_2 \in D$, applied to $(X,S)$.
Those functions are expected to be elements of
 $\Da$ and  one candidate for the bounded variation process $A$
 is $\rho^S$.
It remains to precisely define $\Da$ and the operator $\a$.

A first step in this direction is to  consider a Borel
 function $\lambda: [0,T]\times \C^2 \rightarrow \C$ such that, for any $(z_1,z_2)\in D$, $t\in[0,T] \mapsto \lambda(t, z_1, z_2)$ is absolutely continuous with respect to $\rho^S$.

\begin{lemma}
\label{ExpAddLemma}
Let $\lambda: [0,T]\times \C^2 \rightarrow \C$ such that, for any $(z_1,z_2)\in D$, $t\in[0,T] \mapsto \lambda(t, z_1, z_2)$ is absolutely continuous with respect to $\rho^S$.
Then for any $(z_1,z_2)\in D$, 
\begin{equation}
\label{MLambda}
t \mapsto M^\lambda_t(z_1,z_2) := S_t^{z_1} X_t^{z_2}\lambda(t,z_1,z_2) - \int_0^t X_{u-}^{z_1} S_{u-}^{z_2} \left\lbrace \dfrac{d\lambda(u,z_1,z_2)}{d\rho^S_u} + \lambda(u,z_1,z_2)\dfrac{d\kappa_u(z_1,z_2)}{d\rho^S_u} \right\rbrace \rho^S_{du},
\end{equation}
is a martingale. Moreover, if $(z_1,z_2)\in D/2$ then $M^\lambda(z_1,z_2)$ is a square integrable martingale and
\begin{equation}
\label{MLambda2Moment}
\E{|M^\lambda_t(z_1,z_2)|^2} = \int_0^t e^{\kappa_u(2{\rm Re}(z_1), 2{\rm Re}(z_2))} |\lambda(u,z_1,z_2)|^2 \rho_{du}(z_1,z_2).
\end{equation}
\end{lemma}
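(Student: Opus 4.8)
The plan is to express $M^\lambda(z_1,z_2)$ as a stochastic integral against the true martingale $M(z_1,z_2)$ of Remark \ref{R13}, with the deterministic, continuous, bounded-variation function $\lambda(\cdot,z_1,z_2)$ as integrand, and then read off both assertions from standard properties of such integrals.

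First I would observe that, since $t\mapsto\lambda(t,z_1,z_2)$ is absolutely continuous with respect to $\rho^S$ and $\rho^S$ is continuous (indeed strictly increasing, by Assumption \ref{A_PAI}), the map $\lambda(\cdot,z_1,z_2)$ is continuous and of bounded variation on $[0,T]$, hence bounded. Fix $(z_1,z_2)\in D$ and write $\lambda_u:=\lambda(u,z_1,z_2)$ and $M:=M(z_1,z_2)$, so that by Remark \ref{R13} and Proposition \ref{ExpAddMg} one has $X_t^{z_1}S_t^{z_2}=M_t+\int_0^t X_{u-}^{z_1}S_{u-}^{z_2}\kappa_{du}(z_1,z_2)$ with $M$ a true martingale satisfying $\E{\sup_{u\leq T}|M_u|}<\infty$. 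Because $\lambda(\cdot,z_1,z_2)$ is continuous and of finite variation, its covariation with $X^{z_1}S^{z_2}$ vanishes, so integration by parts gives
\begin{equation*}
\lambda_t X_t^{z_1}S_t^{z_2}=\lambda_0 X_0^{z_1}S_0^{z_2}+\int_0^t \lambda_u\,d\!\left(X_u^{z_1}S_u^{z_2}\right)+\int_0^t X_{u-}^{z_1}S_{u-}^{z_2}\,d\lambda_u.
\end{equation*}
Substituting the decomposition of $X^{z_1}S^{z_2}$ and replacing $\kappa_{du}(z_1,z_2)$ and $d\lambda_u$ by their densities with respect to $\rho^S$ (the former existing by item \ref{propAddPt5} of Proposition \ref{propAdd}, the latter by hypothesis) turns the previous identity into
\begin{equation*}
M^\lambda_t(z_1,z_2)=\lambda_0 X_0^{z_1}S_0^{z_2}+\int_0^t \lambda_u\,dM_u.
\end{equation*}

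To obtain the first assertion it then remains to check that the local martingale $\int_0^{\cdot}\lambda_u\,dM_u$ is a true martingale. A second integration by parts yields $\int_0^t \lambda_u\,dM_u=\lambda_t M_t-\lambda_0 M_0-\int_0^t M_{u-}\,d\lambda_u$, whence
\begin{equation*}
\sup_{t\leq T}\Big|\int_0^t \lambda_u\,dM_u\Big|\leq\big(\sup_{u\leq T}|\lambda_u|+|\lambda_0|+\norm{\lambda}_T\big)\sup_{u\leq T}|M_u|.
\end{equation*}
Since the right-hand side is integrable by Proposition \ref{ExpAddMg}, the integral is dominated by an integrable random variable, hence is a uniformly integrable martingale, and so is $M^\lambda(z_1,z_2)$. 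For the second assertion, take $(z_1,z_2)\in D/2$, so that $M=M(z_1,z_2)$ is square integrable with complex angle bracket $\langle M,\overline{M}\rangle_t=\int_0^t X_{u-}^{2{\rm Re}(z_1)}S_{u-}^{2{\rm Re}(z_2)}\rho_{du}(z_1,z_2)$ given by Proposition \ref{ExpAddSqMg}. As $\lambda$ is bounded and, by item \ref{propAddPt3} of Proposition \ref{propAdd}, $u\mapsto e^{\kappa_u(2{\rm Re}(z_1),2{\rm Re}(z_2))}$ is bounded on $[0,T]$ (here $(2{\rm Re}(z_1),2{\rm Re}(z_2))\in D$), the integral $\int_0^{\cdot}\lambda_u\,dM_u$ is square integrable, and the complex It\^o isometry gives $\E{|\int_0^t \lambda_u\,dM_u|^2}=\E{\int_0^t |\lambda_u|^2\,d\langle M,\overline{M}\rangle_u}$. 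Applying Fubini together with $\E{X_u^a S_u^b}=e^{\kappa_u(a,b)}$ — and noting that $X^aS^b$ has no fixed time of discontinuity, so $\E{X_{u-}^aS_{u-}^b}=\E{X_u^aS_u^b}$ — produces exactly the right-hand side of \eqref{MLambda2Moment}, the deterministic initial value $M^\lambda_0=\lambda_0 X_0^{z_1}S_0^{z_2}$ entering only through the centred martingale increment.

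The step I expect to be the main obstacle is the upgrade from local to true martingale in the general case $(z_1,z_2)\in D$, where $M$ need not be square integrable; the domination bound above, combined with $\E{\sup_{u\leq T}|M_u|}<\infty$ from Proposition \ref{ExpAddMg}, is what makes it work. A secondary point requiring care is the consistent bookkeeping of the complex-valued angle bracket $\langle M,\overline{M}\rangle$ in the isometry, which is precisely what reproduces the modulus $|\lambda_u|^2$ and the real increasing function $\rho(z_1,z_2)$ in the final formula.
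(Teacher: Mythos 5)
Your proof is correct and follows essentially the same route as the paper's: both write $M^\lambda_t(z_1,z_2)=\lambda(0,z_1,z_2)M_0(z_1,z_2)+\int_0^t\lambda(u,z_1,z_2)\,dM_u(z_1,z_2)$ by integration by parts, both upgrade from local to true martingale via a second integration by parts together with the bound $\E{\sup_{t\le T}|M_t(z_1,z_2)|}<\infty$ from Proposition \ref{ExpAddMg}, and both obtain \eqref{MLambda2Moment} from the bracket formula of Proposition \ref{ExpAddSqMg} plus Fubini (your It\^o isometry is the same computation as the paper's use of the martingale property of $|M^\lambda(z_1,z_2)|^2-\langle M^\lambda(z_1,z_2)\rangle$). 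The only blemish, which you share with the paper's own proof, is that the computation of $\E{|M^\lambda_t(z_1,z_2)|^2}$ silently drops the additive constant $|\lambda(0,z_1,z_2)|^2$ contributed by the initial value $M^\lambda_0(z_1,z_2)$.
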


\begin{proof}
Let $(z_1,z_2) \in D$, $M(z_1,z_2)$ and $V(z_1,z_2)$ be the random fields introduced in Remark \ref{R13}. Since $\lambda(dt, z_1, z_2) \ll \rho^S_{dt}$, then $t\mapsto \lambda(t,z_1,z_2)$ is a bounded continuous function on $[0,T]$.
By item \ref{propAddPt5} of Proposition \ref{propAdd} $M^\lambda(z_1,z_2)$ is well-defined. 
Integrating by parts and taking into account Remark \ref{R13} 
allows to show 
\begin{equation} \label{EMLambda} 
M^\lambda_t(z_1,z_2) = \lambda(0,z_1,z_2) M_0(z_1,z_2) + \int_0^t \lambda(u,z_1,z_2) dM_u(z_1,z_2),\; \forall t\in[0,T].
\end{equation}
 Obviously   $M^\lambda(z_1,z_2)$ is  a local martingale. 
In order to prove that it is a true martingale, we establish that 
$$
\E{\sup_{t\leq T} \left| M^\lambda_t(z_1,z_2) \right| } < \infty.
$$
Indeed, by integration by parts in \eqref{EMLambda},  for $t\in [0,T]$
we have
$$
M^\lambda_t(z_1,z_2) = \lambda(t,z_1,z_2) M_t(z_1,z_2) -\int_0^t  M_{u-}(z_1,z_2)\lambda(du,z_1,z_2).
$$
Hence, as in the proof of Lemma \ref{expAddMoments},
\begin{align}
\label{EMartlambda}
\begin{split}
\E{\sup_{t\leq T} \left| M^\lambda_t(z_1,z_2) \right| } \leq &
\E{\sup_{t\leq T} \left|\lambda(t,z_1,z_2) M_t(z_1,z_2)\right|} + \E{\int_0^T \left|M_{u-}(z_1,z_2)\right| \norm{\lambda(.,z_1,z_2)}_{dt}} \\
 \leq & 2 \E{\sup_{t\leq T} \left| M_t(z_1,z_2)\right|} \norm{\lambda(.,z_1,z_2)}_T.
\end{split}
\end{align}
 Thanks to Proposition \ref{ExpAddMg}, the right-hand side of \eqref{EMartlambda} is finite
 and finally 
$ M^\lambda(z_1,z_2)$ is shown to be a martingale so that the first part of Lemma \ref{ExpAddLemma}
is proved. 

Now, suppose that $(z_1,z_2) \in D/2$. By \eqref{EMLambda} and 
 Proposition \ref{ExpAddSqMg}, we have
\begin{align}
\label{E224}
\begin{split}
\E{ \langle M^\lambda(z_1, z_2) \rangle_T } &= \E{\int_0^T |\lambda(u,z_1,z_2)|^2\langle M(z_1, z_2)\rangle_{du}}  \\
&= \E{\int_0^T X_{u-}^{2 {\rm Re}(z_1)} S_{u-}^{2{\rm Re}(z_2)}|\lambda(u,z_1,z_2)|^2 \rho_{du}(z_1,z_2)} \\
&= \int_0^T e^{\kappa_u(2{\rm Re}(z_1), 2{\rm Re}(z_2))} |\lambda(u,z_1,z_2)|^2 \rho_{du}(z_1,z_2)\\
\leq & \sup_{u\leq T} e^{\kappa_u(2{\rm Re}(z_1), 2{\rm Re}(z_2))} \int_0^T |\lambda(u,z_1,z_2)|^2 \rho_{du}(z_1,z_2) < \infty. 
\end{split}
\end{align}
The latter term is finite by  point \ref{propAddPt3} of 
Proposition \ref{propAdd} and by the fact that
 $\lambda(.,z_1,z_2)$ is bounded on $[0,T]$.
Consequently, $M^\lambda(z_1,z_2)$ is a square integrable martingale and since $|M^\lambda(z_1,z_2)|^2 - \langle M^\lambda(z_1, z_2) \rangle$ is a martingale, 
the estimate \eqref{E224} yields the desired identity \eqref{MLambda2Moment}.
\end{proof}

Now, let $\Pi$ be a finite Borel measure on $\C^2$ and let us formulate
 the following assumption on it.
\begin{assumption}
\label{A_Pi}
We set $I_0:={\rm Re}(supp\; \Pi)$. 
\begin{enumerate}
\item $I_0$ is bounded.
\item $I_0 \subset D.$
\end{enumerate}
\end{assumption}
Notice that this assumption implies that $supp\;\Pi \subset D$.

\begin{theorem} \label{T17}
Suppose that Assumptions \ref{A_PAI} and \ref{A_Pi}  are verified.
Let $\lambda: [0,T]\times \C^2 \rightarrow \C$ be a function such that
\begin{eqnarray}
\forall(z_1,z_2)\in supp\;\Pi, \; \lambda(dt, z_1, z_2) &\ll& \rho^S_{dt}, \label{CondLambda0}\\
\forall t\in[0,T], \; \int_{\C^2}d|\Pi|(z_1,z_2) |\lambda(t,z_1,z_2)|^2 &<& \infty,\label{CondLambda1}\\
\int_0^T d\rho^S_t \int_{\C^2}d|\Pi|(z_1,z_2)  \left|  \dfrac{d\lambda(t,z_1,z_2)}{d\rho^S_t} + \lambda(t,z_1,z_2) \dfrac{d\kappa_t(z_1,z_2)}{d\rho^S_t}\right| &<& \infty. \label{CondLambda3}
\end{eqnarray}
Then the function $f$ of the form \eqref{DFLaplace}
is continuous. Moreover 
\begin{equation}
\label{EMartFubini}
t \mapsto \widehat{M}^\lambda_t :=f(t,X_t,S_t) - \int_0^t \rho^S_{du} \int_{\C^2} d \Pi(z_1, z_2) X_{u-}^{z_1} S_{u-}^{z_2} \left\lbrace  \dfrac{d\lambda(u,z_1,z_2)}{d\rho^S_u} + \lambda(u,z_1,z_2) \dfrac{d\kappa_u(z_1,z_2)}{d\rho^S_u} \right\rbrace,
\end{equation}
 is a martingale.
\end{theorem}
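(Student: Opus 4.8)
The plan is to view $\widehat{M}^\lambda$ as a $\Pi$-superposition of the one-parameter martingales $M^\lambda(z_1,z_2)$ constructed in Lemma \ref{ExpAddLemma}, and to transport the martingale property through the $d\Pi$-integral by a conditional Fubini argument. First I would record the pathwise identity
\[
\widehat{M}^\lambda_t = \int_{\C^2} d\Pi(z_1,z_2)\, M^\lambda_t(z_1,z_2), \qquad t\in[0,T].
\]
The ``value'' part $f(t,X_t,S_t)=\int_{\C^2} d\Pi\, X_t^{z_1}S_t^{z_2}\lambda(t,z_1,z_2)$ is exactly the first term of \eqref{MLambda} integrated against $\Pi$, by the definition \eqref{DFLaplace}; the ``compensator'' part of \eqref{EMartFubini} is obtained by interchanging the $\rho^S_{du}$-integral and the $d\Pi$-integral. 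This interchange is legitimate pathwise a.s.: for $u\le t$ and $(z_1,z_2)\in supp\,\Pi$ the random factor $X_{u-}^{{\rm Re}(z_1)}S_{u-}^{{\rm Re}(z_2)}$ is bounded by $\sup_{u\le T}\sup_{(a,b)\in I_0} X_u^a S_u^b$, which is a.s. finite by Lemma \ref{expAddMoments} since $I_0\subset D$ is compact (Assumption \ref{A_Pi}), while the deterministic part is integrable by \eqref{CondLambda3}. As Assumption \ref{A_Pi} forces $supp\,\Pi\subset D$, each $M^\lambda(z_1,z_2)$ in the integral is a true martingale by Lemma \ref{ExpAddLemma}.

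The core step is then, for $0\le u\le t\le T$, the chain
\[
\E{\widehat{M}^\lambda_t \,|\, \cF_u} = \int_{\C^2} d\Pi(z_1,z_2)\, \E{M^\lambda_t(z_1,z_2)\,|\,\cF_u} = \int_{\C^2} d\Pi(z_1,z_2)\, M^\lambda_u(z_1,z_2) = \widehat{M}^\lambda_u,
\]
where the first equality is a conditional Fubini interchange and the second uses the martingale property from Lemma \ref{ExpAddLemma}. The only nontrivial ingredient is the domination condition
\[
\int_{\C^2} d|\Pi|(z_1,z_2)\, \E{\sup_{t\le T}\left| M^\lambda_t(z_1,z_2)\right|} < \infty,
\]
which I would verify by reusing the estimate \eqref{EMartlambda} from the proof of Lemma \ref{ExpAddLemma}, bounding $\E{\sup_{t\le T}|M^\lambda_t(z_1,z_2)|}$ by a multiple of $\E{\sup_{t\le T}|M_t(z_1,z_2)|}$ times a total-variation norm of $\lambda(\cdot,z_1,z_2)$, together with the bound of Proposition \ref{ExpAddMg} controlling $\E{\sup_{t\le T}|M_t(z_1,z_2)|}$ by $\E{\sup_{t\le T}X_t^{{\rm Re}(z_1)}S_t^{{\rm Re}(z_2)}}\,(1+\norm{\kappa(z_1,z_2)}_T)$.

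The main obstacle, and the place where the hypotheses are really consumed, is to turn these pointwise bounds into one that is $|\Pi|$-integrable. Here the compactness of $I_0={\rm Re}(supp\,\Pi)\subset D$ (Assumption \ref{A_Pi}) is decisive: item \ref{propAddPt3} of Proposition \ref{propAdd} and Lemma \ref{expAddMoments} yield bounds on $\E{\sup_{t\le T}X_t^{{\rm Re}(z_1)}S_t^{{\rm Re}(z_2)}}$ and on $\norm{\kappa(z_1,z_2)}_T$ that are uniform over $supp\,\Pi$, so that the displayed integral reduces to controlling $\int_{\C^2} d|\Pi|\,\norm{\lambda(\cdot,z_1,z_2)}_T$. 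Writing $\norm{\lambda(\cdot,z_1,z_2)}_T=\int_0^T |d\lambda(u,z_1,z_2)/d\rho^S_u|\,\rho^S_{du}$ and estimating, via the triangle inequality, against the integrand of \eqref{CondLambda3} plus the term $\int_0^T |\lambda|\,|d\kappa/d\rho^S_u|\,\rho^S_{du}$, the latter being handled by \eqref{CondLambda1} and a uniform bound on $|d\kappa/d\rho^S|$ over $supp\,\Pi$, then yields finiteness; this bridging of the gap between the combination appearing in \eqref{CondLambda3} and the norm $\norm{\lambda(\cdot,z_1,z_2)}_T$ alone is the genuinely delicate point. Finally, continuity of $f$ follows from dominated convergence: on compacts of $]0,\infty[^2$ the factor $x^{{\rm Re}(z_1)}s^{{\rm Re}(z_2)}$ is uniformly bounded by boundedness of $I_0$, $t\mapsto\lambda(t,z_1,z_2)$ is continuous since $\lambda(dt,z_1,z_2)\ll\rho^S_{dt}$, and the integrand is dominated by a $|\Pi|$-integrable function via \eqref{CondLambda1} and Cauchy--Schwarz.
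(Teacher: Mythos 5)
Your overall architecture is the same as the paper's: write $\widehat{M}^\lambda_t=\int_{\C^2}d\Pi(z_1,z_2)\,M^\lambda_t(z_1,z_2)$, invoke Lemma \ref{ExpAddLemma} for each $(z_1,z_2)$, and close with a Fubini argument; the continuity part is also essentially identical. However, there is a genuine gap in your domination step. You reduce the martingale property to
$\int_{\C^2}d|\Pi|(z_1,z_2)\,\E{\sup_{t\le T}|M^\lambda_t(z_1,z_2)|}<\infty$, and to make the bound \eqref{EMartlambda} $|\Pi|$-integrable you need (a) a control of $\int_{\C^2}d|\Pi|\,\norm{\lambda(\cdot,z_1,z_2)}_T$ and (b) uniform bounds over $supp\;\Pi$ on $\norm{\kappa(z_1,z_2)}_T$ and on $|d\kappa_t(z_1,z_2)/d\rho^S_t|$. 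Neither is available under the hypotheses of the theorem. Assumptions \ref{A_PAI} and \ref{A_Pi} together with Proposition \ref{propAdd} only give $\kappa_{dt}(z_1,z_2)\ll\rho^S_{dt}$ and $\sup_{(x,y)\in I}\sup_{t\le T}e^{\kappa_t(x,y)}<\infty$ for compact $I\subset D$; the latter bounds the supremum of $\kappa_t$, \emph{not} its total variation $\norm{\kappa(z_1,z_2)}_T$, and nothing bounds the density $d\kappa/d\rho^S$. A uniform bound of the type you invoke, $\sup_{z\in supp\,\Pi}\norm{d\kappa_t(z)/d\rho^S_t}_\infty<\infty$, is precisely item \ref{setD3} of Assumption \ref{setD}, which the authors introduce only in Section \ref{PAI} as an \emph{additional} hypothesis; it is not in force in Theorem \ref{T17}. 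So the "bridge" you identify as the delicate point — passing from the combination in \eqref{CondLambda3} to $\norm{\lambda(\cdot,z_1,z_2)}_T$ alone via the triangle inequality — cannot be built from the stated assumptions.

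The paper's proof avoids this entirely by never splitting the combination $\frac{d\lambda}{d\rho^S_t}+\lambda\frac{d\kappa_t}{d\rho^S_t}$ and never taking a supremum in $t$. It establishes only the fixed-$t$ first-moment bound $\E{\int_{\C^2}d|\Pi|\,|M^\lambda_t(z_1,z_2)|}<\infty$, obtained from two estimates that keep the hypotheses intact: the value part is controlled by \eqref{CondLambda1} together with item \ref{propAddPt3} of Proposition \ref{propAdd} (estimate \eqref{EfFinite}), and the compensator part by \eqref{CondLambda3} together with the same item (estimate \eqref{E2350}). The martingale property then follows by an unconditional Fubini against a bounded $\cF_s$-measurable random variable $G$:
\begin{equation*}
\E{\widehat{M}^\lambda_t G}=\int_{\C^2}d\Pi\,\E{M^\lambda_t(z_1,z_2)G}
=\int_{\C^2}d\Pi\,\E{M^\lambda_s(z_1,z_2)G}=\E{\widehat{M}^\lambda_s G},
\end{equation*}
for which the fixed-$t$ bound suffices. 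If you replace your sup-domination by this weaker, fixed-$t$ integrability (which your own pathwise identity already sets up), your argument goes through; as written, the key estimate fails.
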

\begin{remark} \label{R226} \
In  \eqref{CondLambda3} and  \eqref{EMartFubini}, part of the integrand with respect to $\Pi$
is only defined for $(z_1,z_2) \in D$. By convention the integrand 
will be set to zero for $(z_1,z_2) \notin D$. 
In the sequel we will adopt the same rule.
\end{remark}
\begin{proof}
Let $\lambda: [0,T]\times \C^2 \rightarrow \C$ verifying the hypotheses of the theorem.
The function $f$ is well-defined. Indeed, for $t\in[0,T], x,s> 0$,
$$|f(t,x,s)| \leq  \sup_{(a,b)\in I_0}x^{a} s^{b}  \int_{\C^2} d |\Pi|(z_1, z_2) |\lambda(t,z_1,z_2)|, $$
which is finite because of  Condition \eqref{CondLambda1} and Assumption \ref{A_Pi}, 
taking into account Cauchy-Schwarz inequality.
Moreover, by Fubini theorem and using the definition of $f$ in \eqref{DFLaplace}, we get
\begin{eqnarray}
\label{EfFinite}
\E{|f(t,X_t,S_t)|} & \leq & \int_{\C^2} d |\Pi|(z_1, z_2) \E{X_t^{{\rm Re}(z_1)}S_t^{{\rm Re}(z_2)}} |\lambda(t,z_1,z_2)| \nonumber\\
&\leq & \sup_{u\in[0,T], (a,b)\in I_0} \E{ X_{u}^{a} S_{u}^{b}} \int_{\C^2} d |\Pi|(z_1, z_2)|\lambda(t,z_1,z_2)|.
\end{eqnarray} The right-hand side is finite by item \ref{propAddPt3} of Proposition \ref{propAdd} and Condition \eqref{CondLambda1}.

We observe that $t \mapsto \lambda(t,z_1,z_2)$ is a continuous function
since it is absolutely continuous with respect to $\rho^S$ for fixed $(z_1, z_2) \in \C^2$.
On the other hand, Condition \eqref{CondLambda1} implies that the 
family $(\lambda(t,z_1,z_2), t \in [0,T]) $ is $\vert \Pi \vert$
-uniformly integrable. These properties, together with
Lebesgue dominated convergence theorem imply that
$f$ defined in \eqref{DFLaplace} is continuous with respect to all
the variables.

We show now that  the process $t \mapsto \widehat{M}^\lambda_t$ is well-defined. This 
holds because
\begin{eqnarray}
\label{E2350}
&&\E{\int_0^t \rho^S_{du} \int_{\C^2} d |\Pi|(z_1, z_2) |X_{u-}^{z_1} S_{u-}^{z_2}| \left|  \dfrac{d\lambda(u,z_1,z_2)}{d\rho^S_u} + \lambda(u,z_1,z_2) \dfrac{d\kappa_u(z_1,z_2)}{d\rho^S_u} \right| } \\
&&\leq  \displaystyle \sup_{u\in[0,T], (a,b)\in I_0} \E{ X_{u}^{a} S_{u}^{b}} \int_0^t \rho^S_{du}\int_{\C^2} d |\Pi|(z_1, z_2)  \left|  \frac{d\lambda(u,z_1,z_2)}{d\rho^S_u} + \lambda(u,z_1,z_2) \frac{d\kappa_u(z_1,z_2)}{d\rho^S_u} \right| \nonumber,
\end{eqnarray}
which is finite by point \ref{propAddPt3} of Proposition \ref{propAdd}  and 
Condition \eqref{CondLambda3}. Inequality \eqref{E2350} allows to apply Fubini theorem to the  integral term in
\eqref{EMartFubini}, so that  
 we get
\begin{eqnarray}
\label{E153}
\widehat{M}^\lambda_t &=&  \int_{\C^2} d \Pi(z_1, z_2) \left( X_{t}^{z_1} S_{t}^{z_2} \lambda(t,z_1,z_2) - \int_0^t X_{u-}^{z_1} S_{u-}^{z_2} \left\lbrace  \dfrac{d\lambda(u,z_1,z_2)}{d\rho^S_u} + \lambda(u,z_1,z_2) \dfrac{d\kappa_u(z_1,z_2)}{d\rho^S_u} \right\rbrace \rho^S_{du} \right) \nonumber\\
&=& \int_{\C^2} d \Pi(z_1, z_2) M^\lambda_t(z_1,z_2),
\end{eqnarray}
where $M^\lambda(z_1,z_2)$ is defined in \eqref{MLambda} for any $(z_1,z_2)\in D$. We observe that 
\begin{equation} \label{E2351}
\E{ \int_{\C^2} d |\Pi|(z_1, z_2) \left|M^\lambda_t(z_1,z_2)\right|} < \infty,
\end{equation}
taking into account \eqref{EfFinite} and \eqref{E2350}.
It remains to show that $\widehat{M}^\lambda$ is a martingale. \\
Let  $0\leq s \leq t \le T$ and a bounded, $\cF_s$-measurable random variable 
$G$. By Fubini theorem and Lemma  \ref{ExpAddLemma} it follows
\begin{eqnarray*}
\E{\widehat{M}^\lambda_t G} &=& \int_{\C^2} d \Pi(z_1, z_2) \E{M^\lambda_t(z_1,z_2) G} 
= \int_{\C^2} d \Pi(z_1, z_2) \E{M^\lambda_s(z_1,z_2) G} \\
&=& \E{\int_{\C^2} d \Pi(z_1, z_2) M^\lambda_s(z_1,z_2) G} = \E{\widehat{M}^\lambda_s G},
\end{eqnarray*}
which implies the desired result.
\end{proof}

We proceed now to the definition of the domain $\Da$
in view of the specification of the corresponding martingale problem. We set
\begin{align}
\label{ExpAddADom}
\begin{split}
\Da &= \Big\{ f : (t,x,s) \mapsto \int_{\C^2} d \Pi(z_1, z_2) x^{z_1} s^{z_2} \lambda(t,z_1,z_2), \forall t\in [0,T], x,s>0, \\
&\text{where } \Pi \text{ is a finite Borel measure on } \C^2\text{ verifying Assumption \ref{A_Pi},} \\
&\text{ with } \lambda:[0,T]\times\C^2\rightarrow \C \text{ Borel such that conditions \eqref{CondLambda0}, \eqref{CondLambda1}} \\
&\text{and \eqref{CondLambda3} are fulfilled} \Big\}.
\end{split}
\end{align}

\begin{corollary} \label{ExpAddMgPb}
Suppose that Assumptions \ref{A_PAI} and \ref{A_Pi}  are verified.
Then $(X,S)$ is a solution of the strong martingale problem related to $(\Da, \a, \rho^S)$ where,  for $f \in \Da$ of the form \eqref{DFLaplace}, $\a(f)$ is defined by
\begin{equation}
\label{ExpAddA}
\a(f)(t,x,s) = \int_{\C^2} d \Pi(z_1, z_2) x^{z_1} s^{z_2} \left\lbrace  \dfrac{d\lambda(t,z_1,z_2)}{d\rho^S_t} + \lambda(t,z_1,z_2) \dfrac{d\kappa_t(z_1,z_2)}{d\rho^S_t} \right\rbrace,
\end{equation}
for all $t \in [0,T], x,s >0.$
\end{corollary}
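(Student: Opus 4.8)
The plan is to read the statement off from Theorem \ref{T17}, after matching the bounded variation term appearing there with an integral of $\a(g)$. First I would fix $g \in \Da$ of the form \eqref{DFLaplace}, associated with a finite measure $\Pi$ satisfying Assumption \ref{A_Pi} and a density $\lambda$ fulfilling \eqref{CondLambda0}, \eqref{CondLambda1} and \eqref{CondLambda3}. The estimate \eqref{E2350}, established inside the proof of Theorem \ref{T17}, shows that the iterated integral of the modulus of the integrand, taken in $u$ against $d\rho^S$ and in $(z_1,z_2)$ against $d|\Pi|$, is finite in expectation. This legitimates Fubini's theorem and lets me rewrite the bounded variation term of $\widehat M^\lambda$ in \eqref{EMartFubini} as $\int_0^t \a(g)(u,X_{u-},S_{u-})\,d\rho^S_u$, where $\a(g)$ is precisely the function \eqref{ExpAddA}. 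Consequently Theorem \ref{T17} reads that $t \mapsto g(t,X_t,S_t) - \int_0^t \a(g)(u,X_{u-},S_{u-})\,d\rho^S_u = \widehat M^\lambda_t$ is a true martingale, in particular an $\cF_t$-local martingale, which is exactly the decomposition \eqref{SMgDecomp} with $A=\rho^S$.

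It then remains to check the regularity requirements of Definition \ref{DSMP}. By Proposition \ref{propAdd}, $\rho^S$ is a deterministic, continuous and increasing function; it is therefore an admissible choice both for the bounded variation process $A$ and for the dominating measure $\rho$, and one has $\norm{A}=\rho^S$. To verify that $\a(g)\in\cL$, I would fix a compact $K\subset\,]0,\infty[^2$ and bound $\sup_{(x,s)\in K}|x^{z_1}s^{z_2}|$ by $C_K:=\sup_{(x,s)\in K,\,(a,b)\in I_0}x^a s^b$, which is finite since $I_0={\rm Re}(supp\,\Pi)$ is bounded by Assumption \ref{A_Pi}. This gives
\begin{equation*}
\sup_{(x,s)\in K}|\a(g)(t,x,s)| \le C_K \int_{\C^2} d|\Pi|(z_1,z_2)\left| \frac{d\lambda(t,z_1,z_2)}{d\rho^S_t} + \lambda(t,z_1,z_2)\frac{d\kappa_t(z_1,z_2)}{d\rho^S_t}\right|,
\end{equation*}
and condition \eqref{CondLambda3} forces the right-hand side to be finite for $d\rho^S_t$-a.e.\ $t$, which is exactly the membership requirement \eqref{E23} defining $\cL$.

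Condition \eqref{aFinite} follows along the same lines: bounding $|\a(g)(u,X_{u-},S_{u-})|$ by the inner $|\Pi|$-integral of the modulus and invoking \eqref{E2350}, the expectation of $\int_0^t|\a(g)(u,X_{u-},S_{u-})|\,d\rho^S_u$ is finite, so this integral is a.s.\ finite. Moreover, since the integrator $\rho^S$ is continuous, the bounded variation component $\int_0^\cdot \a(g)(u,X_{u-},S_{u-})\,d\rho^S_u$ is continuous, so that $g(\cdot,X_\cdot,S_\cdot)=\widehat M^\lambda + \int_0^\cdot \a(g)(u,X_{u-},S_{u-})\,d\rho^S_u$ is a semimartingale with continuous bounded variation part. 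All items of Definition \ref{DSMP} being verified, $(X,S)$ solves the strong martingale problem associated with $(\Da,\a,\rho^S)$. The one genuine subtlety is the interchange between the $\Pi$-integration and the Lebesgue--Stieltjes integration, but this has already been carried out inside Theorem \ref{T17} through the identity \eqref{E153}; everything else here is bookkeeping of the integrability conditions \eqref{CondLambda1}--\eqref{CondLambda3} and of the continuity of $\rho^S$.
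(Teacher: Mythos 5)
Your proposal is correct and follows essentially the same route as the paper's own proof: the local martingale property is read off from Theorem \ref{T17} (identifying the bounded variation term in \eqref{EMartFubini} with $\int_0^\cdot \a(f)(u,X_{u-},S_{u-})\,d\rho^S_u$), while membership of $\a(f)$ in $\cL$ and Condition \eqref{aFinite} are deduced from \eqref{CondLambda3} and the estimate \eqref{E2350}. The only difference is that you spell out explicitly the compact-set bound for the $\cL$-membership and the continuity of the bounded variation component, steps the paper leaves implicit.
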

\begin{proof}
By Theorem \ref{T17} notice that $f \in \Da $ defined by \eqref{DFLaplace}
is continuous, which implies that \eqref{D26}  is fulfilled.
By \eqref{CondLambda3}, $\a(f)$ belongs to $\cL$ defined in \eqref{E23} 
and Condition \eqref{aFinite} is fulfilled. 
Finally  \eqref{SMgDecomp} is a consequence of 
\eqref{EMartFubini} in  Theorem \ref{T17}.

\end{proof}

Under additional conditions, one can say more about the martingale decomposition given by the strong martingale problem related to $(\Da, \a, \rho^S)$.

\begin{proposition}
\label{ExpAddSqIntgMg}
Suppose that Assumptions \ref{A_PAI} and \ref{A_Pi}  are verified. Let $f\in  \Da$ as defined in  \eqref{ExpAddADom}. Suppose that the following conditions are verified.
\begin{enumerate}[label=\alph*)]
\item \label{ExpAddSqIntgMg_1} $\displaystyle I_0:=\textit{{\rm Re}(supp }\Pi\text{)} \subset D/2,$
\item \label{ExpAddSqIntgMg_2} $\displaystyle \int_{\C^2}d|\Pi|(z_1,z_2) \int_0^T |\lambda(u,z_1,z_2)|^2 \rho_{du}(z_1,z_2) < \infty.$
\end{enumerate} Then, the martingale $\displaystyle t\mapsto \widehat{M}^\lambda_t=f(t,X_t,S_t) - \int_0^t \a(f)(u,X_{u-},S_{u-}) \rho^S_{du}$ is square integrable.
\end{proposition}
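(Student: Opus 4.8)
The plan is to show that the martingale $\widehat{M}^\lambda$, already proved to be a martingale in Theorem \ref{T17}, is in fact square integrable under the two additional hypotheses \ref{ExpAddSqIntgMg_1} and \ref{ExpAddSqIntgMg_2}. The natural strategy is to compute $\E{|\widehat{M}^\lambda_T|^2}$ and bound it. Recall from \eqref{E153} that $\widehat{M}^\lambda_t = \int_{\C^2} d\Pi(z_1,z_2)\, M^\lambda_t(z_1,z_2)$, where each $M^\lambda(z_1,z_2)$ is the martingale from Lemma \ref{ExpAddLemma}. Under hypothesis \ref{ExpAddSqIntgMg_1}, we have $I_0 \subset D/2$, so $supp\;\Pi \subset D/2$, and hence by the second part of Lemma \ref{ExpAddLemma} each $M^\lambda(z_1,z_2)$ is a \emph{square integrable} martingale, with second moment given explicitly by \eqref{MLambda2Moment}.

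First I would establish square integrability by estimating $\E{|\widehat{M}^\lambda_t|^2}$ directly. Writing the square as a double integral against $d\Pi \otimes d\overline{\Pi}$ and applying Fubini (justified below), one gets
\begin{equation*}
\E{|\widehat{M}^\lambda_t|^2} = \int_{\C^2}\int_{\C^2} d\Pi(z)\, d\overline{\Pi}(y)\, \E{M^\lambda_t(z)\,\overline{M^\lambda_t(y)}}.
\end{equation*}
The cleanest route, however, is to bound the modulus directly via Cauchy--Schwarz on the measure $|\Pi|$: since $\left|\widehat{M}^\lambda_t\right| \le \int_{\C^2} d|\Pi|(z)\, |M^\lambda_t(z)|$, and $|\Pi|$ is a finite measure, one obtains
\begin{equation*}
\E{|\widehat{M}^\lambda_t|^2} \le |\Pi|(\C^2) \int_{\C^2} d|\Pi|(z)\, \E{|M^\lambda_t(z)|^2}.
\end{equation*}
Then I would insert the identity \eqref{MLambda2Moment} for $\E{|M^\lambda_t(z)|^2}$ and bound $e^{\kappa_u(2{\rm Re}(z_1),2{\rm Re}(z_2))}$ uniformly in $u \le T$ and over the compact set $I_0 \subset D$, using item \ref{propAddPt3} of Proposition \ref{propAdd}. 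This separates the $u$-integral, leaving exactly the quantity $\int_{\C^2} d|\Pi|(z) \int_0^T |\lambda(u,z)|^2 \rho_{du}(z)$, which is finite by hypothesis \ref{ExpAddSqIntgMg_2}. Thus $\sup_{t\le T}\E{|\widehat{M}^\lambda_t|^2} < \infty$, which is the square integrability assertion.

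The main obstacle is the rigorous justification of interchanging the expectation (or the second-moment computation) with the $\Pi$-integral, i.e.\ the Fubini step. This requires verifying that the integrand is absolutely integrable against the product measure $d|\Pi| \otimes d\P$ (or its square analogue). I would handle this by the a priori estimate: combining \eqref{MLambda2Moment}, the uniform bound on $e^{\kappa_u}$ from Proposition \ref{propAdd}\ref{propAddPt3}, and hypothesis \ref{ExpAddSqIntgMg_2} shows $\int_{\C^2} d|\Pi|(z)\, \E{|M^\lambda_t(z)|^2} < \infty$, so each $\E{|M^\lambda_t(z)|}$ is $|\Pi|$-integrable and Fubini applies. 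Note that hypotheses \ref{ExpAddSqIntgMg_1} and \ref{ExpAddSqIntgMg_2} together strengthen the earlier conditions \eqref{CondLambda1} and \eqref{CondLambda3}: indeed \ref{ExpAddSqIntgMg_2} controls the $L^2$-norm of the angle bracket rather than merely the first-moment bounds, which is precisely what upgrades the martingale property to square integrability. The only remaining care is that the integrand in \eqref{EMartFubini} is, by the convention of Remark \ref{R226}, set to zero off $D$; since $supp\;\Pi \subset D/2 \subset D$ under \ref{ExpAddSqIntgMg_1}, this causes no difficulty.
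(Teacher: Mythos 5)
Your proposal is correct and follows essentially the same route as the paper: both use the representation $\widehat{M}^\lambda_t = \int_{\C^2} d\Pi(z_1,z_2)\, M^\lambda_t(z_1,z_2)$ from \eqref{E153}, the square integrability and second-moment identity \eqref{MLambda2Moment} of Lemma \ref{ExpAddLemma} (valid since hypothesis \ref{ExpAddSqIntgMg_1} forces $supp\;\Pi \subset D/2$), Fubini--Tonelli, the uniform bound on $e^{\kappa_u}$ from item \ref{propAddPt3} of Proposition \ref{propAdd}, hypothesis \ref{ExpAddSqIntgMg_2}, and Cauchy--Schwarz with respect to $|\Pi|$, differing only in the (immaterial) order in which Cauchy--Schwarz and the moment computation are applied. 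One cosmetic remark: the uniform bound on the exponential should be taken over $2I_0 \subset D$ (since the exponent is $\kappa_u(2{\rm Re}(z_1),2{\rm Re}(z_2))$), a point on which your write-up shares the same slight imprecision as the paper's own proof.
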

\begin{proof}
Let $t\in[0,T]$ and $\widehat{M}^\lambda$ as defined in \eqref{EMartFubini}, which is a martingale by Theorem \ref{T17}. 
 By \eqref{E153} we have
\begin{equation}
\label{EMMLambda}
\widehat{M}^\lambda_t = \int_{\C^2} d\Pi(z_1,z_2) M^\lambda_t(z_1,z_2),
\end{equation} where $M^\lambda(z_1,z_2)$ was defined in \eqref{MLambda}.
By Lemma \ref{ExpAddLemma}, 
For every $(z_1,z_2)\in D/2$, 
 $M^\lambda(z_1,z_2)$ is a square integrable martingale and  \eqref{MLambda2Moment} holds.
By Fubini theorem, integrating both sides of \eqref{MLambda2Moment} with respect to $\vert \Pi \vert$, gives
\begin{align*}
\begin{split}
\E{\int_{\C^2}d|\Pi|(z_1,z_2) |M^\lambda_t(z_1,z_2)|^2} & =  \int_{\C^2}d|\Pi|(z_1,z_2) \E{ |M^\lambda_t(z_1,z_2)|^2 } \\
& =  \int_{\C^2}d|\Pi|(z_1,z_2) \int_0^t e^{\kappa_u(2{\rm Re}(z_1), 2{\rm Re}(z_2))} |\lambda(u,z_1,z_2)|^2 \rho_{du}(z_1,z_2) \\
&\leq  \sup_{u\in [0,T], (a,b)\in I_0} e^{\kappa_u(a, b)} \int_{\C^2}d|\Pi|(z_1,z_2) \int_0^t |\lambda(u,z_1,z_2)|^2 \rho_{du}(z_1,z_2).
\end{split}
\end{align*}
Now, by point \ref{propAddPt3} of Proposition \ref{propAdd} and condition \ref{ExpAddSqIntgMg_2},
 the right-hand side is finite.
This together with \eqref{EMMLambda} and Cauchy-Schwarz show
that  $\widehat{M}^\lambda$ is  square integrable. 

\end{proof}
\begin{proposition} \label{P24}\
We suppose  the validity of Assumptions \ref{A_PAI}.
\begin{enumerate}[label=\arabic*)]
\item \label{P24_1}
Assumption \ref{E0} is verified. More precisely
\begin{enumerate}[label=\roman*)]
	\item \label{P24_1_i} $id: (t,x,s)\longmapsto s \in \Da$ and
	\begin{equation}\label{ExpAddId}
	\a(id)(t,x,s)=s\dfrac{d\kappa_t(0,1)}{d\rho^S_t},\quad \forall t\in[0,T], x,s>0.
	\end{equation}
	\item \label{P24_1_ii} $(t,x,s)\longmapsto s^2 \in \Da$ and
	\begin{equation}\label{ExpAddAlpha}
	\widetilde\a(id)(t,x,s)= s^2,\quad \forall t\in[0,T], x,s>0.
	\end{equation}
\end{enumerate}
\item \label{P24_2} Let $\Pi$ be a finite signed Borel measure on $\C^2$ verifying Assumption \ref{A_Pi}. 
Let $f\in \Da$ of the form \eqref{ExpAddADom},
 such that $\widetilde f = f\times id \in \Da$. 
Then $\widetilde{\a}$, defined in  \eqref{EAtilde}, 
 is given by, $\forall t\in[0,T], x,s>0$,
\begin{equation}
\label{ExpAddATilde}
\widetilde{\a}(f)(t,x,s) = \int_{\C^2} d \Pi(z_1, z_2) \lambda(t,z_1,z_2) x^{z_1} s^{z_2+1} \dfrac{d\rho_t(z_1, z_2,0,1)}{d\rho^S_t}.
\end{equation}
\end{enumerate}
\end{proposition}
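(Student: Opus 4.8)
The plan is to treat the two assertions separately, exploiting throughout that the defining formula \eqref{ExpAddA} delivers $\a(g)$ as soon as $g$ is written in the form \eqref{DFLaplace} with an admissible pair $(\Pi,\lambda)$.

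\textbf{Part 1.} First I would exhibit $id$ and its square as elements of $\Da$ via the most elementary representations. For $id$ take $\Pi=\delta_{(0,1)}$, the Dirac mass at $(0,1)$, with $\lambda\equiv 1$; for $\widetilde{id}$, which by \eqref{EidS} is the map $(t,x,s)\mapsto s^2$, take $\Pi=\delta_{(0,2)}$ and $\lambda\equiv 1$. In each case $I_0=\mathrm{Re}(\mathrm{supp}\,\Pi)$ is a single point, hence bounded, and it lies in $D$: indeed $(0,2)\in D$ by Assumption \ref{A_PAI} 2), and $(0,1)\in D$ because square integrability of $S_T$ forces its integrability; thus Assumption \ref{A_Pi} holds for both. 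With $\lambda\equiv 1$ the hypotheses \eqref{CondLambda0}, \eqref{CondLambda1} and \eqref{CondLambda3} reduce to $0\ll\rho^S$, to $1<\infty$, and to $\|\kappa(0,1)\|_T<\infty$ (resp. $\|\kappa(0,2)\|_T<\infty$), the last being furnished by the bounded variation of $\kappa$, item \ref{propAddPt2} of Proposition \ref{propAdd}. Substituting these data into \eqref{ExpAddA} yields \eqref{ExpAddId} immediately, and likewise $\a(\widetilde{id})(t,x,s)=s^2\,d\kappa_t(0,2)/d\rho^S_t$. The definition \eqref{EAtilde} at $y=id$, where $\widetilde{id}$ enters through $\a(\widetilde{id})$ and $id$ occurs in both remaining terms, then gives
\begin{equation*}
\widetilde{\a}(id)=\a(\widetilde{id})-2\,id\,\a(id)=s^2\,\frac{d\bigl(\kappa_t(0,2)-2\kappa_t(0,1)\bigr)}{d\rho^S_t},
\end{equation*}
and since $\rho^S_t=\kappa_t(0,2)-2\kappa_t(0,1)$ by \eqref{ExpAddRhoS}, the Radon--Nikodym derivative collapses to $1$ and $\widetilde{\a}(id)=s^2$, which is \eqref{ExpAddAlpha}.

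\textbf{Part 2.} The crux is to put $\widetilde{f}=f\times id$ into canonical form. Since
\begin{equation*}
\widetilde{f}(t,x,s)=\int_{\C^2} d\Pi(z_1,z_2)\,x^{z_1}s^{z_2+1}\lambda(t,z_1,z_2),
\end{equation*}
I would represent it as \eqref{DFLaplace} after translating the second spectral variable, i.e. with the image of $\Pi$ under $(z_1,z_2)\mapsto(z_1,z_2+1)$ and the correspondingly relabelled density. The standing hypothesis $\widetilde f\in\Da$ is exactly what makes this translated datum admissible, and in particular guarantees $(z_1,z_2+1)\in D$ on $\mathrm{supp}\,\Pi$, so that $d\kappa_t(z_1,z_2+1)/d\rho^S_t$ is well defined by item \ref{propAddPt5} of Proposition \ref{propAdd}. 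Applying \eqref{ExpAddA} in the translated variables produces
\begin{equation*}
\a(\widetilde{f})(t,x,s)=\int_{\C^2} d\Pi(z_1,z_2)\,x^{z_1}s^{z_2+1}\left\{\frac{d\lambda(t,z_1,z_2)}{d\rho^S_t}+\lambda(t,z_1,z_2)\frac{d\kappa_t(z_1,z_2+1)}{d\rho^S_t}\right\}.
\end{equation*}

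I would then assemble $\widetilde{\a}(f)=\a(\widetilde f)-f\,\a(id)-id\,\a(f)$ from \eqref{EAtilde}, using \eqref{ExpAddId} for $\a(id)$ and \eqref{ExpAddA} for $\a(f)$. Under the integral all three terms carry the common factor $x^{z_1}s^{z_2+1}\lambda(t,z_1,z_2)$; the $d\lambda/d\rho^S_t$ contributions of $\a(\widetilde f)$ and of $id\,\a(f)$ cancel, and the surviving $\kappa$-increments telescope, by the very definition \eqref{ExpAddRhoS}, into
\begin{equation*}
\kappa_t(z_1,z_2+1)-\kappa_t(z_1,z_2)-\kappa_t(0,1)=\rho_t(z_1,z_2,0,1).
\end{equation*}
Carrying the resulting density inside the integral, licit because each $\kappa$ is absolutely continuous with respect to $\rho^S$, delivers exactly \eqref{ExpAddATilde}.

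The step I expect to demand the most care is this change of spectral variable in Part 2: one must confirm that translating $\Pi$ and $\lambda$ preserves membership in $\Da$ and, above all, that $d\kappa_t(z_1,z_2+1)/d\rho^S_t$ is meaningful, which is precisely where the assumption $\widetilde f\in\Da$ (equivalently $I_0+(0,1)\subset D$) is consumed. Everything else is bookkeeping of Radon--Nikodym derivatives together with a Fubini interchange of the type already justified in the proof of Theorem \ref{T17}.
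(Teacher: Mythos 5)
Your proposal is correct and follows essentially the same route as the paper: Dirac masses $\delta_{(0,1)}$ and $\delta_{(0,2)}$ with $\lambda\equiv 1$ for item \ref{P24_1}, and a direct application of \eqref{ExpAddA} to the translated representation of $\widetilde f$ combined with the telescoping identity $\kappa_t(z_1,z_2+1)-\kappa_t(z_1,z_2)-\kappa_t(0,1)=\rho_t(z_1,z_2,0,1)$ for item \ref{P24_2}. You merely spell out the cancellations and the verification of \eqref{CondLambda0}--\eqref{CondLambda3} that the paper dismisses as ``trivially satisfied'' and ``a direct application''.
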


\begin{proof} \

We first address item \ref{P24_1}. 
\begin{enumerate}[label=\roman*)]
\item Let  $\Pi_1(z_1,z_2)=\delta_{\{z_1=0,z_2=1\}}$ and $\lambda\equiv1$.
Since  by Assumption \ref{A_PAI} $(0,1) \in D$,
 $\Pi_1$ fulfills Assumption \ref{A_Pi}.
The other conditions \eqref{CondLambda0},  \eqref{CondLambda1},  \eqref{CondLambda3}
defining $\Da$ in \eqref{ExpAddADom} are trivially satisfied.
Consequently $id \in \Da$ and
\eqref{ExpAddId} follows from \eqref{ExpAddA}.
\item Let $\Pi_2(z_1,z_2)=\delta_{\{z_1=0,z_2=2\}}$ and $\lambda\equiv1$.
Again, by Assumption \ref{A_PAI} $(0,2) \in D$,
 and  $\Pi_2$ fulfills Assumption \ref{A_Pi}.
Similar arguments as for \ref{P24_1_i} allow to show that
 $(t,x,s)\longmapsto s^2 \in \Da$.
\end{enumerate}

Formula \eqref{ExpAddATilde}  constitutes a direct   application of \eqref{ExpAddA},  
taking into account \eqref{ExpAddADom},
  which establishes item \ref{P24_2}.
In particular \eqref{ExpAddAlpha} holds.
\end{proof}

\section{The basic BSDE  and the deterministic problem}
\label{S3_BSDE}

\setcounter{equation}{0}

\subsection{Forward-backward SDE}
\label{SGF}

We consider two $\cF_t$-adapted  processes  $(X,S)$
fulfilling the martingale
 problem related to $(\Da, \a, A)$ stated in Definition \ref{DSMP}
 under  Assumption \ref{E0}. We denote by $M^S$  (resp. $V^S$) the martingale part
(resp. the predictable bounded variation component) 
 of the special semimartingale $S$. 

Let $f: [0,T] \times \cO \times \C^2 \longrightarrow \C$ 
be a locally bounded function 
and a random variable $h := g(X_T, S_T)$, for  some continuous function $g: \cO \rightarrow \C$. In this chapter we concentrate on forward-backward SDEs of the type
\begin{equation}
\label{BSDEMgMarkov}
Y_t = h + \int_t^T f(u,X_{u-}, S_{u-}, Y_{u-},Z_u)d A_u - \int_t^T Z_u dM^S_u - (O_T-O_t), \quad t\in [0,T].
\end{equation}


\begin{definition} 
\label{DefBSDE}
A triplet $(Y,Z,O)$ of processes  is called {\bf solution} of \eqref{BSDEMgMarkov} if the following
 conditions hold. 
\begin{enumerate}[label=\arabic*)]
\item \label{DefBSDE_1} $(Y_t)$ is $\cF_t$-adapted;
\item \label{DefBSDE_2} $(Z_t)$ is $\cF_t$-predictable and
	\begin{enumerate}
	\item $\int_0^T |Z_u|^2 d\langle M^S\rangle_u < \infty$ a.s.
	\item $\int_0^T |f(u,X_{u-}, S_{u-}, Y_{u-},Z_u)| d \norm{A}_u < \infty $ a.s.
	\end{enumerate}
\item \label{DefBSDE_3} Equality \eqref{BSDEMgMarkov} holds and
 $(O_t)$ is an $\cF_t$-local martingale such that $\langle O,M^S\rangle=0$ and $O_0 = 0$ a.s.
\end{enumerate}
\end{definition}




Our object of study is the formulation of a deterministic problem linked to the BDSE \eqref{BSDEMgMarkov}, generalizing the
 "classical" semilinear PDE in the Brownian motion case.
 In particular we look for solutions $(Y,Z,O)$ for which there is a function $y \in \Da$
such that $\widetilde y = y \times id \in \Da$ and a
 locally bounded Borel function 
$
 z: [0,T]\times \cO \longrightarrow \C,
$ such that 
\begin{equation}
\label{solForm}
Y_t = y(t,X_t,S_t), \quad
Z_t = z(t,X_{t-},S_{t-}), \quad \forall t \in [0,T],
\end{equation} and
\begin{equation}
\label{condIntegYZ}
\int_0^t \vert Z_u\vert^2 d\langle M^S\rangle_u  < \infty \text{ a.s.} \quad    
\int_0^t |f(u, X_{u-}, S_{u-}, Y_{u-}, Z_u)|d \norm{A}_u <\infty \text{ a.s.}
\end{equation}
By \eqref{solForm} and \eqref{condIntegYZ}, 
Conditions \ref{DefBSDE_1} and \ref{DefBSDE_2} of Definition
\ref{DefBSDE} are obviously fulfilled.
Consequently the triplet $(Y,Z,O)$ where
\begin{equation}
\label{orthMG}
O_t := Y_t-Y_0-\int_0^t Z_u dM^S_u + \int_0^t f(u,X_{u-},S_{u-},Y_{u-},Z_u) dA_u,
\end{equation}
is a solution of \eqref{BSDEI} provided that
\begin{eqnarray}
&1.&(O_t) \text{ is an } \cF_t\text{-local martingale}, \label{cond1} \\
&2.&\langle O,M^S\rangle=0, \label{cond2} \\
&3.& Y_T = g(X_T,S_T). \label{cond3}
\end{eqnarray}
Since $(X,S)$ solves the strong martingale problem related to  $(\Da, \a, A)$,
replacing \eqref{solForm} in expression \eqref{orthMG},  
Condition \eqref{cond1} can be rewritten saying that
$
\int_0^t \a(y)(u,X_{u-}, S_{u-}) dA_u + \int_0^t f(u,X_{u-},S_{u-},Y_{u-},Z_u)dA_u
$ is an $\cF_t$-local martingale. This implies that
\begin{equation}
\label{cond1bis}
\int_0^t \a(y)(u,X_{u-}, S_{u-}) dA_u + \int_0^t f(u,X_{u-},S_{u-},Y_{u-},Z_u)dA_u = 0.
\end{equation}
On the other hand, Condition \eqref{cond2} implies 
\begin{equation}
\label{cond2bis}
\langle M^Y, M^S\rangle_t = \int_0^t Z_u d\langle M^S\rangle_u,
\end{equation} where $M^Y$ denotes the martingale part of $Y$. 
By Lemma \ref{LC} and item \ref{RSpecial_2} of Corollary \ref{RSpecial}, Condition \eqref{cond2bis} can be re-expressed as
\begin{equation}
\label{cond2ter}
\int_0^t \widetilde\a(y)(u,X_{u-},S_{u-})dA_u  = \int_0^t z(u,X_{u-},S_{u-}) \widetilde\a(id)(u,X_{u-},S_{u-})dA_u.
\end{equation}
Condition \eqref{cond3} requires $y(T,\cdot,\cdot) = g(\cdot,\cdot)$.
This allows to state below a representation theorem.

\begin{theorem} \label{T19}
Suppose the existence of a 
function $y,$ such that $y, \widetilde{y}:=y\times id$ belong to $\Da$, 
and a Borel locally bounded   function $z$, solving the system
\begin{eqnarray}
\a(y)(t, x, s) &=& - f(t, x, s, y(t, x, s), z(t, x, s))\label{suffConds1}\\
\widetilde{\a}(y)(t, x, s) &=& z(t, x, s) \widetilde{\a}(id)(t, x, s) \label{suffConds2},
\end{eqnarray}for $t\in [0,T]$ and $(x,s)\in\cO$, where the equalities hold
 in $\cL$, with the terminal condition $y(T,.,.) = g(.,.)$.

Then the triplet $(Y,Z,O)$ defined by 
\begin{equation} \label{ESolYZ}
 Y_t = y(t,X_t,S_t), \; Z_t = z(t,X_{t-},S_{t-})
\end{equation}
 and $(O_t)$ 
 given by \eqref{orthMG}, is a solution to the BSDE \eqref{BSDEMgMarkov}.
\end{theorem}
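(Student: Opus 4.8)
The plan is to verify directly that the triplet $(Y,Z,O)$ constructed from the solution $(y,z)$ of the deterministic system \eqref{suffConds1}--\eqref{suffConds2} satisfies conditions \eqref{cond1}, \eqref{cond2}, \eqref{cond3}, since — as already established in the discussion preceding the statement — these three conditions together with \eqref{solForm} and \eqref{condIntegYZ} guarantee that $(Y,Z,O)$ solves \eqref{BSDEMgMarkov}. Thus the theorem amounts to checking that the \emph{deterministic} equations imply the \emph{pathwise/stochastic} conditions on the triplet. First I would set $Y_t = y(t,X_t,S_t)$ and $Z_t = z(t,X_{t-},S_{t-})$ as in \eqref{ESolYZ}, observe that $Y$ is $\cF_t$-adapted and $Z$ is $\cF_t$-predictable (being a left-limit of a Borel function of $(X,S)$), so that conditions \ref{DefBSDE_1} and \ref{DefBSDE_2} of Definition \ref{DefBSDE} follow once the integrability requirements \eqref{condIntegYZ} are verified; these I would obtain from the local boundedness of $z$ and $f$ together with the finiteness condition \eqref{aFinite} built into the strong martingale problem.

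Next I would address condition \eqref{cond1}, namely that $O$ defined in \eqref{orthMG} is a local martingale. Because $(X,S)$ solves the strong martingale problem related to $(\Da,\a,A)$ and $y \in \Da$, the process $t \mapsto Y_t - \int_0^t \a(y)(u,X_{u-},S_{u-})\,dA_u$ is a local martingale with martingale part $M^Y$. Substituting the definition of $O$ and rearranging, $O$ differs from this local martingale by the bounded-variation term $\int_0^\cdot \big(\a(y)(u,X_{u-},S_{u-}) + f(u,X_{u-},S_{u-},Y_{u-},Z_u)\big)\,dA_u$; by the first deterministic equation \eqref{suffConds1} this integrand vanishes identically (the equality holding in $\cL$, hence $d\rho$-a.e. and therefore $dA$-a.e. since $dA \ll d\rho$), so $O = M^Y - \int_0^\cdot Z_u\,dM^S_u$ up to the constant $Y_0$, which is manifestly a local martingale. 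This simultaneously identifies the martingale part of $O$, which is what I would use next.

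For condition \eqref{cond2}, I would compute $\langle O, M^S\rangle$. From the identification just obtained, $\langle O, M^S\rangle = \langle M^Y, M^S\rangle - \int_0^\cdot Z_u\,d\langle M^S\rangle_u$. Here Lemma \ref{LC} gives $\langle M^Y, M^S\rangle = \int_0^\cdot \widetilde\a(y)(u,X_{u-},S_{u-})\,dA_u$, while item \ref{RSpecial_2} of Corollary \ref{RSpecial} gives $\langle M^S\rangle = \int_0^\cdot \widetilde\a(id)(u,X_{u-},S_{u-})\,dA_u$. Substituting $Z_u = z(u,X_{u-},S_{u-})$, the difference becomes $\int_0^\cdot \big(\widetilde\a(y) - z\,\widetilde\a(id)\big)(u,X_{u-},S_{u-})\,dA_u$, which vanishes precisely by the second deterministic equation \eqref{suffConds2}. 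Hence $\langle O, M^S\rangle = 0$, giving both strong orthogonality and $O_0 = 0$ by construction. Finally condition \eqref{cond3} is immediate from the terminal condition $y(T,\cdot,\cdot) = g(\cdot,\cdot)$, since $Y_T = y(T,X_T,S_T) = g(X_T,S_T) = h$.

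The routine parts are the integrability verifications; the only delicate point I anticipate is the passage from the deterministic equalities ``in $\cL$'' to the pathwise vanishing of the $dA$-integrals, which requires knowing that equality $d\rho$-a.e. in $\cL$ transfers to $dA$-almost-everywhere equality along the trajectory $(X_{u-},S_{u-})$ — this rests on the absolute continuity $dA \ll d\rho$ assumed in Definition \ref{DSMP} and on the fact that the relevant angle brackets and bounded-variation parts are themselves $dA$-integrals, so that the identities \eqref{cond1bis} and \eqref{cond2ter} hold as equalities of processes, not merely of integrands. Once this is handled, the conclusion follows by assembling \eqref{cond1}--\eqref{cond3}.
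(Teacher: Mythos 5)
Your proposal is correct and takes essentially the same route as the paper: the paper's own proof consists only of the integrability checks for \eqref{condIntegYZ} (bounding $\int_0^T|f(u,X_{u-},S_{u-},Y_{u-},Z_u)|\,d\norm{A}_u$ via \eqref{suffConds1} and \eqref{aFinite}, and $\int_0^T|Z_u|^2 d\langle M^S\rangle_u$ via local boundedness of $z$), since the verification of \eqref{cond1}--\eqref{cond3} through Lemma \ref{LC} and Corollary \ref{RSpecial} that you spell out is precisely the derivation carried out in the discussion preceding the theorem. Your explicit handling of the passage from equalities in $\cL$ to $dA$-a.e.\ equalities along the trajectory $(X_{u-},S_{u-})$, resting on $dA_t \ll d\rho_t$, is a detail the paper leaves implicit.
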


\begin{proof} 
The triplet $(Y,Z,O)$ fulfills the three conditions of
 Definition \ref{DefBSDE}
provided that \eqref{condIntegYZ} is verified.
Indeed, since $y  \in \Da$ then 
the  integral $\int_0^t |f(u,X_{u-},S_{u-},Y_{u-},Z_u)| d\norm{A}_u,$ 
is finite taking into account \eqref{aFinite}. 
Since $z$ is locally bounded, then 
$\int_0^T \vert Z_u\vert^2 d\langle M\rangle_u$ is also finite.
This concludes the proof of the theorem.
\end{proof}
\begin{remark}\label{RC33}\
\begin{enumerate}[label=\arabic*.]
\item \label{RC33_1}
The statement of Theorem \ref{T19} can be generalized relaxing the assumption on $z$ to be locally bounded. 
We replace this with the condition
\begin{equation}
\label{ERC43}
\int_0^T z^2(u,X_{u-},S_{u-})\widetilde\a(id)(u,X_{u-},S_{u-}) dA_u < \infty \text{ a.s.}
\end{equation}
This is equivalent  to $\int_0^T \vert Z_u\vert ^2
 d\langle M \rangle_u < \infty  \text{ a.s.}$

\item \label{RC33_2} In particular, if $z$ is locally bounded a.s., then 
\eqref{ERC43} is fulfilled.
\end{enumerate}
\end{remark}
\begin{remark}\label{RUniqueness}
Theorem \ref{T19} constitutes also an existence theorem for particular BSDEs. 
If $M^S$ is a square integrable martingale and the function 
 $\hat f$ associated with $f$, 
fulfills some Lipschitz type conditions then the solution $(Y,Z,O)$
provided by \eqref{ESolYZ} is unique  in the class of processes
introduced in   \cite[Theorem 3.1]{CarFerSan08}. 
\end{remark}
The presence of the local martingale $O$ is closely related to the classical martingale representation property. In fact, if $(\Omega, \cF, \P)$ verifies the local martingale representation property with respect to $M^S$, then $O$ vanishes.
\begin{proposition}
Suppose that $(\Omega, \cF, \P)$ fulfills the local martingale representation property with respect to $M$. Then, if $(Y,Z,O)$ is a solution to \eqref{BSDEMgMarkov} in the sense of Definition \ref{DefBSDE}, then, necessarily $O_t=0, \; \forall t\in [0,T]$.
\end{proposition}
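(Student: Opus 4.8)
The plan is to combine the strong orthogonality of $O$ to $M^S$ demanded in Definition \ref{DefBSDE} with the representation hypothesis; this reduces the statement to the elementary fact that a local martingale which is simultaneously a stochastic integral against $M^S$ and strongly orthogonal to $M^S$ must vanish. (Here I read ``$M$'' as the martingale part $M^S$ of $S$.)

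First I would recall that, by item \ref{DefBSDE_3} of Definition \ref{DefBSDE}, $O$ is an $\cF_t$-local martingale with $O_0 = 0$ satisfying $\langle O, M^S\rangle = 0$. Since $(\Omega, \cF, \P)$ enjoys the local martingale representation property with respect to $M^S$, there is a predictable, $M^S$-integrable process $K$ such that
\begin{equation*}
O_t = \int_0^t K_u \, dM^S_u, \quad \forall t \in [0,T].
\end{equation*}
If $O$ is $\C$-valued, I would apply the representation to its real and imaginary parts separately and assemble $K = K^1 + iK^2$.

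Next I would compute the angle bracket. From the representation one has $\langle O, M^S\rangle_t = \int_0^t K_u \, d\langle M^S\rangle_u$, so the orthogonality condition forces $\int_0^t K_u \, d\langle M^S\rangle_u = 0$ for every $t \in [0,T]$. Since $d\langle M^S\rangle$ is a non-negative random measure (treating $\mathrm{Re}(K)$ and $\mathrm{Im}(K)$ separately in the complex case), the vanishing of this indefinite integral for all $t$ yields $K = 0$, $d\langle M^S\rangle \otimes d\P$-almost everywhere. Finally, because the stochastic integral depends on $K$ only up to $d\langle M^S\rangle$-null sets, $K = 0$ a.e.\ gives $O_t = \int_0^t K_u \, dM^S_u = 0$ for all $t$, a.s.; equivalently $\langle O\rangle_t = \int_0^t |K_u|^2 \, d\langle M^S\rangle_u = 0$ together with $O_0 = 0$ forces $O \equiv 0$.

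The proof is essentially routine, so there is no deep obstacle; the points requiring a little care are purely technical. One must make sure the representation property is invoked in its form for \emph{local} martingales (not merely square-integrable ones), handle the complex-valued bookkeeping by splitting into real and imaginary parts, and justify the implication ``$\int_0^\cdot K_u \, d\langle M^S\rangle_u \equiv 0 \Rightarrow K = 0$ $d\langle M^S\rangle$-a.e.'', which relies precisely on $d\langle M^S\rangle$ being a (positive) measure. I expect this last measure-theoretic step, together with confirming integrability of $K$, to be the only place where one should be slightly attentive.
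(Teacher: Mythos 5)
Your proposal is correct and follows essentially the same route as the paper: invoke the representation property to write $O_t=\int_0^t U_u\,dM^S_u$, use $\langle O,M^S\rangle=0$ to get $\int_0^\cdot U_u\,d\langle M^S\rangle_u=0$, conclude $U=0$ $d\P\otimes d\langle M^S\rangle$-a.e., and hence $O\equiv 0$. Your additional remarks on the complex-valued bookkeeping and the measure-theoretic step are sound refinements of the same argument.
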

\begin{proof}
Since $(O_t)$ is an $\cF_t$-local martingale, there is a predictable process
 $(U_t)$ such that
$
O_t=O_0+\int_0^t U_u dM^S_u, \; \forall t \in [0,T], \text{ with } O_0=0.
$
So the condition $\langle O,M^S\rangle \equiv 0$ implies $\int_0^. U_u d \langle M^S\rangle_u=0.$

Consequently, $\;U \equiv 0 \; d \P \otimes d\langle M^S\rangle \; \text{a.e.,}\;$ and so $\;O_t=O_0=0 \; \forall t \in [0,T].$

\end{proof}

\begin{remark} \label{RLink}
We end this section recalling that  the forward-backward  SDE 
\eqref{BSDEMgMarkov} that we study is a particular of  the general
BSDE 
\eqref{BSDEI} driven by a martingale,  provided in the Introduction.
The link is given by \eqref{EFormTildef}.
\end{remark}

\subsection{Illustration 1: the Markov semigroup case}

Let us consider the case of Section \ref{SMarkov}  with related notations. Let $S = X^{0,x}$ be a solution of the strong martingale problem related to
 $(\Da, \a, A)$, see Definition \ref{DSMPMod}.
Let $(P_t)$ be the semigroup introduced in \eqref{EPT}, fulfilling Assumption \ref{A_Markov} with generator $L$
defined in Definition \ref{DGen}. Let $f: [0,T] \times \R \times \C \longrightarrow \C$ be a locally bounded function and a continuous function $g: \R \longrightarrow \C$.\\
Here we have of course $S = M^S + V^S$ where $V^S = \int_0^\cdot \a(id)(u,S_{u-}) du$ and $id(s) \equiv s$.

Theorem \ref{T19} gives rise to the following proposition.

\begin{proposition}
Suppose the existence of  a function $y:[0,T]\times \R \to \C$ and a Borel locally bounded function $z:[0,T]\times \R \to \C$ fulfilling the following conditions.

\begin{enumerate}[label=\roman*)]
\item $t\mapsto y(t,\cdot)\; (\text{resp. } \tilde y(t,\cdot))$  takes value in $D(L)$ and it is continuous with respect to the graph norm.
\item $t\mapsto y(t,\cdot)\; (\text{resp. }\tilde y(t,\cdot))$ is of class $C^1$ with values in $E$.

\item For $(t,x) \in [0,T] \times \R$,
\begin{eqnarray*}
 \partial_t y(t,x) + Ly(t,\cdot)(x)  &=&  - f(t,x,y(t,x),z(t,x)),   \\
z(t,\cdot)  \widetilde L(id) &=&  \widetilde L y(t,\cdot),  \\
y(T,.) &=& g,
\end{eqnarray*}
where $\widetilde L \varphi = L \tilde \varphi - \varphi L id  - id L \varphi$.
\end{enumerate}
Then the triplet
 $(Y,Z,O)$, where 
$$ Y_t = y(t,X_t,S_t), \; Z_t = z(t,X_t,S_t),$$ and $(O_t)$ is given by \eqref{orthMG} is 
a solution to the BSDE \eqref{BSDEMgMarkov}.
\end{proposition}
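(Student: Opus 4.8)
The plan is to recognise this proposition as a direct specialisation of the abstract representation Theorem \ref{T19} to the Markov semigroup framework of Theorem \ref{thMarkov}, so that the entire task reduces to translating the hypotheses (i)--(iii) into the language of $(\Da,\a,\widetilde\a)$. First I would recall that, by Theorem \ref{thMarkov}, the domain here is $\Da = \{g:[0,T]\to D(L) \text{ satisfying items \ref{lemMA1} and \ref{lemMA2} of Lemma \ref{lemmaMarkov}}\}$ and that $\a(g)(t,s) = \partial_t g(t,s) + Lg(t,\cdot)(s)$. Hypotheses (i) and (ii) of the proposition, imposed simultaneously on $y$ and on $\widetilde y = y\times id$, are then exactly the assertion that $y\in\Da$ and $\widetilde y\in\Da$, which is the membership requirement of Theorem \ref{T19}; and the assumption that $z$ is Borel and locally bounded matches the corresponding hypothesis verbatim.

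Next I would identify the two ''PDE-type'' identities. The first line of (iii), namely $\partial_t y + Ly = -f(\cdot,y,z)$, is nothing but $\a(y) = -f(\cdot,y,z)$, i.e. \eqref{suffConds1}. For the second line I must show that $\widetilde\a$ collapses to $\widetilde L$ on these functions. Writing out the definition \eqref{EAtilde}, $\widetilde\a(y) = \a(\widetilde y) - y\,\a(id) - id\,\a(y)$, and expanding each $\a$ via $\a(g)=\partial_t g + Lg$, the crucial observation is that $id(s)=s$ carries no time dependence, so that $\partial_t\widetilde y = \partial_t(id\cdot y) = id\,\partial_t y$ while $\partial_t\,id = 0$. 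Hence the time-derivative contributions are $id\,\partial_t y$ coming from $\a(\widetilde y)$, zero coming from $-y\,\a(id)$, and $-id\,\partial_t y$ coming from $-id\,\a(y)$; these cancel, leaving $\widetilde\a(y) = L\widetilde y - y\,L(id) - id\,Ly = \widetilde L y$. The same computation with $y$ replaced by $id$ gives $\widetilde\a(id) = \widetilde L(id)$. Consequently the second line of (iii), $z\,\widetilde L(id) = \widetilde L y$, is precisely \eqref{suffConds2}, and the terminal condition $y(T,\cdot)=g$ coincides in both formulations.

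Having thus verified every hypothesis of Theorem \ref{T19} (with $\cO=\R$ and the couple $(X,S)$ replaced by the single process $S$), I would simply invoke that theorem to conclude that the triplet $(Y,Z,O)$, with $Y_t=y(t,S_t)$, $Z_t=z(t,S_t)$ and $O$ defined by \eqref{orthMG}, solves \eqref{BSDEMgMarkov}. The step I expect to demand the most care is the algebraic reduction $\widetilde\a = \widetilde L$: one must check that $\widetilde y(t,\cdot)$ genuinely lies in $D(L)$ (guaranteed by applying hypothesis (i) to $\widetilde y$, rather than to $y$ alone) so that $L\widetilde y$, $L\,id$ and $Ly$ all make sense as elements of $E$, and that the cancellation of the time derivatives is legitimate, which hinges only on the time-independence of $id$. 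Everything else is a faithful transcription of the general result.
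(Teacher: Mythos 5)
Your proposal is correct and follows exactly the route the paper intends: the proposition is stated as an immediate consequence of Theorem \ref{T19} applied in the setting of Theorem \ref{thMarkov}, with hypotheses (i)--(ii) identified with $y,\widetilde y\in\Da$ and the system in (iii) identified with \eqref{suffConds1}--\eqref{suffConds2}. Your explicit verification that $\widetilde\a$ reduces to $\widetilde L$ (the cancellation of the $id\,\partial_t y$ terms, using $\partial_t\,id=0$, and the analogous computation giving $\widetilde\a(id)=L(id^2)-2\,id\,L(id)=\widetilde L(id)$) is precisely the detail the paper leaves implicit, so nothing is missing.
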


\begin{remark} \label{RMSC}
If $S = \sigma W$ with $\sigma > 0$
and $\varphi:[0,T] \times \R \rightarrow \C$ is of class $C^{1,2}$  
 then $\a(\varphi) = \partial_t \varphi + \frac{\sigma^2}{2} \partial_{ss} \varphi $ and
$\widetilde \a(\varphi)   = \sigma^2 \partial_s \varphi = \widetilde \a(id) \partial_s \varphi $. 
In the case where $L$ is a generic generator, 
 the formal quotient $\frac{\widetilde \a(\varphi)}{\widetilde \a(id)}$ 
can be considered as a sort of generalized derivative.
\end{remark}

\subsection{Illustration 2: the diffusion case}
Consider the case of where $(X,S)$ is diffusion process as given in equations \eqref{expleDiff}. 
We recall that in that case, the operator $\a$, for $\varphi\in \mathcal{C}^{1,2}([0,T]\times \R^2)$, 
is given by
\begin{eqnarray*}
\a(\varphi) &=& \partial_t \varphi + b_S \partial_s \varphi + b_X \partial_x \varphi\\
&+&  \frac{1}{2} \left\lbrace |\sigma_S|^2\partial_{ss} \varphi + |\sigma_X|^2 \partial_{xx} \varphi + 2 \langle \sigma_S, \sigma_X\rangle \partial_{sx} \varphi  \right\rbrace .
\end{eqnarray*}

\begin{corollary}
Let $(y,z)$ be a solution of the PDE
\begin{eqnarray}
\a(y)(t,x,s) &=& -f(t,x,s,y(t,x,s),z(t,x,s))\\
|\sigma_S|^2 z(t,x,s) &=& |\sigma_S|^2 \partial_s y(t,x,s) + \langle \sigma_S, \sigma_X\rangle \partial_x y(t,x,s),
\end{eqnarray} with terminal condition $y(T,.,.) = g(.,.)$. Then the triplet
 $(Y,Z,O)$, where 
$$ Y_t = y(t,X_t,S_t), \; Z_t = z(t,X_t,S_t),$$ and $(O_t)$ is given by \eqref{orthMG} is 
a solution to the BSDE \eqref{BSDEMgMarkov}.
\end{corollary}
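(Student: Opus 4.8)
The plan is to read the stated PDE system as nothing more than the specialisation of the abstract system \eqref{suffConds1}--\eqref{suffConds2} of Theorem \ref{T19} to the diffusion operator computed in Section \ref{E22}, and then to invoke that theorem directly. First I would recall from Section \ref{E22} that for the diffusion \eqref{expleDiff} one has $A_t \equiv t$, that $\a$ acts on $\varphi \in \mathcal{C}^{1,2}$ exactly as displayed in the statement, and that a straightforward computation gives $\widetilde{\a}(\varphi) = |\sigma_S|^2 \partial_s \varphi + \langle \sigma_S,\sigma_X\rangle \partial_x \varphi$, so in particular $\widetilde{\a}(id) = |\sigma_S|^2$. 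With these explicit forms in hand, the first PDE equation $\a(y) = -f(\cdot,\cdot,\cdot,y,z)$ is literally \eqref{suffConds1}, while the second equation $|\sigma_S|^2 z = |\sigma_S|^2 \partial_s y + \langle \sigma_S,\sigma_X\rangle \partial_x y$ is precisely $\widetilde{\a}(y) = z\,\widetilde{\a}(id)$, i.e. \eqref{suffConds2} (the common factor $|\sigma_S|^2$ is retained on both sides because $\widetilde{\a}(id)=|\sigma_S|^2$ may vanish, and the identities are only required to hold in $\cL$). The terminal condition matches verbatim.

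Next I would verify the hypotheses of Theorem \ref{T19}, namely that $y$ and $\widetilde{y}:=y\times id$ both belong to $\Da = \mathcal{C}^{1,2}([0,T[\times \cO)\cap \mathcal{C}^{1}([0,T]\times \cO)$, and that $z$ is Borel and locally bounded. Membership $y\in\Da$ is implicit in the statement, since the PDE is meaningful only when $\partial_t y,\partial_s y,\partial_x y,\partial_{ss}y,\partial_{xx}y,\partial_{sx}y$ exist and are continuous. For $\widetilde{y}$, I would observe that $id:(t,x,s)\mapsto s$ is smooth, and a product of a $\mathcal{C}^{1,2}$ function with a smooth function is again $\mathcal{C}^{1,2}$; hence $\widetilde{y}=y\times id\in\Da$. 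Local boundedness of $z$ comes with the standing class of admissible $z$; alternatively, wherever $|\sigma_S|^2>0$ the second equation forces $z = \partial_s y + \frac{\langle \sigma_S,\sigma_X\rangle}{|\sigma_S|^2}\partial_x y$, a continuous and therefore locally bounded function.

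Having matched the two equations and checked the domain and regularity requirements, the conclusion is immediate from Theorem \ref{T19}: the triplet $(Y,Z,O)$ with $Y_t=y(t,X_t,S_t)$, $Z_t=z(t,X_t,S_t)$ and $O$ defined by \eqref{orthMG} solves the BSDE \eqref{BSDEMgMarkov}; note that, since $X$ and $S$ are continuous here, writing $z(t,X_t,S_t)$ coincides with the left-limit form $z(t,X_{t-},S_{t-})$ used in Theorem \ref{T19}. I do not expect any genuine obstacle: the only points requiring a little care are the verification that $\widetilde{y}\in\Da$ and the identification of $\widetilde{\a}$ with the operator computed in Section \ref{E22}, both of which are routine.
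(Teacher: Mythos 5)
Your proposal is correct and takes essentially the same route the paper intends: the corollary is presented as a direct specialisation of Theorem \ref{T19}, using the expressions $\a$, $\widetilde{\a}(\varphi)=|\sigma_S|^2\partial_s\varphi+\langle\sigma_S,\sigma_X\rangle\partial_x\varphi$ and $\widetilde{\a}(id)=|\sigma_S|^2$ computed in Section \ref{E22}, which is exactly your identification of the two PDE equations with \eqref{suffConds1}--\eqref{suffConds2}. The side checks you make ($\widetilde{y}=y\times id\in\Da$, local boundedness of $z$, and continuity of $(X,S)$ so that $z(t,X_{t-},S_{t-})=z(t,X_t,S_t)$) are the same routine points the paper treats as immediate.
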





\section{Explicit solution for F\"ollmer-Schweizer decomposition in the basis risk context}
\label{S4_BSDE}

\setcounter{equation}{0}
\subsection{General considerations}

We will discuss in this section the important 
 F\"ollmer-Schweizer decomposition, denoted shortly F-S decomposition. It is a generalization of the well-known Galtchouk-Kunita-Watanabe decomposition
 for martingales, to the more general case of semimartingales.
Our task  will consist in providing explicit expressions for the F-S decomposition
in several situations.
Let $S$ be a special semimartingale  with
 canonical decomposition $S=M^S+V^S$.
In the sequel we will convene that the space $L^2(M^S)$ consists of the predictable processes $(Z_t)_{t \in [0,T]}$ such that $\E{\int_0^T | Z_u|^2d\langle M^S \rangle_u} < \infty$ and $L^2(V^S)$ will denote the set of all predictable processes $(Z_t)_{t \in [0,T]}$ such that $\E{\left(\int_0^T |Z_u|d\| V^S \|_u\right)^2} < \infty$. The intersection of these two spaces is denoted 
\begin{equation}\label{ETheta}
\Theta:= L^2(M^S) \cap L^2(V^S).
\end{equation}

The F\"ollmer-Schweizer decomposition is defined as follows.
\begin{definition}
\label{FSDefinition}
Let $h$ be a (possibly complex valued)  square integrable $\cF_T$-measurable random variable. We say that $h$ admits an F-S decomposition
 with respect to $S$ if it can be written as
\begin{equation}
\label{FSDec}
h = h_0 + \int_{0}^{T} Z_u d S_u + O_T, \P-a.s.,
\end{equation}
where $h_0$ is an $\cF_0$-measurable r.v., $Z \in \Theta$ and $O = (O_t)_{t\in[0,T]}$
 is a square integrable martingale, strongly orthogonal to $M^S$.
\end{definition}
\begin{remark}\label{RFS}\
\begin{enumerate}[label=\arabic*)]
\item \label{RFS_1} The notion of weak and strong orthogonality is discussed for instance in 
\cite[Section 4.3]{ProtterBook} and \cite[Section 1.4b]{JacodBook}.
 Let $L$ and $N$ be two $\cF_t$-local martingales,
 with null initial value. 
 $L$ and $N$ are said to be {\it strongly orthogonal} if  $LN$ is a local martingale.
 If $L$ and $N$ are locally square integrable, then they are strongly orthogonal if and only if $\langle L,N\rangle=0$.
The definition of locally square integrable martingale is given for instance
just before \cite[Theorem 49 in Chapter 1]{ProtterBook}.
\item \label{RFS_2} The F-S decomposition makes also sense for complex valued square integrable random variable $h$. In that case the triplet $(h_0,Z,O)$ is generally complex.
\item \label{RFS_3} If $h$ admits an F-S decomposition  \eqref{FSDec} then
the complex conjugate   $\bar{h}$ admits an F-S decomposition given by 
\begin{equation}
\label{FSDecBar}
\bar{h} = \bar{h}_0 + \int_{0}^{T} \bar{Z}_u d S_u + \bar{O}_T, \P-a.s.
\end{equation}
\end{enumerate}
\end{remark}

The F-S decomposition has been extensively studied in the literature: sufficient 
conditions on the process $S$ were given so that every square integrable random variable has such a decomposition. A well-known condition 
ensuring the existence of such a decomposition is  
 the so called \textbf{structure condition} (SC).

\begin{definition}
\label{SC}
We say that a special semimartingale $S=V^S+M^S$ satisfies the \textbf{structure condition} (SC) if 
there exists a predictable process $ \alpha$
  such that 
\begin{enumerate}
\item $V^S_t=\int_0^t  \alpha_u d \langle M^S \rangle_u$,
\item $\int_0^T \alpha_u^2 d \langle M^S \rangle_u <  \infty$ a.s.
\end{enumerate}
\end{definition}
The latter quantity plays a central role in the F-S decomposition.
The associated process  
\begin{equation}
\label{MeanVarTradOff}
K_t := \int_0^t \alpha_u^2 d \langle M^S \rangle_u \text{ for } t \in [0,T],
\end{equation}
is called \textbf{mean variance trade-off} process.

\begin{remark}\label{RMS}\
\cite{monat} proved that, under  (SC) and the additional condition that the
 process $K$ is uniformly bounded, the F-S decomposition of any real valued square integrable random variable exists and it is unique.
More recent papers about the subject are \cite{Schweizer2001}, \cite{Cerny} and references therein. 
\end{remark}

This general decomposition  refers to the process $S$ as underlying and
it will be applied in the context of mean-variance hedging
under basis risk, where  $X$ is an observable price process 
 of a non-traded asset. \\
As in  previous sections, we consider a couple $(X,S)$ 
  verifying the martingale problem \eqref{pbMg}, and we suppose Assumption \ref{E0} 
 to be fulfilled. 
In the sequel we do not necessarily assume (SC) for $S$.
\begin{definition} \label{DWeakFS}
Let $h$ be a square integrable $\cF_T$-measurable random variable. We say that $h$ admits a {\bf weak F-S decomposition} with respect to $S$ if it can be written as
\begin{equation}
\label{FSDecWeak}
h = h_0 + \int_{0}^{T} Z_u d S_u + O_T, \P{\rm-a.s.},
\end{equation}
where $h_0$ is an $\cF_0$-measurable r.v., 
$Z$ is a predictable process such that $\int_0^T | Z_u|^2d\langle M^S \rangle_u < \infty$
a.s., $\int_0^T | Z_u|d\Vert V^S \Vert_u < \infty$ a.s.
and $O$ is a local martingale such that $\langle O,M^S \rangle = 0$
with $O_0 = 0$.
\end{definition}
Finding a weak F-S decomposition \eqref{FSDecWeak} $(h_0,Z,O)$
for some r.v. $h$ is equivalent to finding a solution $(Y,Z,O)$
 of 
the BSDE
\begin{equation}
\label{FSasBSDE}
Y_t = h - \int_{t}^{T} Z_u d S_u - (O_T-O_t). 
\end{equation}
The link is given by $Y_0 = h_0$.
Equation \eqref{FSasBSDE} can be seen as a special case of BSDE \eqref{BSDEMgMarkov},
 where the driver $f$ is linear in $z$, of the form 
\begin{equation} \label{BSpecialf}
f(t, x, s, y, z) = -\a(id)(t, x, s) z. 
\end{equation}
This point of view was taken for instance by \cite{FS1994}.

\begin{remark}\label{FS-BSDE}
Let $(Y,Z,O)$ be a solution of \eqref{FSasBSDE} with
$Z \in \Theta$, where $\Theta$ has been defined in \eqref{ETheta}
and $O$ is a square integrable martingale. Then $h$
admits an F-S decomposition \eqref{FSDec} with $Y_0 = h_0$.

\end{remark}

We consider the case of the final value $h = g(X_T,S_T)$ for some continuous function $g$.
Theorem \ref{T19} can be applied to obtain the result below.
\begin{corollary}
\label{FSConditions}
Let $y$  (resp. $z$): $[0,T] \times \cO \rightarrow \C$.
We suppose that the following conditions are verified.
\begin{enumerate}[label=\arabic*)]
\item \label{FSConditions1} $y, \widetilde{y}:=y\times id$ belong to $\Da$. 
\item \label{FSConditions2}
$z$ 
verifies \eqref{ERC43} of Remark \ref{RC33}.
\item  \label{FSConditions3}
 $(y,z)$ solve the problem
\begin{eqnarray} 
\a(y)(t, x, s) &=& \a(id)(t, x, s) z(t, x, s),  \label{FSConds1} \\
\widetilde{\a}(y)(t, x, s) &=& \widetilde\a(id)(t, x, s) z(t, x, s),
\label{FSConds2}
\end{eqnarray}
where the equalities hold in $\cL$, 
with the terminal condition $y(T,.,.) = g(.,.)$.
\end{enumerate}
 Then the triplet $(Y,Z,O)$, where $$ Y_t = y(t,X_{t},S_{t}), \; Z_t = z(t,X_{t-},S_{t-}), \; O_t=Y_t - Y_0 - \int_{0}^{t} Z_u d S_u,$$ is a solution to the linear BSDE \eqref{FSasBSDE} linked to the weak F-S decomposition.
\end{corollary}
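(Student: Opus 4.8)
The plan is to read Corollary \ref{FSConditions} as the specialization of Theorem \ref{T19} to the linear, $y$-independent driver \eqref{BSpecialf}, namely $f(t,x,s,y,z) = -\a(id)(t,x,s)\,z$. First I would rewrite the linear BSDE \eqref{FSasBSDE} in the canonical form \eqref{BSDEMgMarkov}. Using the decomposition $S = M^S + V^S$ together with item \ref{RSpecial_1} of Corollary \ref{RSpecial}, which gives $V^S_t = \int_0^t \a(id)(u,X_{u-},S_{u-})\,dA_u$, one splits $\int_t^T Z_u\,dS_u = \int_t^T Z_u\,dM^S_u + \int_t^T \a(id)(u,X_{u-},S_{u-})\,Z_u\,dA_u$; substituting this into \eqref{FSasBSDE} recovers \eqref{BSDEMgMarkov} precisely with the driver \eqref{BSpecialf}.

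With this choice of $f$ the abstract deterministic system of Theorem \ref{T19} collapses onto the present one. Since $-f(t,x,s,y,z) = \a(id)(t,x,s)\,z$, equation \eqref{suffConds1} becomes $\a(y) = \a(id)\,z$, which is \eqref{FSConds1}, while \eqref{suffConds2} is verbatim \eqref{FSConds2}. Evaluating the canonical orthogonal martingale \eqref{orthMG} at this driver gives $O_t = Y_t - Y_0 - \int_0^t Z_u\,dM^S_u - \int_0^t \a(id)(u,X_{u-},S_{u-})\,Z_u\,dA_u = Y_t - Y_0 - \int_0^t Z_u\,dS_u$, which is exactly the process $O$ of the statement. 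Hence hypotheses \ref{FSConditions1} and \ref{FSConditions3} are precisely the assumptions of Theorem \ref{T19} for this $f$, and I would invoke it to conclude that $(Y,Z,O)$ solves \eqref{BSDEMgMarkov}, equivalently \eqref{FSasBSDE}.

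Two integrability facts then have to be verified to land inside Definition \ref{DefBSDE}. The drift requirement is automatic: by \eqref{FSConds1} we have $|f(u,X_{u-},S_{u-},Y_{u-},Z_u)| = |\a(y)(u,X_{u-},S_{u-})|$, and since $y\in\Da$ by \ref{FSConditions1}, this is $d\norm{A}$-integrable by \eqref{aFinite}. The square-integrability condition $\int_0^T |Z_u|^2\,d\langle M^S\rangle_u < \infty$ a.s. is exactly hypothesis \ref{FSConditions2}: by item \ref{RSpecial_2} of Corollary \ref{RSpecial}, $\langle M^S\rangle_t = \int_0^t \widetilde\a(id)(u,X_{u-},S_{u-})\,dA_u$, so this integral coincides with the left-hand side of \eqref{ERC43}.

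The only genuine subtlety, which I would flag as a point of care rather than a real obstacle, is that Theorem \ref{T19} is stated for locally bounded $z$, whereas here $z$ is assumed only to satisfy \eqref{ERC43}. This gap is closed by item \ref{RC33_1} of Remark \ref{RC33}, which extends Theorem \ref{T19} precisely under \eqref{ERC43}; the linearity of the driver in $z$ and its independence of $y$ make this extension transparent, since $\int Z\,dM^S$ is well defined as soon as \eqref{ERC43} holds and no fixed-point or monotonicity argument in $y$ is needed. The remaining assertions, that $O$ is a local martingale with $\langle O, M^S\rangle = 0$ and $O_0 = 0$, are then delivered directly by Theorem \ref{T19} through \eqref{cond1bis} and \eqref{cond2ter}.
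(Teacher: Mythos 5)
Your proposal is correct and follows exactly the route the paper takes: the paper gives no separate proof of Corollary \ref{FSConditions}, stating only that Theorem \ref{T19} applies with the linear driver \eqref{BSpecialf}, with the local-boundedness hypothesis on $z$ relaxed to \eqref{ERC43} via item \ref{RC33_1} of Remark \ref{RC33}. Your verification of the identification of \eqref{FSasBSDE} with \eqref{BSDEMgMarkov}, of the collapse of the deterministic system \eqref{suffConds1}--\eqref{suffConds2} onto \eqref{FSConds1}--\eqref{FSConds2}, of the form of $O$, and of the two integrability requirements fills in precisely the details the paper leaves implicit.
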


\begin{remark}
\label{RBSDEFS}
We recall that, setting $h_0=y(0, X_0, S_0)$, the triplet $(h_0,Z,O)$ is a candidate for a true F-S decomposition, see Definition \ref{FSDefinition}. Sufficient conditions for this are stated below.

\begin{enumerate}[label=\alph*)]
\item \label{RBSDEFS_1} $h=g(X_T,S_T) \in L^2(\Omega).$
\item \label{RBSDEFS_2} $(z(t, X_{t-}, S_{t-})) \in \Theta$ i.e.
	\begin{itemize}
	\item $\E{\int_0^T \left|z(t, X_{t-}, S_{t-})\right|^2 \widetilde\a(id)(t,X_{t-}, S_{t-}) 
	dA_t } < \infty$.
	\item $\E{\left(\int_0^T \left|z(t, X_{t-}, S_{t-})\right| 
	\Vert \a(id)(t, X_{t-}, S_{t-}) dA \Vert_t \right)^2} < \infty$.
	\end{itemize}
\item \label{RBSDEFS_3} $\left(y(t,X_t,S_t) - \int_0^t \a(y)(u,X_{u-}, S_{u-}) dA_u\right)$ is an $\cF_t$-square integrable martingale. 
\end{enumerate}
\end{remark}
We remark that \ref{RBSDEFS_2} and \ref{RBSDEFS_3} imply by additivity that $O$ is a square integrable martingale. In fact, $\forall t\in[0,T]$
\begin{equation}
\label{E_RBSDEFS}
O_t = y(t,X_t,S_t) -y(0,X_0,S_0)- \int_0^t \a(y)(u,X_{u-}, S_{u-}) dA_u - \int_0^t z(u,X_{u-}, S_{u-}) dM^S_u.
\end{equation}
\subsection{Application: exponential of additive processes}
\label{PAI}

We will investigate in this section a significant context where the equations in Corollary
 \ref{FSConditions} can be solved, yielding the weak F-S decomposition
and we can give sufficient conditions so that the true F-S decomposition is fulfilled.
 We focus on exponential of additive processes.
Another example will be given in Section \ref{S412}.
Let $(X,S)$ be a couple of exponential of semimartingale additive processes, as introduced in Section \ref{expleExpAdd}.

\begin{proposition}\label{PSC}
Under Assumption \ref{A_PAI}, $S$ verifies the (SC) condition given in Definition \ref{SC} if and only if
\begin{equation}
\int_0^T \left(\frac{d\kappa_t(0,1)}{d\rho^S_t}\right)^2 d\rho^S_t <\infty
\end{equation}

In this case, the mean variance trade-off process $K$ is deterministic and given by
\begin{equation}
\label{MVT_PAI}
K_t = \int_0^t \left(\frac{d\kappa_u(0,1)}{d\rho^S_u}\right)^2 d\rho^S_u <\infty, \;\forall t\in[0,T].
\end{equation}
\end{proposition}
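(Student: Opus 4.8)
The plan is to exhibit explicitly the candidate density $\alpha$ for the structure condition and to show that its integrability requirement (the second condition of Definition \ref{SC}) collapses, after cancellation of the factors $S_{u-}$, to the stated deterministic condition, which simultaneously delivers the formula for $K$.

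First I would recall from Corollary \ref{CCS} the canonical decomposition components $V^S_t = \int_0^t S_{u-}\kappa_{du}(0,1)$ and $\langle M^S\rangle_t = \int_0^t S_{u-}^2\,\rho^S_{du}$. Since $S=\exp(Z^2)$ is strictly positive, its left limits $S_{u-}$ are strictly positive as well, so division by $S_{u-}$ is licit. By item \ref{propAddPt5} of Proposition \ref{propAdd}, $\kappa_{du}(0,1)\ll\rho^S_{du}$, hence I can write $\kappa_{du}(0,1)=\frac{d\kappa_u(0,1)}{d\rho^S_u}\rho^S_{du}$ and set
\[
\alpha_u := \frac{1}{S_{u-}}\frac{d\kappa_u(0,1)}{d\rho^S_u}.
\]
A direct substitution then gives $\int_0^t \alpha_u\,d\langle M^S\rangle_u = \int_0^t \frac{1}{S_{u-}}\frac{d\kappa_u(0,1)}{d\rho^S_u}S_{u-}^2\,\rho^S_{du} = \int_0^t S_{u-}\kappa_{du}(0,1)=V^S_t$, so the first condition of Definition \ref{SC} is always satisfied by this $\alpha$, independently of any integrability.

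Next I would treat the equivalence. For the implication from the integral condition to (SC), with the above $\alpha$ I compute pathwise
\[
\int_0^t \alpha_u^2\,d\langle M^S\rangle_u = \int_0^t \frac{1}{S_{u-}^2}\left(\frac{d\kappa_u(0,1)}{d\rho^S_u}\right)^2 S_{u-}^2\,\rho^S_{du} = \int_0^t \left(\frac{d\kappa_u(0,1)}{d\rho^S_u}\right)^2\rho^S_{du};
\]
the factors $S_{u-}$ cancel identically, so this quantity is deterministic and the second condition of Definition \ref{SC} holds if and only if the stated integral is finite. This also identifies the mean variance trade-off process $K_t=\int_0^t\alpha_u^2\,d\langle M^S\rangle_u$ with the announced deterministic expression \eqref{MVT_PAI}. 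For the converse I would use that any predictable $\alpha'$ fulfilling the first condition must coincide with $\alpha$ up to a $d\P\otimes d\langle M^S\rangle$-null set: indeed the continuity of $V^S$ and $\langle M^S\rangle$ forces $\int_0^\cdot(\alpha'_u-\alpha_u)\,d\langle M^S\rangle_u\equiv 0$, whence $\alpha'=\alpha$ $d\langle M^S\rangle$-a.e., so the second condition for $\alpha'$ amounts to finiteness of the same deterministic integral.

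The computation is routine; the only points requiring a little care are the strict positivity of $S_{u-}$, which guarantees that $\alpha$ is well defined, and, for the converse, the essential uniqueness of the density with respect to $d\langle M^S\rangle$, which is precisely what turns a statement about the existence of some $\alpha$ into a statement about the explicit one. I do not expect a genuine obstacle, since both the decomposition of $S$ and the absolute continuity $\kappa_{du}(0,1)\ll\rho^S_{du}$ are already available from Corollary \ref{CCS} and Proposition \ref{propAdd}.
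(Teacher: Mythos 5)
Your proof is correct and follows exactly the route the paper intends: its own proof is a one-line citation of Corollary \ref{CCS} and item \ref{propAddPt5} of Proposition \ref{propAdd}, and what you have written is precisely the expansion of that citation — the candidate density $\alpha_u = S_{u-}^{-1}\,d\kappa_u(0,1)/d\rho^S_u$, the cancellation of the $S_{u-}$ factors making $\int_0^\cdot \alpha_u^2\,d\langle M^S\rangle_u$ deterministic, and the essential uniqueness of the density for the converse. No gaps; the uniqueness argument you supply for the "only if" direction is the natural completion of the paper's terse proof.
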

\begin{proof}
It follows from Corollary \ref{CCS} and item \ref{propAddPt5} of Proposition  \ref{propAdd}.
\end{proof}

We look for the F-S decomposition of an $\cF_T$-measurable random variable $h$ of the form $h:=g(X_T,S_T)$ for a function $g$ such that
\begin{equation} \label{Eh} 
 g(x,s) = \int_{\C^2} d \Pi(z_1, z_2) x^{z_1} s^{z_2},
\end{equation}
 where $\Pi$ is finite Borel complex measure. 
\begin{remark}
This family of random variables includes many examples as,  for example, the call and put options payoffs. Indeed, we have, for $K,s>0$ and arbitrary $0<R<1$,
 $$
 (s-K)^+-s = \frac{1}{2\pi i} \int_{R-i\infty}^{R+i\infty}s^z \frac{K^{1-z}}{z(z-1)}dz.
 $$
 Moreover, for arbitrary $U>0$,
  $$
 (K-s)^+ = \frac{1}{2\pi i} \int_{U-i\infty}^{U+i\infty}s^z \frac{K^{1-z}}{z(z-1)}dz.
 $$
 For more details, see for example \cite{Hubalek2006}, \cite{EberleinGlau2010} and \cite{gor2013variance}.
\end{remark}

In Section \ref{expleExpAdd}, Corollary \ref{ExpAddMgPb} states that $(X,S)$ fulfills the martingale problem with respect to $(\Da, \a, \rho^S)$ where the objects
 $\Da, \a$ and $\rho^S$ were introduced respectively in \eqref{ExpAddADom}, \eqref{ExpAddA}, \eqref{ExpAddRhoS}.
In order to determine the F-S decomposition (in its weak form given in \eqref{DWeakFS})
we make use of Corollary \ref{FSConditions}.
We look for a function $y$  (resp. $z$): $[0,T] \times \R^2 \rightarrow \C$ 
such that Hypotheses \ref{FSConditions1}, \ref{FSConditions2} and \ref{FSConditions3}
are fulfilled. In agreement with  definition of $\Da$  given in \eqref{ExpAddADom}
 we select  $y$  of the form
\begin{equation}\label{Eh1}
 y(t,x,s) = \int_{\C^2} d \Pi(z_1, z_2)x^{z_1} s^{z_2} \lambda(t,z_1,z_2),
\end{equation}
where $\Pi$ being the same finite complex measure as in \eqref{Eh}
and $\lambda:[0,T] \times \C^2 \rightarrow \C$.
We will start by writing ''necessary'' conditions for a couple $(y,z)$, 
such that $y$ has the form \eqref{Eh1}, to be
solutions of  \eqref{FSConds1}.

Suppose that the couple $(y,z)$ fulfills  \eqref{FSConds1} and \eqref{FSConds2} of Corollary \ref{FSConditions}.
We consider the expressions of $ \a(id), \widetilde{\a}(id)$ 
given by \eqref{ExpAddId}, \eqref{ExpAddAlpha},
and $\a(y),  \widetilde{\a}(y)$
 given by \eqref{ExpAddA} and \eqref{ExpAddATilde}, for $f = y$.
We replace them in the two above mentioned conditions \eqref{FSConds1} and \eqref{FSConds2}  
 to obtain the following equations for $\lambda$
($d \rho^S_t$ a.e.).

\begin{equation}
\label{E450}
\begin{aligned}
\int_{\C^2} d \Pi(z_1, z_2) x^{z_1} s^{z_2} \left\lbrace  \dfrac{d\lambda(t,z_1,z_2)}{d\rho^S_t} + \lambda(t,z_1,z_2) \dfrac{d\kappa_t(z_1,z_2)}{d\rho^S_t} \right\rbrace &=& s \dfrac{d\kappa_t(0,1)}{d\rho^S_t} z(t, x, s) 
\\
\int_{\C^2} d \Pi(z_1, z_2) \lambda(t,z_1,z_2) x^{z_1} s^{z_2+1} \dfrac{d\rho_t(z_1, z_2,0,1)}{d\rho^S_t} &=& s^2 z(t, x, s).
\end{aligned}
\end{equation}
The final condition $y(T,\cdot, \cdot) = g(\cdot,\cdot)$ produces
\begin{equation} \label{E450bis}
\int_{\C^2} d \Pi(z_1, z_2) x^{z_1} s^{z_2} \lambda(T,z_1,z_2) = \int_{\C^2} d \Pi(z_1, z_2) x^{z_1} s^{z_2}.
\end{equation}
Replacing $z$ from the second line of \eqref{E450} in the first line, 
by identification of the inverse Fourier-Laplace transform, it follows
that $\lambda$ verifies 
\begin{eqnarray}
\label{lambdaODE}
\dfrac{d\lambda(t,z_1,z_2)}{d\rho^S_t} &=& \lambda(t,z_1,z_2) \left\lbrace  \dfrac{d\kappa_t(0,1)}{d\rho^S_t}\dfrac{d\rho_t(z_1, z_2,0,1)}{d\rho^S_t}  - \dfrac{d\kappa_t(z_1,z_2)}{d\rho^S_t}\right\rbrace \label{lambdaODE1}
 \\
\lambda(T,z_1,z_2) &=& 1,  \label{lambdaODE2}
\end{eqnarray}
for all $(z_1,z_2)\in supp\;\Pi$.
Without restriction of generality we 
can clearly set $\lambda(\cdot, z_1,z_2) = 0$ for
 $(z_1,z_2) $ outside the support of $\Pi$.
We observe that for fixed $z_1, z_2$,  \eqref{lambdaODE}
constitutes an  ordinary differential equation
(in the Lebesgue-Stieltjes sense) in time $t$.

We solve now the linear differential equation  \eqref{lambdaODE}. 
Provided that
\begin{equation} \label{EIntegrability}
 u \mapsto \dfrac{d\rho_u(z_1, z_2,0,1)}{d\rho^S_u} \dfrac{d\kappa_u(0,1)}{d\rho^S_u} \in L^1([0,T], d\rho_s),
\end{equation}
the (unique) solution of \eqref{lambdaODE},
is given by
\begin{eqnarray}
\label{E4600}
\lambda(t,z_1,z_2) &=& \exp \left(  \int_t^T \left[ \dfrac{d\kappa_u(z_1,z_2)}{d\rho^S_u} - \dfrac{d\rho_u(z_1, z_2,0,1)}{d\rho^S_u} \dfrac{d\kappa_u(0,1)}{d\rho^S_u} \right] d\rho^S_u \right) \nonumber\\
 &=& \exp \left(  \int_t^T \kappa_{du}(z_1,z_2) - \dfrac{d\rho_u(z_1, z_2,0,1)}{d\rho^S_u} \kappa_{du}(0,1) \right) \\
 &=& \exp \left(  \int_t^T \eta(z_1,z_2, du) \right)\nonumber ,
\end{eqnarray}
where 
\begin{equation}
\label{eta}
\eta(z_1,z_2, t) := \kappa_{t}(z_1,z_2) - \int_0^t\dfrac{d\rho_u(z_1, z_2,0,1)}{d\rho^S_u} \kappa_{du}(0,1),
\end{equation}
which is clearly absolutely continuous with respect to $d \rho^S$. \\
At this point, we have an explicit form of $\lambda$ defining
 the function $y$ intervening in the weak F-S decomposition.
In the sequel we will show that such a choice of $\lambda$ will constitute a sufficient condition
so that $(y,z)$ where $y$ is defined by \eqref{Eh1} and $z$ determined by the second line of \eqref{E450},
is a solution of the deterministic problem given by \eqref{FSConds1} and \eqref{FSConds2}.
In order to check \eqref{EIntegrability} and the validity of \eqref{E450}
and \eqref{E450bis},
we formulate an hypothesis which
reinforces Assumptions  \ref{A_PAI} and \ref{A_Pi}.
\begin{assumption}
\label{setD}
Recall $I_0:=\textit{{\rm Re}(supp }\Pi\text{)}  (\subset \R^2),$
where we convene that  ${\rm Re}(z_1,z_2) = ({\rm Re}(z_1), {\rm Re}(z_2)). $
We denote $I:= 2I_0\cup \{(0,1)\}$ and $\cD$ the set
\begin{equation}
\cD = \Big\{z\in D, \; \int_0^T \left| \frac{d\kappa_u(z_1,z_2)}{d \rho^S_u}\right|^2 d\rho^S_u < \infty \Big\}.
\end{equation} We assume the validity of the properties below.

\begin{enumerate}[label=\arabic*)]
\item $\rho^S$ is strictly increasing.  \label{setD0} 
\item $I_0$ is bounded.
 \label{setD1}
\item $\forall z\in supp\; \Pi,\; z, z+(0,1) \in \cD$ \label{setD2}. 
\item $\displaystyle\sup_{x\in I} \norm{\frac{d(\kappa_t(x))}{d\rho^S_t}}_{\infty} < \infty$. \label{setD3}
\end{enumerate}
\end{assumption}

\begin{remark}  \label{R411} \
\begin{enumerate}[label=\arabic*)]
\item \label{R411Item1}  Assumptions \ref{A_PAI} and
\ref{A_Pi} are  consequences of Assumption  \ref{setD}. 
\item \label{R411Item2} Taking into account Remark \ref{R226}, we emphasize that,
 for the rest of this section,
the statements would not change if we consider that 
 the quantities integrated with respect to the measure $\Pi$
 are null outside its support.
\item \label{R411Item3} $I \subset \cD$, in particular $(0,1)  \in \cD$ 
because of item \ref{setD3} of Assumption \ref{setD}.
\item \label{R411Item4} By previous item and Proposition \ref{PSC}, 
 $S$ verifies the (SC) condition and the mean variance trade-off process $K$ given by \eqref{MVT_PAI}
is deterministic.
\item \label{R411Item5} $I_0 \subset D/2$   (i.e. $supp\;\Pi \subset D/2$).
This follows again by item \ref{setD3} of Assumption \ref{setD}.
\end{enumerate}
\end{remark}

In the sequel we will introduce a useful  notation.
\begin{equation}\label{Egamma}
\gamma_t(z_1,z_2) := \frac{d\rho_t(z_1,z_2,0,1)}{d\rho^S_t}, \; \forall (z_1,z_2)\in D/2,  t\in[0,T].
\end{equation}

Similarly to \cite[Lemma 3.28]{gor2013variance}, we can show the upper bounds below.
\begin{lemma}\ 
\label{LemmaEta}
Under Assumption \ref{setD}, we have the  properties below hold.
\begin{enumerate}[label=\arabic*)]
\item \label{lambdaWellDefined} Condition \eqref{EIntegrability} is verified for  $t\in[0,T], (z_1,z_2)\in supp\; \Pi$. 
\item \label{pt2LemmaEta} There is  a positive constant $c_1$, such that $d\rho^S_s$ a.e.
 $\displaystyle \sup_{(z_1,z_2)\in I_0+i\R^2} \frac{d {\rm Re}(\eta(z_1,z_2,t))}{d\rho^S_t} \leq c_1$.
\item \label{pt3LemmaEta} There are positive constants $c_2$, $c_3$ such that, $d\rho_s$ a.e.
the following property holds. \\
For any $\displaystyle (z_1,z_2)\in I_0+i\R^2, \; 
\bigg| \gamma_t(z_1,z_2) \bigg|^2 \leq \frac{d\rho_t(z_1,z_2)}{d\rho^S_t} \leq c_2 - c_3 \frac{d {\rm Re}(\eta(z_1,z_2,t))}{d\rho^S_t}$.

\item \label{pt4LemmaEta} $\displaystyle \sup_{(z_1,z_2)\in I_0+i\R^2} - \int_0^T 2 {\rm Re}(\eta(z_1,z_2,dt))\exp \left( \int_t^T 2 {\rm Re}(\eta(z_1,z_2,du)) \right) < \infty$. 
\end{enumerate}
\end{lemma}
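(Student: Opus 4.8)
The plan is to adapt the one-dimensional argument of \cite[Lemma 3.28]{gor2013variance} to the present two-dimensional additive framework, exploiting the explicit angle-bracket formulas of Proposition \ref{ExpAddSqMg} together with the boundedness hypotheses built into Assumption \ref{setD}. I would start with the left-hand inequality of item \ref{pt3LemmaEta}, which is the probabilistic heart of the statement. Since $M^S = M(0,1)$, Proposition \ref{ExpAddSqMg} and Corollary \ref{CCS} give the predictable densities, with respect to $d\rho^S$, of the three relevant brackets, namely
\[
d\langle M(z_1,z_2), M^S\rangle_u = X_{u-}^{z_1} S_{u-}^{z_2+1}\,\gamma_u(z_1,z_2)\,\rho^S_{du}, \quad d\langle M(z_1,z_2)\rangle_u = X_{u-}^{2{\rm Re}(z_1)} S_{u-}^{2{\rm Re}(z_2)}\tfrac{d\rho_u(z_1,z_2)}{d\rho^S_u}\rho^S_{du},
\]
together with $d\langle M^S\rangle_u = S_{u-}^2\,\rho^S_{du}$; for $(z_1,z_2)\in I_0+i\R^2$ all these are genuine square integrable martingales, since $I_0\subset D/2$ by item \ref{R411Item5} of Remark \ref{R411}. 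Applying the complex Kunita--Watanabe inequality to these densities and then dividing by the a.s. strictly positive factor $X_{u-}^{2{\rm Re}(z_1)}S_{u-}^{2{\rm Re}(z_2)+2}$, the random factors cancel and leave the deterministic bound $|\gamma_t(z_1,z_2)|^2 \le \tfrac{d\rho_t(z_1,z_2)}{d\rho^S_t}$, $d\rho^S$-a.e.

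Next I would record the algebraic identity that carries items \ref{pt2LemmaEta} and \ref{pt3LemmaEta}. Since $X,S$ are real and positive we have $\kappa_t(\bar z_1,\bar z_2)=\overline{\kappa_t(z_1,z_2)}$, so definition \eqref{ExpAddRhoS} reads $\rho_t(z_1,z_2)=\kappa_t(2{\rm Re}(z_1),2{\rm Re}(z_2))-2{\rm Re}(\kappa_t(z_1,z_2))$, while differentiating \eqref{eta} gives $\tfrac{d{\rm Re}(\eta(z_1,z_2,t))}{d\rho^S_t}={\rm Re}\big(\tfrac{d\kappa_t(z_1,z_2)}{d\rho^S_t}\big)-{\rm Re}\big(\gamma_t(z_1,z_2)\tfrac{d\kappa_t(0,1)}{d\rho^S_t}\big)$. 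Adding these, the terms ${\rm Re}(\tfrac{d\kappa_t(z_1,z_2)}{d\rho^S_t})$ cancel and one obtains
\[
\frac{d\rho_t(z_1,z_2)}{d\rho^S_t} + 2\frac{d{\rm Re}(\eta(z_1,z_2,t))}{d\rho^S_t} = \frac{d\kappa_t(2{\rm Re}(z_1),2{\rm Re}(z_2))}{d\rho^S_t} - 2{\rm Re}\Big(\gamma_t(z_1,z_2)\frac{d\kappa_t(0,1)}{d\rho^S_t}\Big).
\]
Because $(2{\rm Re}(z_1),2{\rm Re}(z_2))\in 2I_0$ and $(0,1)$ both belong to $I$, item \ref{setD3} of Assumption \ref{setD} bounds the two $d\kappa/d\rho^S$ densities by a constant $C$, uniformly in the imaginary parts; the cross term is tamed by Young's inequality and the bound just proved, $2|\gamma_t|\,C \le \tfrac12|\gamma_t|^2 + 2C^2 \le \tfrac12\tfrac{d\rho_t(z_1,z_2)}{d\rho^S_t}+2C^2$. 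Substituting, I get $\tfrac12\tfrac{d\rho_t(z_1,z_2)}{d\rho^S_t}+2\tfrac{d{\rm Re}(\eta)}{d\rho^S_t}\le C+2C^2$, which decouples the two unknowns: dropping the nonnegative $\rho$-term yields item \ref{pt2LemmaEta} with $c_1=\tfrac12(C+2C^2)$, while keeping it and rearranging yields item \ref{pt3LemmaEta} with, e.g., $c_2=2C+4C^2$ and $c_3=4$.

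For item \ref{lambdaWellDefined} I would simply invoke the Cauchy--Schwarz inequality in $L^2([0,T],d\rho^S)$: the map $u\mapsto\gamma_u(z_1,z_2)$ lies in $L^2(d\rho^S)$ because $\int_0^T|\gamma_u|^2 d\rho^S_u \le \int_0^T \tfrac{d\rho_u(z_1,z_2)}{d\rho^S_u}d\rho^S_u=\rho_T(z_1,z_2)<\infty$ by item \ref{pt3LemmaEta}, whereas $u\mapsto\tfrac{d\kappa_u(0,1)}{d\rho^S_u}$ lies in $L^2(d\rho^S)$ since $(0,1)\in\cD$ by item \ref{R411Item3} of Remark \ref{R411}; the product is then $\rho^S$-integrable, which is precisely \eqref{EIntegrability}. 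Item \ref{pt4LemmaEta} is, finally, an exact computation rather than an estimate: setting $g(t):=2{\rm Re}(\eta(z_1,z_2,t))$, a continuous bounded-variation function with $g(0)=0$ (continuity comes from that of $\rho^S$ via Proposition \ref{propAdd}, and finiteness of $g(T)$ from item \ref{lambdaWellDefined}), the Stieltjes chain rule $d(e^{-g})=-e^{-g}\,dg$ gives $\int_0^T e^{-g(t)}\,dg(t)=1-e^{-g(T)}$, so the quantity equals $-e^{g(T)}\big(1-e^{-g(T)}\big)=1-e^{2{\rm Re}(\eta(z_1,z_2,T))}\le 1$, uniformly in $(z_1,z_2)$.

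I expect the main obstacle to be the first step: setting up the complex Kunita--Watanabe inequality correctly and checking that the predictable, strictly positive factors $X_{u-}^{\cdot}S_{u-}^{\cdot}$ cancel cleanly, so that the surviving inequality is genuinely deterministic and holds $d\rho^S$-a.e. A second delicate point is the apparent circularity between items \ref{pt2LemmaEta} and \ref{pt3LemmaEta}, since both mix $d\rho/d\rho^S$ and $d{\rm Re}(\eta)/d\rho^S$; it is the Young's-inequality split, absorbing $|\gamma_t|$ into $\tfrac12\tfrac{d\rho_t(z_1,z_2)}{d\rho^S_t}$, that breaks it and produces two one-sided bounds with constants depending only on the real parts, hence uniform in the imaginary parts as required.
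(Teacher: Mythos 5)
Your proof is correct, and it is considerably more self-contained than the paper's: the paper writes out only item \ref{lambdaWellDefined} and refers to \cite[Lemma 3.28]{gor2013variance} for items \ref{pt2LemmaEta}--\ref{pt4LemmaEta}, so your argument is essentially the worked-out two-dimensional adaptation that the authors leave to the reader. Two substantive differences are worth recording. For item \ref{lambdaWellDefined}, the paper gets $\gamma_\cdot(z_1,z_2)\in L^2(d\rho^S)$ from the decomposition $\rho_u(z_1,z_2,0,1)=\kappa_u(z_1,z_2+1)-\kappa_u(z_1,z_2)-\kappa_u(0,1)$ and item \ref{setD2} of Assumption \ref{setD} (i.e. $z,\;z+(0,1)\in\cD$), finishing with the same Cauchy--Schwarz step as you; you obtain the $L^2$ bound instead from the Kunita--Watanabe inequality of item \ref{pt3LemmaEta} together with $\rho_T(z_1,z_2)<\infty$, which makes item \ref{setD2} of Assumption \ref{setD} superfluous for this point (only item \ref{setD3} and $(0,1)\in\cD$ intervene) and gives the integrability on all of $I_0+i\R^2$, which is in any case needed so that $\eta$ is well defined there for items \ref{pt2LemmaEta}--\ref{pt4LemmaEta}. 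For item \ref{pt4LemmaEta}, your Stieltjes chain-rule computation turns the supremum into the exact value $1-e^{2{\rm Re}(\eta(z_1,z_2,T))}\le 1$, an identity rather than an estimate, which is cleaner than any bound obtained by majorizing the integrand. Two small points should be made explicit to be fully rigorous: the nonnegativity of $d\rho_t(z_1,z_2)/d\rho^S_t$, which you use to decouple items \ref{pt2LemmaEta} and \ref{pt3LemmaEta}, is item \ref{propAddPt4} of Proposition \ref{propAdd}; and the Kunita--Watanabe argument produces, for each fixed $(z_1,z_2)$, a $d\rho^S$-null exceptional set that a priori depends on $(z_1,z_2)$, whereas the lemma asserts a single null set valid simultaneously for all $(z_1,z_2)\in I_0+i\R^2$, so one should fix versions of the densities that are continuous in $(z_1,z_2)$, obtain the inequality on a countable dense set of parameters and conclude by continuity --- the same refinement that is implicit in the cited one-dimensional lemma.
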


\begin{proof} 
For illustration we prove item \ref{lambdaWellDefined}, the other points can be shown
by similar techniques as in \cite[Lemma 3.28]{gor2013variance}. \\
Let $t\in[0,T], (z_1,z_2)\in supp\; \Pi$. Condition 
\eqref{EIntegrability} is valid
since $(0,1)\in \cD$, $z, z+(0,1) \in \cD$ and 
$$\left(\int_0^t\left|\dfrac{d\rho_u(z_1, z_2,0,1)}{d\rho^S_u}
\frac{d\kappa_{u}(0,1)}{d\rho^S_u} \right| \rho^S_{du}\right)^2 \leq 
\int_0^t\left|\frac{d\rho_u(z_1, z_2,0,1)}{d\rho^S_u}\right|^2 \rho^S_{du}
\int_0^t\left|\frac{d\kappa_{u}(0,1)}{d\rho^S_u} \right|^2 \rho^S_{du}.$$
\end{proof}
Now, we can state a proposition that gives indeed the 
weak F-S decomposition of  a random variable $h= g(X_T,S_T)$.

\begin{proposition}
\label{FSExpAdd}
We suppose the validity of Assumption
 \ref{setD}.
Let $\lambda$ be defined as
\begin{equation}
\label{lambda}
\lambda(t,z_1,z_2) = \exp \left(  \int_t^T \eta(z_1,z_2, du) \right),
\forall (z_1,z_2) \in D/2,
\end{equation}
where $\eta$ has been defined at \eqref{eta}.
 Then $(Y,Z,O)$ is a solution of the BSDE \eqref{FSasBSDE}, where 
\begin{eqnarray*}
Y_t &=& \int_{\C^2} d \Pi(z_1, z_2) X_{t}^{z_1} S_{t}^{z_2} \lambda(t,z_1,z_2) \\
Z_t &=& \int_{\C^2} d \Pi(z_1, z_2) X_{t-}^{z_1} S_{t-}^{z_2-1} \lambda(t,z_1,z_2) \gamma_t(z_1,z_2) \\
O_t &=& Y_t - Y_0 - \int_{0}^{t} Z_u d S_u,
\end{eqnarray*} 
recalling that $\gamma$ has been defined in \eqref{Egamma}.
\end{proposition}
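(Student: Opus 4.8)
The plan is to reduce everything to Corollary \ref{FSConditions}, which already guarantees that the triplet $(Y,Z,O)$ with $O_t=Y_t-Y_0-\int_0^t Z_u\,dS_u$ solves the linear BSDE \eqref{FSasBSDE} as soon as three facts are checked: (1) $y$ and $\widetilde y=y\times id$ belong to $\Da$; (2) $z$ satisfies \eqref{ERC43}; and (3) the pair $(y,z)$ solves \eqref{FSConds1}--\eqref{FSConds2} with the terminal condition $y(T,\cdot,\cdot)=g$. Here $y(t,x,s)=\int_{\C^2}d\Pi(z_1,z_2)\,x^{z_1}s^{z_2}\lambda(t,z_1,z_2)$ with $\lambda$ given by \eqref{lambda}, and $z$ is the function whose evaluation produces the announced $Z$. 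Thus the whole argument is a verification, and the genuine work lies in the integrability bookkeeping, for which Lemma \ref{LemmaEta} has been tailored.

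First I would show $y\in\Da$. Since $\eta(z_1,z_2,\cdot)$ is absolutely continuous with respect to $\rho^S$ (as noted after \eqref{eta}), so is $\lambda(\cdot,z_1,z_2)$, giving \eqref{CondLambda0}. Condition \eqref{CondLambda1} follows from the uniform upper bound on ${\rm Re}\,\eta$ in item \ref{pt2LemmaEta} of Lemma \ref{LemmaEta}, which controls $|\lambda|^2=\exp(2\int_t^T {\rm Re}\,\eta(du))$ uniformly on $\mathrm{supp}\,\Pi$; and \eqref{CondLambda3} follows by using \eqref{lambdaODE} to rewrite $\frac{d\lambda}{d\rho^S}+\lambda\frac{d\kappa(z_1,z_2)}{d\rho^S}=\lambda\frac{d\kappa(0,1)}{d\rho^S}\gamma_t(z_1,z_2)$ and then bounding via item \ref{pt3LemmaEta} of the same Lemma together with item \ref{setD3} of Assumption \ref{setD}. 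Assumption \ref{A_Pi} for $\Pi$ holds because $I_0$ is bounded (item \ref{setD1} of Assumption \ref{setD}) and $I_0\subset D$ (Remark \ref{R411}). For $\widetilde y$ I note that $\widetilde y(t,x,s)=\int_{\C^2}d\Pi(z_1,z_2)\,x^{z_1}s^{z_2+1}\lambda(t,z_1,z_2)$ is again of the form \eqref{DFLaplace}, relative to $\Pi$ pushed forward by $(z_1,z_2)\mapsto(z_1,z_2+1)$; the shifted support has real part $I_0+(0,1)$, still bounded and contained in $D$ by item \ref{setD2} of Assumption \ref{setD} (since $z+(0,1)\in\cD\subset D$), so Assumption \ref{A_Pi} holds for the shifted measure and the same estimates give $\widetilde y\in\Da$.

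Once $y,\widetilde y\in\Da$, the operators $\a(y)$, $\a(id)$, $\widetilde\a(y)$, $\widetilde\a(id)$ are given explicitly by \eqref{ExpAddA}, \eqref{ExpAddId}, \eqref{ExpAddATilde}, \eqref{ExpAddAlpha}, so identities \eqref{FSConds1}--\eqref{FSConds2} become a direct substitution. Identity \eqref{FSConds2} holds by the very definition of $z$: since $\widetilde\a(id)=s^2$, dividing $\widetilde\a(y)$ by $s^2$ reproduces $Z_t=z(t,X_{t-},S_{t-})$. For \eqref{FSConds1}, substituting $z$ into $\a(id)z$ and comparing integrands against $\Pi$ with $\a(y)$ reduces the identity to the requirement that, for each $(z_1,z_2)\in\mathrm{supp}\,\Pi$, $\lambda$ solve the scalar equation \eqref{lambdaODE} $d\rho^S$-a.e.; this holds by construction, as \eqref{lambda} is precisely the solution \eqref{E4600} of \eqref{lambdaODE} normalised by $\lambda(T,\cdot)=1$. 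That normalisation also yields $y(T,x,s)=\int d\Pi\,x^{z_1}s^{z_2}=g(x,s)$, i.e. the terminal condition. I emphasise that this direction needs no injectivity of the Fourier--Laplace transform: equality of the integrands is enough for equality of the integrals.

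The remaining point, and the step I expect to be the main obstacle, is condition \eqref{ERC43} for $z$, that is $\int_0^T|Z_u|^2\,d\langle M^S\rangle_u<\infty$ a.s. I would prove the stronger statement that this has finite expectation. Using $d\langle M^S\rangle_u=S_{u-}^2\,d\rho^S_u$ from Corollary \ref{CCS}, one has $|Z_u|^2\,d\langle M^S\rangle_u=|S_{u-}z|^2\,d\rho^S_u=\bigl|\int d\Pi\,X_{u-}^{z_1}S_{u-}^{z_2}\lambda\gamma\bigr|^2 d\rho^S_u$; Cauchy--Schwarz in $|\Pi|$, Fubini, and the moment bound of item \ref{propAddPt3} of Proposition \ref{propAdd} (applicable since $I_0\subset D/2$ by Remark \ref{R411}, hence $2I_0\subset D$) reduce finiteness to $\int_0^T d\rho^S_u\int d|\Pi|\,|\lambda|^2|\gamma|^2<\infty$. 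This last integral is controlled by items \ref{pt3LemmaEta} and \ref{pt4LemmaEta} of Lemma \ref{LemmaEta}, which bound $|\gamma_t(z_1,z_2)|^2$ by $c_2-c_3\,\frac{d{\rm Re}\,\eta}{d\rho^S}$ and bound $-\int_0^T 2{\rm Re}\,\eta(dt)\exp(\int_t^T 2{\rm Re}\,\eta(du))$ uniformly on $\mathrm{supp}\,\Pi$. With all three hypotheses of Corollary \ref{FSConditions} verified, that corollary delivers the claimed solution $(Y,Z,O)$ of \eqref{FSasBSDE}. The real difficulty throughout is therefore not the algebra but these uniform integrability estimates, all of which have been packaged into Lemma \ref{LemmaEta}.
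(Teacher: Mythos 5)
Your proposal is correct and follows essentially the same route as the paper's proof: reduction to Corollary \ref{FSConditions}, verification that $y,\widetilde y\in\Da$ via the bounds packaged in Lemma \ref{LemmaEta}, definition of $z$ through $\widetilde{\a}(y)=s^2z$ together with the ODE \eqref{lambdaODE} for $\lambda$, and an $L^2$ estimate for $Z$ (the paper's Lemma \ref{LemmaZInt}) to get \eqref{ERC43}. The one point you compress more than the paper is the check of \eqref{CondLambda3} for $\widetilde y$: ``the same estimates'' in fact require the identity $\kappa_t(z_1,z_2+1)=\kappa_t(z_1,z_2)+\kappa_t(0,1)+\rho_t(z_1,z_2,0,1)$ to split the shifted integrand into three terms, each handled exactly by the bounds you cite (item \ref{setD3} of Assumption \ref{setD} and items \ref{pt3LemmaEta}--\ref{pt4LemmaEta} of Lemma \ref{LemmaEta}), so nothing essential is missing.
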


\begin{proof}
The result will follow from Corollary \ref{FSConditions}
for which we need to check the assumptions. \\
First we prove that the function $y$ defined by \eqref{Eh1},
 where $\lambda$ is defined in \eqref{lambda}, is indeed an element of $\Da$.
Secondly, we prove that the associated $\widetilde y$ also belongs to $\Da$.
Third, we check Condition \eqref{ERC43} for $z$.
Finally we need to check the validity of the system of equations
\eqref{FSConds1} and \eqref{FSConds2}. 
\\
Concerning  $y$,
 the function $\lambda(\cdot,z_1,z_2)$ is well-defined for
$ (z_1,z_2) \in supp\; \Pi$, thanks
 to point \ref{lambdaWellDefined} of Lemma \ref{LemmaEta} and
by definition we have
 $\lambda(dt, z_1, z_2) \ll \rho^S_{dt}, \quad \forall(z_1,z_2)\in D$,
which is Condition \eqref{CondLambda0}.
\\
In order to prove that $y \in \Da$, which was defined in 
\eqref{ExpAddADom},
 it remains
 to prove Conditions \eqref{CondLambda1} and \eqref{CondLambda3} of Theorem \ref{T17}.
Let $t\in[0,T],\;(z_1,z_2)\in D/2$. By \eqref{lambda}, we have
$$\left|\lambda(t,z_1,z_2)\right|= \exp\left( \int_t^T \frac{d{\rm Re}(\eta(z_1,z_2,u))}{d\rho^S_u} \rho^S_{du}\right),$$
which implies, by item \ref{pt2LemmaEta} of Lemma \ref{LemmaEta}, that
\begin{equation}
\label{lambdaBounded}
|\lambda(t,z_1,z_2)| \leq \exp\left(c_1 \rho^S_T\right),
\end{equation}
which gives in particular  \eqref{CondLambda1}: in fact
$\int_{\C^2}d|\Pi|(z_1,z_2) |\lambda(t,z_1,z_2)|^2 \leq e^{2 c_1 \rho^S_T} |\Pi|(\C^2) < \infty$.

Finally, to conclude that $y\in \Da$, we need to show  \eqref{CondLambda3}.
By construction, $\lambda$ verifies equation \eqref{lambdaODE1}.
 Hence, by \eqref{lambdaODE1} and Cauchy-Schwarz
 we get
\begin{align}
\label{E4250}
\begin{split}
\Big(\int_0^T d\rho^S_t \Big|  \frac{d\lambda(t,z_1,z_2)}{d\rho^S_t} &+ \lambda(t,z_1,z_2) \frac{d\kappa_t(z_1,z_2)}{d\rho^S_t}\Big| \Big)^2 \\
&= \left(\int_0^T d\rho^S_t \left| \lambda(t,z_1,z_2)\right|\left|\frac{d\kappa_t(0,1)}{d\rho^S_t}\frac{d\rho_t(z_1, z_2,0,1)}{d\rho^S_t} \right|\right)^2 \\
&\leq   \int_0^T  \left| \lambda(t,z_1,z_2)\right|^2\left|\gamma_t(z_1,z_2) \right|^2 d\rho^S_t \int_0^T \left|\frac{d\kappa_t(0,1)}{d\rho^S_t}\right|^2 d\rho^S_t\\
&\leq  \left( I_1(z_1,z_2) + I_2(z_1,z_2)\right) \int_0^T \left|\frac{d\kappa_t(0,1)}{d\rho^S_t}\right|^2 d\rho^S_t ,
\end{split}
\end{align}
with
\begin{align}
\label{EI12}
\begin{split}
I_1(z_1,z_2) &:= c_2 \int_0^T \left| \lambda(t,z_1,z_2) \right|^2 d\rho^S_t \\
I_2(z_1,z_2) &:= -c_3 \int_0^T \left| \lambda(t,z_1,z_2) \right|^2  \frac{d {\rm Re}(\eta(z_1,z_2,t))}{d\rho_t} d\rho^S_t
\end{split}
\end{align}
where we have used  item \ref{pt3LemmaEta} of Lemma \ref{LemmaEta}.
Since $\lambda$ is uniformly bounded, see \eqref{lambdaBounded}, we have 
\begin{equation}
\label{I_1Bounded}
I_1(z_1,z_2)\leq c_2 \rho^S_T \exp\left(2c_1 \rho^S_T\right).
\end{equation}
On the other hand, 
\begin{align}
\label{I_2Bounded}
\begin{split}
I_2(z_1,z_2) &= -c_3 \int_0^T {\rm Re}(\eta(z_1,z_2,dt)) \exp \left( \int_t^T 2 {\rm Re}(\eta(z_1,z_2,du))\right) \\
&\leq  c_3 \displaystyle \sup_{y\in I_0+i\R^2} - \int_0^T {\rm Re}(\eta(y_1,y_2,dt))\exp \left( \int_t^T 2 {\rm Re}(\eta(y_1,y_2,du)) \right),
\end{split}
\end{align}
which is finite by item \ref{pt4LemmaEta} of Lemma \ref{LemmaEta}.
 Integrating \eqref{E4250} with respect to $\vert \Pi \vert$,
 taking into account
  the two uniform bounds in $(z_1,z_2)$, i.e.
 \eqref{I_1Bounded} and \eqref{I_2Bounded}, 
we can conclude to the validity of 
\eqref{CondLambda3}, so that  $y\in \Da$.


We show similarly that $\widetilde y:=y\times id\in \Da$. In fact, 
for $t\in[0,T]$ and $x,s>0$, we have
\begin{equation*}
\widetilde y(t,x,s) = \int_{\C^2} d \Pi(z_1, z_2) x^{z_1} s^{z_2+1} \lambda(t,z_1,z_2) \quad
= \int_{\C^2} d \widetilde\Pi(z_1, z_2) x^{z_1} s^{z_2} \widetilde \lambda(t,z_1,z_2),
\end{equation*}
where $\widetilde \lambda(t,z_1,z_2) = \lambda(t,z_1,z_2-1)$
and $\widetilde\Pi$ is the Borel complex measure defined by 
$$
\int_{\C^2} d \widetilde\Pi(z_1, z_2) \varphi(z_1,z_2) = \int_{\C^2} d \Pi(z_1, z_2) \varphi(z_1,z_2+1), 
$$ for every bounded measurable function $\varphi$. 
Hence, $ supp\; \widetilde{\Pi} =  supp\; \Pi + (0,1) $.
 By  \ref{R411Item1} and \ref{R411Item5} in Remark \ref{R411}, we have
 $(0,1)\in D/2$ and $ supp\; \Pi \subset D/2$.
 Then, by Remark \ref{R23}, $ supp\; \widetilde{\Pi}\subset D$, so that Assumption \ref{A_Pi} is verified for $\widetilde{\Pi}$. Moreover, by definition of $\widetilde \Pi$, the conditions \eqref{CondLambda0} and \eqref{CondLambda1} are 
fulfilled replacing $\Pi$ and $\lambda$ with $\widetilde \Pi$ and
 $\widetilde \lambda$. In order to conclude that $\widetilde y\in \Da$, we need to show 
\begin{equation} \label{ECondb}
A := \int_0^T d\rho^S_t \int_{\C^2}d|\widetilde\Pi|(z_1,z_2)  \left|  \dfrac{d\lambda(t,z_1,z_2-1)}{d\rho^S_t} + \lambda(t,z_1,z_2-1) \dfrac{d\kappa_t(z_1,z_2)}{d\rho^S_t}\right| < \infty,
\end{equation}
  which corresponds to Condition \eqref{CondLambda3}
 for  $\Pi$ and $\lambda$ replaced by $\widetilde \Pi$ and  $\widetilde \lambda$.
 Notice that
\begin{align*}
\begin{split}
A &=  \int_0^T d\rho^S_t \int_{\C^2}d|\Pi|(z_1,z_2)  \left|  \dfrac{d\lambda(t,z_1,z_2)}{d\rho^S_t} + \lambda(t,z_1,z_2) \dfrac{d\kappa_t(z_1,z_2+1)}{d\rho^S_t}\right| \\
&=  \int_0^T d\rho^S_t \int_{\C^2}d|\Pi|(z_1,z_2)  \left|  \dfrac{d\lambda(t,z_1,z_2)}{d\rho^S_t} + \lambda(t,z_1,z_2) \left(\dfrac{d\rho_t(z_1,z_2,0,1)}{d\rho^S_t} + 
\dfrac{d\kappa_t(z_1,z_2)}{d\rho^S_t} + \dfrac{d\kappa_t(0,1)}{d\rho^S_t}\right)\right| \\
&\leq  A_1 + A_2+A_3,
\end{split}
\end{align*}
where
\begin{eqnarray*}
A_1 &:=& \int_0^T d\rho^S_t \int_{\C^2}d|\Pi|(z_1,z_2)  \left|  \dfrac{d\lambda(t,z_1,z_2)}{d\rho^S_t} + \lambda(t,z_1,z_2)\dfrac{d\kappa_t(z_1,z_2)}{d\rho^S_t}\right|, \\
A_2 &:=& \int_0^T d\rho^S_t \int_{\C^2}d|\Pi|(z_1,z_2)  \left|\lambda(t,z_1,z_2) \dfrac{d\kappa_t(0,1)}{d\rho^S_t} \right|, \\
A_3 &:=&  \int_0^T d\rho^S_t \int_{\C^2}d|\Pi|(z_1,z_2)  \left|\lambda(t,z_1,z_2) \dfrac{d\rho_t(z_1,z_2,0,1)}{d\rho^S_t} \right|.
\end{eqnarray*}
The first term $A_1$ is finite, since we already proved that  $y\in \Da$
and so condition \eqref{CondLambda3} is fulfilled. Moreover
\begin{eqnarray*}
A_2 &\leq & \norm{\dfrac{d\kappa_t(0,1)}{d\rho^S_t}}_{\infty} \int_0^T d\rho^S_t \int_{\C^2}d|\Pi|(z_1,z_2)  \left|\lambda(t,z_1,z_2)\right|.
\end{eqnarray*} 

The right-hand side is finite, thanks to point \ref{setD3}
 of Assumption \ref{setD} and the fact that $\lambda$ is uniformly bounded.

Finally, by Cauchy-Schwarz and item \ref{pt3LemmaEta} of Lemma \ref{LemmaEta}, taking into 
account Notation \eqref{Egamma}, by similar arguments as \eqref{E4250}, 
we have
\begin{eqnarray*}
\left(A_3\right)^2 &\leq & |\Pi|(\C^2)\rho^S_T \int_{\C^2}d|\Pi|(z_1,z_2)\int_0^T d\rho^S_t   \left|\lambda(t,z_1,z_2)\right|^2 
 \vert \gamma_t(z_1,z_2) \vert^2 \\
&\leq & |\Pi|(\C^2)\rho^S_T \int_{\C^2} d|\Pi|(z_1,z_2) (I_1(z_1,z_2) + I_2(z_1,z_2)),
\end{eqnarray*} where
$I_1(z_1,z_2)$ and $I_2(z_1,z_2)$ have been defined in \eqref{EI12}.
We have already shown in \eqref{I_1Bounded} and \eqref{I_2Bounded} that $I_1$ and $I_2$ are bounded on $ supp\; \Pi$, hence $A_3 <\infty$. In conclusion,
 it follows indeed that $\widetilde y \in \Da$ and 
Hypothesis \ref{FSConditions1}  of Corollary \ref{FSConditions} 
is verified.
We define $(t,x,s) \mapsto z(t,x,s)$ so that
 $s^2 z(t,x,s) = \widetilde \a(y)(t,x,s)$. This gives
\begin{equation}
\label{zExpAdd}
z(t,x,s) = \int_{\C^2} d \Pi(z_1, z_2) x^{z_1} s^{z_2-1} \lambda(t,z_1,z_2) \gamma_t(z_1,z_2), \; \forall t\in[0,T],\; x,s>0,
\end{equation}
Lemma \ref{LemmaZInt} below shows that \eqref{ERC43} is fulfilled 
and so Hypothesis \ref{FSConditions2} of Corollary \ref{FSConditions} 
 is verified.  

\medskip

 We go on verifying Hypothesis  \ref{FSConditions3} of Corollary
\ref{FSConditions}, i.e. the validity of  \eqref{E450} and 
\eqref{E450bis}.
Condition
 \eqref{E450bis} is straightforward since 
$\lambda(T,\cdot,\cdot) = 1$.
 The second equality in \eqref{E450} takes place by definition of $z$.
The first equality  holds true
integrating  \eqref{lambdaODE} 
thanks to 
 \eqref{CondLambda3}.
This proves   \ref{FSConditions3} of Corollary  \ref{FSConditions}.

Finally Corollary \ref{FSConditions}
implies that
 $(Y,Z,O)$, is a solution of the BSDE \eqref{FSasBSDE}
provided we establish a lemma.
\end{proof}
\begin{lemma}
\label{LemmaZInt}
Let $z$ be as in \eqref{zExpAdd}, where $\lambda, \gamma$ have
been respectively defined in \eqref{lambda} and \eqref{Egamma}. We have 
$$ \E{\int_0^T \left|z (u, X_{u-}, S_{u-})\right|^2 S^2_{u-} 
  \rho^S_{du} } < \infty. $$
In particular \eqref{ERC43} is fulfilled.
\end{lemma}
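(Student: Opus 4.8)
The plan is to reduce everything to the two uniform bounds \eqref{I_1Bounded} and \eqref{I_2Bounded} already established in the proof of Proposition \ref{FSExpAdd}. First I would rewrite the integrand: by \eqref{zExpAdd}, since $S_{u-}>0$,
\[
z(u,X_{u-},S_{u-})\,S_{u-} = \int_{\C^2} d\Pi(z_1,z_2)\, X_{u-}^{z_1} S_{u-}^{z_2}\,\lambda(u,z_1,z_2)\,\gamma_u(z_1,z_2),
\]
so the quantity to estimate is $\E{\int_0^T \bigl|\int_{\C^2} d\Pi\, X_{u-}^{z_1}S_{u-}^{z_2}\lambda\gamma\bigr|^2 \rho^S_{du}}$. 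Applying the Cauchy--Schwarz inequality with respect to the finite measure $|\Pi|$, and using $|X_{u-}^{z_1}S_{u-}^{z_2}|^2 = X_{u-}^{2{\rm Re}(z_1)}S_{u-}^{2{\rm Re}(z_2)}$, I obtain for each $u$
\[
\bigl| z(u,X_{u-},S_{u-})S_{u-}\bigr|^2 \le |\Pi|(\C^2)\int_{\C^2} d|\Pi|(z_1,z_2)\, X_{u-}^{2{\rm Re}(z_1)} S_{u-}^{2{\rm Re}(z_2)}\, |\lambda(u,z_1,z_2)|^2\, |\gamma_u(z_1,z_2)|^2.
\]

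Since all terms are nonnegative, Tonelli's theorem lets me interchange $\E{\cdot}$, $\int_0^T \rho^S_{du}$ and $\int_{\C^2} d|\Pi|$. Because $\rho^S$ and $\kappa$ are continuous (item \ref{propAddPt2} of Proposition \ref{propAdd}), the left limits may be replaced by the values $\rho^S$-a.e., so $\E{X_{u-}^{2{\rm Re}(z_1)}S_{u-}^{2{\rm Re}(z_2)}} = e^{\kappa_u(2{\rm Re}(z_1),2{\rm Re}(z_2))}$. By Remark \ref{R411} item \ref{R411Item5} one has $2I_0 \subset D$, and item \ref{propAddPt3} of Proposition \ref{propAdd} then bounds this exponential by the finite constant $C := \sup_{(a,b)\in 2I_0}\sup_{u\le T} e^{\kappa_u(a,b)}$, uniformly in $(z_1,z_2)\in supp\;\Pi$ and $u\le T$. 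Thus the estimate reduces to showing that $\int_{\C^2} d|\Pi|(z_1,z_2)\int_0^T |\lambda(u,z_1,z_2)|^2|\gamma_u(z_1,z_2)|^2\rho^S_{du}$ is finite.

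For the remaining time integral I would invoke item \ref{pt3LemmaEta} of Lemma \ref{LemmaEta}, which gives $|\gamma_u(z_1,z_2)|^2 \le c_2 - c_3\,\frac{d{\rm Re}(\eta(z_1,z_2,u))}{d\rho^S_u}$; hence the inner integral is dominated exactly by $I_1(z_1,z_2)+I_2(z_1,z_2)$ with $I_1,I_2$ as in \eqref{EI12}. These were already shown to be bounded uniformly on $supp\;\Pi$ in \eqref{I_1Bounded} and \eqref{I_2Bounded}, so integrating against the finite measure $|\Pi|$ closes the argument, and collecting the constants yields the claimed finiteness. Finally, since $A=\rho^S$ and $\widetilde\a(id)(u,X_{u-},S_{u-}) = S_{u-}^2$ by \eqref{ExpAddAlpha}, the finiteness of $\E{\int_0^T |z|^2 S_{u-}^2\,\rho^S_{du}}$ is precisely integrability \eqref{ERC43} in expectation, which in particular forces the almost sure finiteness required there.

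The only real subtlety is bookkeeping rather than analysis: recognizing that after Cauchy--Schwarz and Tonelli the time integral collapses onto the very quantities $I_1+I_2$ controlled in Lemma \ref{LemmaEta}, and justifying the passage from left limits to values through the continuity of $\rho^S$ and $\kappa$. All the genuine analytic difficulty has already been absorbed into Lemma \ref{LemmaEta}, so this lemma is essentially an assembly of previously proved estimates.
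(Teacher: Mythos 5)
Your proof is correct and follows essentially the same route as the paper's: Cauchy--Schwarz with respect to $|\Pi|$, Fubini/Tonelli, the uniform moment bound from item \ref{propAddPt3} of Proposition \ref{propAdd} (via $supp\,\Pi\subset D/2$), and the domination of $|\lambda|^2|\gamma|^2$ by $I_1+I_2$ through item \ref{pt3LemmaEta} of Lemma \ref{LemmaEta} and the bounds \eqref{I_1Bounded}--\eqref{I_2Bounded}. The only cosmetic difference is organizational: the paper first isolates the intermediate estimate \eqref{eqCor37} (with $\rho_{dt}(z_1,z_2)$ as integrator, later reused in Theorem \ref{TFS}) and then applies Cauchy--Schwarz, whereas you perform Cauchy--Schwarz first and bound $|\gamma_t|^2$ directly, which amounts to the same inequalities in a different order.
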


\begin{proof}
First, let us show that
\begin{equation}
\label{eqCor37}
\int_{\C^2} d\Pi(z_1,z_2) \int_0^T |\lambda(t,z_1,z_2)|^2 \rho_{dt}(z_1,z_2) < \infty.
\end{equation} For this, we use points \ref{pt3LemmaEta} 
and \ref{pt4LemmaEta} of Lemma \ref{LemmaEta}, \eqref{lambdaBounded}
and \eqref{E4600} we get 
\begin{eqnarray*}
\int_0^T |\lambda(t,z_1,z_2)|^2 \rho_{dt}(z_1,z_2) & = & \int_0^T |\lambda(t,z_1,z_2)|^2 \frac{d\rho_{t}(z_1,z_2) }{d\rho_t^S}\rho_{dt}^S  \\
&\leq &  \int_0^T |\lambda(t,z_1,z_2)|^2 \left(c_2 - c_3 \frac{d {\rm Re}(\eta(y_1,y_2,t))}{d\rho^S_t}\right)\rho_{dt}^S  \\
&\leq & c_2 e^{2c_1 \rho_{T}^S} \rho_{T}^S -  
c_3  \int_0^T {\rm Re}(\eta(z_1,z_2,dt))\exp \left( \int_t^T 2 {\rm Re}(\eta(z_1,z_2,du)) \right) \\
&\leq & c_2 e^{2c_1 \rho_{T}^S} \rho_{T}^S +\\
&& c_3\sup_{(\xi_1,\xi_2) \in I_0+i\R^2} - 
\int_0^T {\rm Re}(\eta(\xi_1,\xi_2,dt))\exp \left( \int_t^T 2 {\rm Re}(\eta(\xi_1,\xi_2,du))
 \right).
\end{eqnarray*}
Hence \eqref{eqCor37} is fulfilled.

Using Cauchy-Schwarz inequality, Fubini theorem and point \ref{pt3LemmaEta} of Lemma \ref{LemmaEta}, we have
\begin{eqnarray*}
&&\E{\int_0^T \left|  z (u, X_{u-}, S_{u-})\right|^2 S^2_{u-} 
  d\rho^S_u }
=\E{\int_0^T \left|\int_{\C^2} d \Pi(z_1, z_2) X_{t-}^{z_1} S_{t-}^{z_2} \lambda(t,z_1,z_2) \gamma_t(z_1,z_2)\right|^2 \rho^S_{ds}}\\
&\leq &|\Pi|(\C^2) \sup_{t\in[0,T],(a,b)\in I_0} \E{X_{t}^{2a} S_{t}^{2b}} \int_{\C^2} d |\Pi|(z_1, z_2) \int_0^T \left|\lambda(t,z_1,z_2) \gamma_t(z_1,z_2)\right|^2 \rho^S_{ds} \\
&\leq &|\Pi|(\C^2) \sup_{t\in[0,T],(a,b)\in I_0} \E{X_{t}^{2a} S_{t}^{2b}} \int_{\C^2} d |\Pi|(z_1, z_2) \int_0^T \left|\lambda(t,z_1,z_2) \right|^2 \rho_{dt}(z_1,z_2).
\end{eqnarray*}
The right-hand side is finite, thanks to \eqref{eqCor37}.

\end{proof}

One can prove that the \textbf{weak} F-S decomposition in Proposition \ref{FSExpAdd} is actually a strong F-S decomposition in the sense of Definition \ref{FSDefinition}.

\begin{theorem} \label{TFS}
Under Assumption
 \ref{setD}, the random variable
$$h=\int_{\C^2} d \Pi(z_1, z_2) X_{T}^{z_1} S_{T}^{z_2}$$ 
admits an F-S decomposition \eqref{FSDec} where $h_0 = Y_0$
 and $(Y,Z,O)$ is given in Proposition \ref{FSExpAdd}.
Moreover, if $h$ is real-valued then the decomposition $(Y,Z,O)$ is 
real-valued and it is therefore the unique F-S decomposition.
\end{theorem}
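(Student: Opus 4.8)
The plan is to promote the \emph{weak} F-S decomposition furnished by Proposition \ref{FSExpAdd} into a \emph{strong} one in the sense of Definition \ref{FSDefinition}, by verifying the three sufficient conditions \ref{RBSDEFS_1}, \ref{RBSDEFS_2} and \ref{RBSDEFS_3} gathered in Remark \ref{RBSDEFS}; the real-valued assertion will then be derived from the classical uniqueness recalled in Remark \ref{RMS} together with the conjugation property of item \ref{RFS_3} of Remark \ref{RFS}. Most of the integrability work has already been carried out in Lemma \ref{LemmaZInt} and Proposition \ref{ExpAddSqIntgMg}, so the argument is largely a matter of assembling these pieces.

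For condition \ref{RBSDEFS_1} I would check that $h \in L^2(\Omega)$. Cauchy--Schwarz with respect to the finite measure $|\Pi|$ gives $|h|^2 \le |\Pi|(\C^2)\int_{\C^2} d|\Pi|(z_1,z_2)\, X_T^{2{\rm Re}(z_1)} S_T^{2{\rm Re}(z_2)}$, and, taking expectations and applying Fubini, the right-hand side is finite by item \ref{propAddPt3} of Proposition \ref{propAdd}, since $I_0\subset D/2$ (item \ref{R411Item5} of Remark \ref{R411}) forces $2I_0\subset D$. For condition \ref{RBSDEFS_2}, i.e. $Z\in\Theta$, I use the expressions $\langle M^S\rangle_t = \int_0^t S_{u-}^2 \rho^S_{du}$ and $V^S_t = \int_0^t S_{u-}\kappa_{du}(0,1)$ of Corollary \ref{CCS}. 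The first requirement of $\Theta$ reads $\E{\int_0^T |Z_u|^2 S_{u-}^2\rho^S_{du}}<\infty$, which is exactly the content of Lemma \ref{LemmaZInt}. For the second requirement, item \ref{setD3} of Assumption \ref{setD} (recall $(0,1)\in I$) provides a constant $C$ with $\bigl|\frac{d\kappa_u(0,1)}{d\rho^S_u}\bigr|\le C$ $d\rho^S$-a.e., so that $\int_0^T |Z_u|\,d\Vert V^S\Vert_u \le C\int_0^T |Z_u| S_{u-}\,\rho^S_{du}$; Cauchy--Schwarz in $\rho^S$ and Lemma \ref{LemmaZInt} then yield $\E{\bigl(\int_0^T |Z_u|\,d\Vert V^S\Vert_u\bigr)^2}\le C^2\rho^S_T\,\E{\int_0^T |Z_u|^2 S_{u-}^2\rho^S_{du}}<\infty$.

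For condition \ref{RBSDEFS_3}, I observe that $Y-\int_0^\cdot \a(y)(u,X_{u-},S_{u-})\,\rho^S_{du}$ is precisely the martingale $\widehat M^\lambda$ of Theorem \ref{T17}, and apply Proposition \ref{ExpAddSqIntgMg} to conclude it is square integrable: its condition \ref{ExpAddSqIntgMg_1} is item \ref{R411Item5} of Remark \ref{R411}, while its condition \ref{ExpAddSqIntgMg_2} is the estimate \eqref{eqCor37} already proved inside Lemma \ref{LemmaZInt}. Having established \ref{RBSDEFS_1}--\ref{RBSDEFS_3}, the identity \eqref{E_RBSDEFS} writes $O$ as the sum of the square integrable martingale $\widehat M^\lambda$ (up to the constant $Y_0$) and the stochastic integral $-\int_0^\cdot Z_u\,dM^S_u$, which is a square integrable martingale because $Z\in L^2(M^S)$ by condition \ref{RBSDEFS_2}; hence $O$ is a square integrable martingale, strongly orthogonal to $M^S$ with $O_0=0$. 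Thus $(Y_0,Z,O)$ is a genuine F-S decomposition of $h$ in the sense of Definition \ref{FSDefinition}.

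Finally, assume $h$ real. By item \ref{R411Item4} of Remark \ref{R411}, $S$ satisfies (SC) with a deterministic, hence bounded, mean variance trade-off $K$, so Remark \ref{RMS} ensures that the real F-S decomposition is unique; splitting any complex decomposition into its real and imaginary parts, each of which is an F-S decomposition of a real random variable, transfers this uniqueness to the complex-valued setting. By item \ref{RFS_3} of Remark \ref{RFS}, $(\overline{Y_0},\overline{Z},\overline{O})$ is also an F-S decomposition of $\overline h=h$, so uniqueness forces it to coincide with $(Y_0,Z,O)$; therefore $Y_0,Z,O$ are real, and, since $Y_t=Y_0+\int_0^t Z_u\,dS_u+O_t$ and $S$ is real, $Y$ is real as well. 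The step requiring the most care is precisely this last transfer from the real uniqueness theorem of Monat--Stricker to the complex construction, which is why the conjugation property of Remark \ref{RFS} is invoked.
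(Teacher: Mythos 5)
Your proposal is correct and follows essentially the same route as the paper's own proof: it verifies items \ref{RBSDEFS_1}--\ref{RBSDEFS_3} of Remark \ref{RBSDEFS} via a Cauchy--Schwarz bound for the second moment of $h$, Lemma \ref{LemmaZInt}, and Proposition \ref{ExpAddSqIntgMg} (through \eqref{eqCor37}), and then obtains the real-valued statement from the uniqueness in Remark \ref{RMS} combined with the conjugation property of item \ref{RFS_3} of Remark \ref{RFS}. The only cosmetic deviation is that, for the $L^2(V^S)$ estimate, you invoke the uniform bound of item \ref{setD3} of Assumption \ref{setD} on $d\kappa_t(0,1)/d\rho^S_t$, whereas the paper uses its square integrability (i.e. $(0,1)\in\cD$); both are available under Assumption \ref{setD} and lead to the same conclusion.
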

\begin{remark}
\label{RGeneraliz}
This statement is a generalization of the results of  \cite{gor2013variance}
(and \cite{Hubalek2006}) to the case of hedging under basis risk. This yields
 a characterization of the hedging strategy in terms of Fourier-Laplace 
 transform
 and the moment generating function.
\end{remark}

\begin{proof}
Since $\Pi$ is a finite measure, then $h$ is square integrable.
Indeed by Cauchy-Schwarz
 \begin{equation}
 \label{Ehcarre}
\E{h^2} \leq  |\Pi|(\C^2) \int_{\C^2} \E{|X_T|^{2{\rm Re}(z_1)}|S_T|^{2{\rm Re}(z_2)}} d\Pi(z_1,z_2) 
 \leq \left(|\Pi|(\C^2)\right)^2 \sup_{(a,b)\in I} \E{|X_T|^{a}|S_T|^b},
\end{equation}
 where $I$ is a bounded subset of $\R^2$ defined in Assumption \ref{setD}.
 By item \ref{setD1} of Assumption \ref{setD}
and item \ref{propAddPt3} of Proposition \ref{propAdd}, previous quantity is finite.
 
By item \ref{R411Item4} of Remark \ref{R411} and
 by Remark \ref{RMS}, the real-valued F-S decomposition of any real valued
 square integrable $\cF_T$-measurable random variable is unique.

As a consequence, if $h$ is real-valued then its F-S decomposition 
is also real-valued. In fact, if $(Y_0,Z,O)$ is an F-S decomposition of $h$,
 then $(\overline Y_0,\overline Z,\overline O)$ is also an F-S of $\overline h$
by item \ref{RFS_3} of Remark \ref{RFS}.
 Thus, by subtraction,  $(\operatorname{Im}(Y_0), \operatorname{Im}(Z), \operatorname{Im}(O))$ is an F-S decomposition with real-valued triplet 
of the real-valued r.v.   $\operatorname{Im}(h)=0$.
 By uniqueness $\operatorname{Im}(Y_0)$, $\operatorname{Im}(Z)$ and $\operatorname{Im}(O)$ are null and the decomposition $(Y_0,Z,O)$ is real valued.

Now, let $(Y,Z,O)$ defined in Proposition \ref{FSExpAdd}. 
It remains to prove that $(Y_0,Z,O)$ is a strong (possibly complex) 
 F-S decomposition in the sense of Definition \ref{FSDefinition}.
For this we need to show items \ref{RBSDEFS_1},\ref{RBSDEFS_2},\ref{RBSDEFS_3} of Remark \ref{RBSDEFS}.
Item \ref{RBSDEFS_1} has been the object of \eqref{Ehcarre}.

We show below item \ref{RBSDEFS_2}
 i.e. $\E{\int_0^T | Z_u|^2d\langle M^S \rangle_u} < \infty$ and $\E{\left(\int_0^T |Z_u|d\| V^S \|_u\right)^2} < \infty$.
The first inequality is stated in Lemma \ref{LemmaZInt}.
In order to prove the second one, we 
recall that, by Corollary \ref{CCS},
$$
dV^S _t = S_{t-} \kappa_{dt}(0,1) = S_{t-} \frac{d\kappa_{t}(0,1)}{d\rho^S_t} \rho^S_{dt}.
$$
Consequently
\begin{eqnarray*}
\E{\left(\int_0^T |Z_u|d\| V^S \|_u\right)^2} &=& \E{\left(\int_0^T |Z_u| \left|\frac{d\kappa_{u}(0,1)}{d\rho^S_u}\right|S_{u-} \rho^S_{du}\right)^2} \\
&\leq & 
\int_0^T \left|\frac{d\kappa_{u}(0,1)}{d\rho^S_u}\right|^2 \rho^S_{du} \E{\int_0^T |Z_u|^2 S_{u-}^2 \rho^S_{du}}, \\
\end{eqnarray*}
which is finite since, by item \ref{R411Item3} of Remark \ref{R411} which says that $(0,1) \in \cD$, taking into account Lemma \ref{LemmaZInt}.

To end this proof, we need to show item \ref{RBSDEFS_3} of Remark \ref{RBSDEFS}. 
For this we use Proposition  \ref{ExpAddSqIntgMg} for which we need
to check conditions \ref{ExpAddSqIntgMg_1} and \ref{ExpAddSqIntgMg_2}.
By item \ref{R411Item5}  of Remark \ref{R411} 
 we have $I_0 \subset D/2$ which constitutes item \ref{ExpAddSqIntgMg_1}. Item \ref{ExpAddSqIntgMg_2}
is verified by condition \eqref{eqCor37} is verified.
 Hence Proposition \ref{ExpAddSqIntgMg} implies that 
$
t\mapsto y(t,X_t,S_t) - \int_0^t \a(y)(u,X_{u-},S_{u-}) \rho^S_{dt}
$ is a square integrable martingale.
\end{proof}

\subsection{Diffusion processes}
\label{S412}

We set $\cO = \R\times E$, where $E=\R$ or $]0,\infty[$.
In this Section we apply Corollary \ref{FSConditions} to the diffusion processes $(X,S)$
modeled  in Section \ref{E22} whose dynamics is given by \eqref{expleDiff}. 
We are interested in the F-S decomposition of $h = g(X_T,S_T)$.
We recall the assumption in that context.
\begin{assumption}\
\label{ADiff} \begin{itemize}
\item $b_X$, $b_S$, $\sigma_X$ and $\sigma_S$ are continuous
and  globally Lipschitz.
\item  $g: \cO \rightarrow \R$ is continuous.
\end{itemize}
\end{assumption}
We recall that $(X,S)$ solve the strong martingale problem related to $(\Da, \a, A)$
where $A_t = t$, $\Da =    \mathcal{C}^{1,2}([0,T[\times \cO) \cap 
\mathcal{C}^{1}([0,T]\times \cO)   $.
For a function $y \in \Da$, obviously  $\tilde y \in \Da$ and
the operators $\a$ and $\widetilde{\a}$
 are given by
\begin{eqnarray*}
\a(y) &=& \partial_t y + b_S \partial_s y + b_X \partial_x y
+  \frac{1}{2} \left\lbrace |\sigma_S|^2\partial_{ss} y + |\sigma_X|^2 \partial_{xx} y + 2 \langle \sigma_S, \sigma_X\rangle \partial_{sx} y  \right\rbrace, \\
\widetilde{\a}(y) &=& |\sigma_S|^2 \partial_s y + \langle \sigma_S, \sigma_X\rangle \partial_x y.
\end{eqnarray*}

Conditions \ref{FSConditions3}  of   Corollary \ref{FSConditions} translates into
\begin{eqnarray}
\label{pdeFSdiff}
b_S z &=& \partial_t y + b_S \partial_s y + b_X \partial_x y +  \frac{1}{2} \left\lbrace |\sigma_S|^2\partial_{ss} y + |\sigma_X|^2 \partial_{xx} y + 2 \langle \sigma_S, \sigma_X\rangle \partial_{sx} y  \right\rbrace,  \nonumber \\
y(T,.,.) &=& g(.,.), \\
|\sigma_S|^2 z  &=& |\sigma_S|^2 \partial_s y + \langle \sigma_S, \sigma_X\rangle \partial_x y. \nonumber 
\end{eqnarray}

If, moreover, 
 $\frac{1}{|\sigma_S|}$ is locally bounded,
 then we have  
\begin{equation}
\label{E496a}
\left\{
\begin{aligned}
\partial_t y + B \partial_x y + \frac{1}{2} \left( |\sigma_S|^2\partial_{ss} y + |\sigma_X|^2 \partial_{xx} y + 2 \langle \sigma_S, \sigma_X\rangle \partial_{sx} y  \right) =0, 
 \\
y(T,.,.) = g(.,.)
\end{aligned}
\right.
\end{equation}
and
\begin{equation}  \label{E496b}
 z = \partial_s y + \dfrac{\langle \sigma_S, \sigma_X\rangle}{|\sigma_S|^2} \partial_x y,
\end{equation}
where 
\begin{equation}
\label{EB}
B=b_X - b_S \dfrac{\langle \sigma_S, \sigma_X\rangle}{|\sigma_S|^2}.
\end{equation}
$z$ is then locally bounded since $\sigma_S, \sigma_X$ and $\frac{1}{\vert \sigma_S\vert}$ 
are locally bounded and because
$y \in \Da$.

\begin{proposition}
\label{WeakFSDiffusion}
We suppose the validity of Assumption \ref{ADiff} 
and that $\vert \sigma_S \vert$ is always strictly positive. 
If $(y,z)$ is a solution of the system
\eqref{E496a} and \eqref{E496b},
 such that $y \in \Da$, then $(Y,Z,O)$ is a solution of the BSDE \eqref{FSasBSDE}, where 
$$ Y_t = y(t, X_{t}, S_{t}), \quad
Z_t =d z(t, X_{t}, S_{t}),\quad
O_t = Y_t - Y_0 - \int_{0}^{t} Z_u d S_u.
$$
\end{proposition}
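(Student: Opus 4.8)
The plan is to verify that the couple $(y,z)$ satisfies the three hypotheses of Corollary \ref{FSConditions}, from which the conclusion follows at once, since the present setting is exactly the diffusion framework of Section \ref{E22}. First I would record the action of the operators on the coordinate function: using the expressions for $\a$ and $\widetilde{\a}$ recalled just before the statement, and the fact that $\partial_s\, id = 1$ while all other derivatives of $id$ vanish, one gets $\a(id) = b_S$ and $\widetilde{\a}(id) = |\sigma_S|^2$. With these identities, Conditions \eqref{FSConds1} and \eqref{FSConds2} of Corollary \ref{FSConditions} read simply $\a(y) = b_S\, z$ and $\widetilde{\a}(y) = |\sigma_S|^2\, z$.

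Next I would dispatch the two structural hypotheses. For Hypothesis \ref{FSConditions1}, $y \in \Da = \mathcal{C}^{1,2}([0,T[\times\cO)\cap \mathcal{C}^{1}([0,T]\times\cO)$ holds by assumption, and since $id$ is smooth, multiplication by $id$ preserves this class, so $\widetilde{y} = y\times id \in \Da$, as already observed at the beginning of Section \ref{S412}. For Hypothesis \ref{FSConditions2}, the strict positivity and continuity of $\sigma_S$ make $1/|\sigma_S|$ locally bounded; together with the local boundedness of $\sigma_S$, $\sigma_X$ and of the derivatives of $y \in \Da$, formula \eqref{E496b} exhibits $z$ as locally bounded. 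Hence by item \ref{RC33_2} of Remark \ref{RC33}, Condition \eqref{ERC43} is fulfilled.

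The core of the argument is Hypothesis \ref{FSConditions3}, which reduces to checking that the system \eqref{E496a}--\eqref{E496b} is equivalent to \eqref{FSConds1}--\eqref{FSConds2}. Condition \eqref{FSConds2} is immediate: substituting \eqref{E496b} gives $|\sigma_S|^2 z = |\sigma_S|^2 \partial_s y + \langle \sigma_S, \sigma_X\rangle \partial_x y = \widetilde{\a}(y)$. For Condition \eqref{FSConds1}, I would compute $\a(y) - b_S\, z$ using \eqref{E496b}; the two contributions $b_S\partial_s y$ cancel, leaving $\partial_t y + \big(b_X - b_S \langle \sigma_S, \sigma_X\rangle/|\sigma_S|^2\big)\partial_x y + \frac{1}{2}\big(|\sigma_S|^2\partial_{ss} y + |\sigma_X|^2\partial_{xx} y + 2\langle \sigma_S, \sigma_X\rangle \partial_{sx} y\big)$, which is exactly $\partial_t y + B\partial_x y + \frac{1}{2}(\cdots)$ with $B$ as in \eqref{EB}. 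This vanishes by \eqref{E496a}, so $\a(y) = b_S\, z$, i.e. \eqref{FSConds1} holds; the terminal condition $y(T,\cdot,\cdot) = g$ is part of \eqref{E496a}.

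Finally I would invoke Corollary \ref{FSConditions} to conclude that the triplet $(Y,Z,O)$ with $Y_t = y(t,X_t,S_t)$, $Z_t = z(t,X_t,S_t)$ and $O_t = Y_t - Y_0 - \int_0^t Z_u\, dS_u$ solves the linear BSDE \eqref{FSasBSDE}; here the continuity of $(X,S)$ lets me identify the left limits $X_{t-}, S_{t-}$ appearing in the corollary with $X_t, S_t$. I do not expect a serious obstacle: the proof is essentially a translation of the abstract conditions into the diffusion language, and the only genuine points to watch are the membership $\widetilde{y}\in\Da$ (automatic, as $id$ is smooth) and the local boundedness of $z$, which relies precisely on $\sigma_S$ being continuous and nowhere vanishing.
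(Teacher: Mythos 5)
Your proposal is correct and follows essentially the same route as the paper: the paper's proof likewise invokes Corollary \ref{FSConditions}, noting that Hypothesis \ref{FSConditions1} holds since $y,\widetilde{y}\in\Da$, that Hypothesis \ref{FSConditions2} follows from the local boundedness of $z$ via item \ref{RC33_2} of Remark \ref{RC33}, and that Hypothesis \ref{FSConditions3} is precisely the equivalence of \eqref{E496a}--\eqref{E496b} with \eqref{FSConds1}--\eqref{FSConds2} established in the discussion preceding the statement. The only difference is that you spell out explicitly the algebraic cancellation leading from \eqref{pdeFSdiff} to \eqref{E496a}--\eqref{E496b}, which the paper delegates to ``the considerations above the statement of the Proposition.''
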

\begin{proof}
It follows from Corollary \ref{FSConditions} for which we need to check the conditions \ref{FSConditions1}, \ref{FSConditions2} and \ref{FSConditions3}. 
Indeed, since $y, \tilde y \in \Da$,  Condition \ref{FSConditions1} holds; since $z$ is locally bounded, by item \ref{RC33_2} of Remark \ref{RC33},
Condition \ref{FSConditions2} is fulfilled. 
Condition \ref{FSConditions3} has been the object of the 
considerations above the statement of the Proposition.
\end{proof}
The result above yields the weak F-S decomposition for $h$.
In order to show that $(Y_0, Z, O)$ constitutes a true  F-S decomposition, we
 need to make use of
Remark \ref{RBSDEFS}.
 First we introduce another assumption.

\begin{assumption}\
\label{AFSDiff}
Suppose that the process $(X,S)$ takes values in $\cO$ and the validity of the 
following conditions.
\begin{enumerate}[label=\roman*)]
\item $g\in C^1$ such that $g$, $\partial_x g$ and $\partial_s g$ have polynomial growth.

\item $B$ is globally Lipschitz. 

\item $\partial_x B$, $\partial_s B$, $\partial_x \sigma_X$, $\partial_s 
\sigma_X,$ $\partial_x \sigma_S$ and $\partial_s \sigma_S$ exist, are continuous and have polynomial growth.
\item $\sigma_S$ never vanishes.
\end{enumerate}
\end{assumption}

\begin{theorem}
\label{FSDiffusion}
Suppose that Assumptions \ref{ADiff} and \ref{AFSDiff} are fulfilled and 
suppose the existence of a function $y:[0,T]\times \cO \rightarrow \R$ such 
that
 \begin{equation}
 \label{solY}
 y \in C^0([0,T]\times \cO) \cap C^{1,2}([0,T[\times \cO) \text{ verifies the PDE } \eqref{E496a} \text{ and has polynomial growth.}
 \end{equation}
 Then the F-S decomposition \eqref{FSDec} 
of $h=g(X_T,S_T)$ is provided by $(h_0,Z,O)$ where,
$h_0 = Y_0$,
$$Y_t=y(t,X_t,S_t),\; Z_t=z(t,X_t,S_t), O_t = Y_t-Y_0-\int_0^t Z_u dS_u,$$
and $z:[0,T] \times \cO \rightarrow \R$ is given by \eqref{E496b}.
\end{theorem}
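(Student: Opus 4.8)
The plan is to verify the three conditions \ref{RBSDEFS_1}, \ref{RBSDEFS_2}, \ref{RBSDEFS_3} of Remark \ref{RBSDEFS}, which together guarantee that the weak F-S decomposition furnished by Proposition \ref{WeakFSDiffusion} is in fact a genuine (strong) F-S decomposition in the sense of Definition \ref{FSDefinition}. First I would observe that Proposition \ref{WeakFSDiffusion} already applies: the hypothesis \eqref{solY} gives a $y \in C^{1,2}([0,T[\times\cO)\cap C^0([0,T]\times\cO)$ solving \eqref{E496a}, hence $(y,z)$ with $z$ given by \eqref{E496b} solves the system, and $z$ is locally bounded since $\sigma_S,\sigma_X,1/|\sigma_S|$ are locally bounded (using that $\sigma_S$ never vanishes, Assumption \ref{AFSDiff} iv)) and $y\in\Da$. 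This produces the triplet $(Y,Z,O)$ with $Y_t=y(t,X_t,S_t)$, $Z_t=z(t,X_t,S_t)$ and $O_t=Y_t-Y_0-\int_0^t Z_u\,dS_u$, solving the linear BSDE \eqref{FSasBSDE}, i.e. the weak decomposition.

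The main work is to upgrade integrability. For \ref{RBSDEFS_1}, since $g$ has polynomial growth (Assumption \ref{AFSDiff} i)) and $(X,S)$ has all moments by Remark \ref{Rallmoments}, the random variable $h=g(X_T,S_T)$ is square integrable. For \ref{RBSDEFS_2} and \ref{RBSDEFS_3} the key is a polynomial-growth estimate on $z$ and on $Y=y(\cdot,X_\cdot,S_\cdot)$ together with its bounded-variation part. The natural route is to exhibit $y$ and its relevant derivatives as having polynomial growth, so that all the $L^2$-type integrals reduce, via Remark \ref{Rallmoments}, to moment bounds of the form $\E{\sup_{t\le T}(|X_t|^p+|S_t|^p)}<\infty$. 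Concretely, $Z_t=z(t,X_t,S_t)=\partial_s y+\frac{\langle\sigma_S,\sigma_X\rangle}{|\sigma_S|^2}\partial_x y$ evaluated along $(X,S)$; provided $\partial_s y$ and $\partial_x y$ have polynomial growth and the coefficient $\frac{\langle\sigma_S,\sigma_X\rangle}{|\sigma_S|^2}$ is controlled, one gets $\E{\int_0^T|Z_u|^2\,d\langle M^S\rangle_u}<\infty$ and $\E{(\int_0^T|Z_u|\,d\|V^S\|_u)^2}<\infty$ since $\langle M^S\rangle_t=\int_0^t|\sigma_S|^2(u,X_u,S_u)\,du$ and $V^S_t=\int_0^t b_S(u,X_u,S_u)\,du$ have polynomially growing integrands by the Lipschitz (hence linear-growth) assumption on $\sigma_S,\sigma_X,b_S$. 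For \ref{RBSDEFS_3} one writes $M^Y_t:=y(t,X_t,S_t)-\int_0^t\a(y)(u,X_{u-},S_{u-})\,du$, which by Remark \ref{RE22} equals the stochastic integral $\int_0^t(\partial_x y\,\sigma_X+\partial_s y\,\sigma_S)\cdot dW$, and one checks its quadratic variation has finite expectation, again by polynomial growth of $\partial_x y,\partial_s y$ and the moment bounds; this makes $M^Y$ a square integrable martingale. Once \ref{RBSDEFS_2} and \ref{RBSDEFS_3} hold, the displayed identity \eqref{E_RBSDEFS} shows $O$ is a square integrable martingale, completing the verification.

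The main obstacle is establishing the polynomial growth of the spatial derivatives $\partial_x y$ and $\partial_s y$ of the PDE solution, which is \emph{not} assumed directly (only $y$ itself is assumed to have polynomial growth in \eqref{solY}). The clean way to obtain this is a Feynman--Kac / stochastic flow argument: one introduces the auxiliary diffusion $(\bar X,S)$ under the drift modification governed by $B$ in \eqref{EB}, so that the PDE \eqref{E496a} is the Kolmogorov backward equation for $y(t,x,s)=\E{g(\bar X^{t,x,s}_T,S^{t,x,s}_T)}$. Then the regularity hypotheses of Assumption \ref{AFSDiff} ii)--iii)---namely $B$ globally Lipschitz and $\partial_x B,\partial_s B,\partial_x\sigma_X,\partial_s\sigma_X,\partial_x\sigma_S,\partial_s\sigma_S$ continuous with polynomial growth---are exactly what is needed to differentiate the flow $(\bar X^{t,x,s},S^{t,x,s})$ with respect to the initial data in $L^p$ and to control $\partial_x y,\partial_s y$ by differentiating under the expectation, using that $\partial_x g,\partial_s g$ have polynomial growth (Assumption \ref{AFSDiff} i)). This differentiability of the flow, combined with the polynomial-growth control on its derivatives uniformly on $[0,T]$, is the technical heart; with it in hand, all the integrability estimates above follow routinely, and Remark \ref{RBSDEFS} then delivers the strong F-S decomposition $(h_0,Z,O)$ with $h_0=Y_0$.
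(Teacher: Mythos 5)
Your proposal is correct and takes essentially the same approach as the paper: both verify items \ref{RBSDEFS_1}--\ref{RBSDEFS_3} of Remark \ref{RBSDEFS}, with the key step being the polynomial-growth control of $\partial_x y$ and $\partial_s y$ derived from the Feynman--Kac representation of $y$ under the modified drift $B$ and differentiation of the stochastic flow, which is exactly what the paper does by invoking the representation \eqref{StochRep} (via Karatzas--Shreve) and adapting Friedman's derivative estimates. No gaps.
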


\begin{proof}

Let $y:[0,T]\times \cO \rightarrow \R$ verifying \eqref{solY} 
and $z$ defined by \eqref{E496b}.
In order to show that the triplet given in Proposition \ref{WeakFSDiffusion}
 yields a true F-S decomposition, we need to show items \ref{RBSDEFS_1}, \ref{RBSDEFS_2}, \ref{RBSDEFS_3} of Remark \ref{RBSDEFS}.

First notice that the random variable $g(X_T,S_T)$ is square integrable,
 because $g$ has polynomial growth and $X$ and $S$ admit all moments, see Remark \ref{Rallmoments}.
So \ref{RBSDEFS_1} is verified.

In view of verifying item \ref{RBSDEFS_2} of Remark \ref{RBSDEFS}
we recall that 
$$ \a(id) = b_S, \widetilde\a(id)  = \vert \sigma_S \vert^2, A_t \equiv t \text{ and } \; z = \partial_s y + \dfrac{\langle \sigma_S, \sigma_X\rangle}{|\sigma_S|^2} \partial_x y.$$
 Indeed, since  $y$ has polynomial growth, it is forced to be unique since 
\cite[Theorem 7.6, chapter 5]{Karatzas1991Brownian} implies that 
\begin{equation}
\label{StochRep}
y(t,x,s) = \E{g(X^{t,x,s}_T, S^{t,x,s}_T)},
\end{equation}
where $(\widetilde X=X^{t,x,s}, \widetilde S=S^{t,x,s})$ is a solution of 
 $$
 d \left(
\begin{smallmatrix}
  \widetilde X_r  \\
  \widetilde S_r
 \end{smallmatrix} \right) = \Sigma(r,\widetilde X_r, \widetilde S_r) d\widetilde W_r + 
\left(
\begin{smallmatrix}
  B(r, \widetilde X_r, \widetilde S_r) \\
  0
 \end{smallmatrix} \right)dr,
 $$
with $\widetilde X_t=x, \; \widetilde S_t=s$,
 where $\widetilde W=(\widetilde W^1,\widetilde W^2)$ is a standard two-dimensional Brownian motion,  
 and
 $$
 \Sigma = \left(
\begin{smallmatrix}
  \sigma_{X,1} & \sigma_{X,2}  \\
  \sigma_{S,1} & \sigma_{S,2}
 \end{smallmatrix} \right).
 $$
We recall that  $B$ has been defined in \eqref{EB}.

By \eqref{StochRep}, a straightforward adaptation of
 \cite[Theorem 5.5]{Friedman1975StochasticVol1} yields that
the partial derivatives 
 $\partial_x y$ and $\partial_s y$
exist and are continuous on $[0,T] \times \cO$ and they
 have polynomial growth.

Using \eqref{E496b}, we have 
$z b_S= b_S\partial_s y + b_X \partial_x y - 
B \partial_x y.$ Now, since $\partial_x y$ and $\partial_s y$ have polynomial growth, and by assumption
$b_S$, $b_X$ and $B$ have linear growth, we get that $z b_S$ has polynomial growth. This gives, by Remark \ref{Rallmoments},
$$\E{\left(\int_0^T \left|z b_S\right|(t,X_t, S_t) dt\right)^2} < \infty.$$
On the other hand, using \eqref{E496b} and 
 Cauchy-Schwarz, we have 
\begin{eqnarray*}
 |z \sigma_S| &=& ||\sigma_S|\partial_s y + 
\frac{\langle \sigma_X,\sigma_S\rangle}{|\sigma_S|} \partial_x y | \leq  |\sigma_S| |\partial_s y| + |\sigma_X||\partial_x y |.
\end{eqnarray*}
Since $\sigma_X$, $\sigma_S$ have linear growth and $\partial_x y$ and $\partial_s y$ have polynomial growth, 
we get that $z \sigma_S$ has polynomial growth, which implies, by Remark \ref{Rallmoments}, that
$ \E{\int_0^T \left|z \sigma_S\right|^2(t,X_t, S_t) dt} < \infty. $
Consequently, item \ref{RBSDEFS_2} of Remark \ref{RBSDEFS} is fulfilled. 

In order to show the last item \ref{RBSDEFS_3},
taking into account Remark \ref{RE22},
 we need to prove that 
\begin{eqnarray*}
u\mapsto M^Y_u&=&
\int_0^u \partial_x y(r,X_r,S_r) \left(\sigma_{X,1}(r,X_r,S_r) d W^1_r+\sigma_{X,2}(r,X_r,S_r) d W^2_r\right) \\
&+& \int_0^u \partial_s y(r,X_r,S_r) \left(\sigma_{S,1}(r,X_r,S_r) d W^1_r+\sigma_{S,2}(r,X_r,S_r) d W^2_r\right)
\end{eqnarray*}
is a square integrable martingale.  This is due to the fact that $\partial_x y$ and $\partial_s y$ have polynomial growth, and that $\sigma_X$ and $\sigma_S$ have linear growth, and Remark \ref{Rallmoments}, which implies that
$$
\E{\int_0^T \lbrace(\partial_x y(r,X_r,S_r))^2 |\sigma_X(r,X_r,S_r)|^2  +(\partial_s y(r,X_r,S_r))^2 |\sigma_S(r,X_r,S_r)|^2 \rbrace du} < \infty.
$$
This concludes the proof of Theorem \ref{FSDiffusion}.
\end{proof}
Below we show that, under  Assumptions \ref{ADiff} and \ref{AFSDiff},
 Condition \eqref{solY}
is not really restrictive.
\begin{proposition} \label{PFSDiffusion} 
We assume the validity of Assumptions \ref{ADiff} and 
\ref{AFSDiff}.

Moreover we suppose the validity of one of the three items below.
\begin{enumerate}[label=\arabic*)]
\item \label{RegularCase} We set $\cO = \R^2$. Suppose that the second 
(partial, with respect to $(x,s)$) derivatives of $B$,
 $\sigma_X,$ $\sigma_S$ and $g$ exist, are continuous and
 have polynomial growth.

\item \label{UEllip}  We set $\cO = \R^2$.
We suppose $B$, $\sigma_X$, $\sigma_S$ to be  bounded and
there exist $\lambda_1,\lambda_2>0$ such that
$$
\lambda_1 |\xi|^2 \leq (\xi_1,\xi_2)C(t,x,s)(\xi_1,\xi_2)^T \leq \lambda_2 |\xi|^2, \; \forall \xi=(\xi_1,\xi_2)\in \cO,
$$
where $C(t,x,s) = \left(
\begin{smallmatrix}
  |\sigma_X|^2(t,x,s) & \langle \sigma_X, \sigma_S\rangle(t,x,s)  \\
  \langle \sigma_X, \sigma_S\rangle(t,x,s) & |\sigma_S|^2(t,x,s)
 \end{smallmatrix} \right)$.

\item \label{BScase} (Black-Scholes case).
We suppose $\cO = ]0,+\infty[^2$.
 \begin{eqnarray*}
b_S(t,x,s) = s\hat b_S, && \sigma_{S}(t,x,s) = (s \hat \sigma_{S,1} ,\; s \hat\sigma_{S,2}), \\
 b_X(t,x,s) = x\hat b_X, && \sigma_{X}(t,x,s) = (x \hat \sigma_{X,1} ,\; x\hat \sigma_{X,2}),
\end{eqnarray*}where $\hat b_S$, $\hat b_X$, $\hat\sigma_{S,1}$,
 $\hat\sigma_{S,2}$,
 $\hat\sigma_{X,1}$ and $\hat\sigma_{X,2}$ are constants,
such that $\langle \hat\sigma_X, \hat\sigma_S\rangle < |\hat\sigma_X||\hat\sigma_S|$.
\end{enumerate}
We have the following results.
\begin{description}
\item{i)} There is a (unique) strict solution $y$ of \eqref{E496a}
 of class $C^{1,2}([0,T[ \times \cO) \cap C^{0}([0,T] \times \cO)$
  with polynomial growth.
\item{ii)}
 The F-S decomposition \eqref{FSDec}
 of $h = g(X_T,S_T)$ is provided by $(h_0,Z,O)$ where
$(Y,Z,O)$ fulfills
$$Y_t=y(t,X_t,S_t),\; Z_t=z(t,X_t,S_t) \text{ and } O_t = Y_t-Y_0-\int_0^t Z_u 
dS_u,$$
 where $z$ is given by \eqref{E496b}.
\end{description}
\end{proposition}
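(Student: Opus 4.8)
The plan is to reduce the whole statement to Theorem \ref{FSDiffusion}, whose hypothesis \eqref{solY} asks precisely for a function $y \in C^0([0,T]\times\cO)\cap C^{1,2}([0,T[\times\cO)$ of polynomial growth solving the terminal-value problem \eqref{E496a}. Consequently item ii) is an immediate consequence of item i) combined with Theorem \ref{FSDiffusion}, and all the work concentrates on producing such a strict solution (item i)) in each of the three scenarios. I would first recognize \eqref{E496a} as the Kolmogorov backward equation of the diffusion $(\widetilde X,\widetilde S)$ introduced in the proof of Theorem \ref{FSDiffusion}, namely the solution of
\[
 d\left(\begin{smallmatrix}\widetilde X_r\\ \widetilde S_r\end{smallmatrix}\right) = \Sigma(r,\widetilde X_r,\widetilde S_r)\,d\widetilde W_r + \left(\begin{smallmatrix}B(r,\widetilde X_r,\widetilde S_r)\\ 0\end{smallmatrix}\right)dr,
\]
whose generator is exactly the second-order operator of \eqref{E496a} together with the first-order part $B\partial_x$. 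The natural candidate is therefore the Feynman-Kac representation $y(t,x,s)=\E{g(\widetilde X^{t,x,s}_T,\widetilde S^{t,x,s}_T)}$, which also yields uniqueness: any polynomial-growth classical solution must coincide with this expectation by applying It\^o's formula to $r\mapsto y(r,\widetilde X_r,\widetilde S_r)$, localizing, and passing to the limit via the moment bounds of Remark \ref{Rallmoments} (this is the argument behind \cite[Theorem 7.6, chapter 5]{Karatzas1991Brownian} already used in Theorem \ref{FSDiffusion}).

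For case \ref{RegularCase} I would exploit the extra smoothness: since $B$, $\sigma_X$, $\sigma_S$ and $g$ now possess continuous second space derivatives of polynomial growth, the stochastic flow $(x,s)\mapsto(\widetilde X^{t,x,s}_T,\widetilde S^{t,x,s}_T)$ is twice continuously differentiable in every $L^p(\Omega)$, with derivative processes of polynomial growth uniformly on compact time intervals. Differentiating twice under the expectation defining $y$ — the interchange being justified by dominated convergence and the $L^p$ moment estimates — then shows $y\in C^{1,2}([0,T[\times\cO)$ with polynomial growth, continuous up to $T$; a final application of It\^o's formula confirms that $y$ solves \eqref{E496a}. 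This upgrades to second order the first-derivative computation already carried out in the proof of Theorem \ref{FSDiffusion} through \cite[Theorem 5.5]{Friedman1975StochasticVol1}.

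For case \ref{UEllip} differentiation of the flow is no longer available because $\sigma_X,\sigma_S$ are merely Lipschitz, so I would instead invoke the classical linear parabolic theory. Boundedness together with the uniform ellipticity of the matrix $C$ makes the operator uniformly parabolic with bounded H\"older (Lipschitz hence H\"older) coefficients, so there is a fundamental solution enjoying two-sided Gaussian bounds; these bounds keep the expectation $y$ finite for polynomially growing $g$ and, by interior Schauder estimates, give $y\in C^{1,2}([0,T[\times\cO)$, continuous up to $T$ with $y(T,\cdot,\cdot)=g$ and of polynomial growth, see \cite{Friedman1975StochasticVol1}. For case \ref{BScase} I would perform the logarithmic change of variables $x=e^u$, $s=e^v$: using $x\partial_x=\partial_u$, $x^2\partial_{xx}=\partial_{uu}-\partial_u$ and the analogous identities in $s$, equation \eqref{E496a} turns into a constant-coefficient parabolic equation in $(u,v)$ whose diffusion matrix has entries $|\hat\sigma_X|^2,|\hat\sigma_S|^2,\langle\hat\sigma_X,\hat\sigma_S\rangle$; the strict Cauchy-Schwarz hypothesis $\langle\hat\sigma_X,\hat\sigma_S\rangle<|\hat\sigma_X||\hat\sigma_S|$ forces this matrix to be positive definite, so the transformed problem is a genuine non-degenerate heat equation solved explicitly by Gaussian convolution. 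Transforming back yields the required strict solution on $[0,T[\times]0,\infty[^2$, and polynomial growth is preserved since $g$ of polynomial growth becomes exponentially growing in $(u,v)$, which Gaussian convolution keeps integrable and exponentially bounded.

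In each scenario item i) is thereby established, Condition \eqref{solY} holds, and Theorem \ref{FSDiffusion} delivers the F-S decomposition \eqref{FSDec} of $h=g(X_T,S_T)$ with $h_0=Y_0$ and $(Y,Z,O)$ built from $y$ and formula \eqref{E496b} for $z$, which is item ii). The main obstacle is entirely the regularity theory of item i): reconciling the unbounded, polynomially growing terminal datum $g$ with the (possibly degenerate) generator. In case \ref{RegularCase} the delicate point is justifying the second differentiation under the expectation and controlling the polynomial growth of the second-order flow derivatives uniformly in the starting point; in case \ref{UEllip} one must feed polynomially growing data into a bounded-coefficient PDE, relying on the Gaussian fundamental solution rather than on differentiating the flow; case \ref{BScase} is the most transparent thanks to the explicit kernel.
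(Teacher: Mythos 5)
Your proposal is correct and takes essentially the same route as the paper: item ii) is reduced to Theorem \ref{FSDiffusion} via item i), with case \ref{RegularCase} handled by the Feynman--Kac representation plus flow regularity (your differentiation-of-the-flow sketch is exactly the substance of \cite[Theorems 5.5 and 6.1]{Friedman1975StochasticVol1}, which the paper cites), case \ref{UEllip} by the Gaussian-bounded fundamental solution of classical parabolic theory (\cite[Theorems 8 and 12]{Friedman1983PDE} in the paper), and case \ref{BScase} by the same logarithmic change of variables reducing to the uniformly elliptic constant-coefficient setting, with exponential bounds transforming back into polynomial growth.
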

\begin{remark} \label{RT420}
We will show below that under the hypotheses of Proposition \ref{PFSDiffusion},
then conclusion i) holds, i.e. 
 there is a function
$y$ fulfilling \eqref{solY}. We observe that, by the proof of Theorem
\ref{FSDiffusion}, if such a $y$ exists then it admits the probabilistic representation
\eqref{StochRep} and so it is necessarily the unique 
  $C^{1,2}([0,T[ \times \cO) \cap C^{0}([0,T] \times \cO)$, with polynomial
 growth, 
 solution of \eqref{E496a}.
\end{remark}

\begin{proof}

We proceed to discussing the existence of $y$ mentioned in Remark \ref{RT420}.
So we distinguish now the mentioned three cases.

Suppose first item \ref{RegularCase}.
The function $y$ defined  by \eqref{StochRep} is a continuous function
by the fact that the flow $(\widetilde X, \widetilde S)$
is continuous in all variables and Remark \ref{Rallmoments},
taking into account Lebesgue dominated convergence theorem.
  \cite[Theorem 6.1]{Friedman1975StochasticVol1}, 
 states that $y$ belongs to $C^{1,2}([0,T] \times \cO)$,
 and it verifies the PDE \eqref{E496a}. 
 \cite[Theorem 5.5]{Friedman1975StochasticVol1} says 
 in particular that $y$ has polynomial growth. 
In that case  conclusion i) is established.

Under the assumption described in   item \ref{UEllip},
the conclusion i) can be obtained by simply adapting the proof 
of \cite[Theorem 12, p.25]{Friedman1983PDE}.
Indeed, according to \cite[Theorem 8, p.19]{Friedman1983PDE}
 there is a fundamental solution $\Gamma: \{(t_1,t_2), 0\leq t_1<t_2\leq T\}\times	\R^2\times \R^2 \rightarrow \R$ such that 
\begin{equation}
\label{EGamma}
\Gamma(t_1,t_2; \gamma, \xi) \leq \frac{1}{a_1(t_2-t_1)} \exp\left(-\frac{-|\gamma-\xi|^2}{a_1(t_2-t_1)} \right),
\end{equation} where $a_1$ is a positive constant.

Now, by \cite[Theorem 12, p.25]{Friedman1983PDE}, the function $y$ defined by
\begin{equation}
\label{SolByFond}
y(t,x,s) = \int_{\R^2} \Gamma(t,T; (x,s),(\xi_1,\xi_2)) 
g(\xi_1, \xi_2) d\xi_1d\xi_2,
\end{equation}
is a strict solution of \eqref{E496a}, in particular it belongs
to    $\mathcal{C}^{1,2}([0,T[\times \R^2) \cap 
\mathcal{C}^{0}([0,T]\times \R^2)$.

Since g has polynomial growth then there exist $a_2>0$, $p>1$ such that, $\forall x,s \in\R$,
\begin{equation}
\vert g(x,s) \vert \leq  a_2(1+|x|^p +|s|^p). \label{E4102}
\end{equation}
Thus, by \eqref{SolByFond}, \eqref{EGamma}  and  \eqref{E4102},
for $x,s\in\R$ and $0\leq t\leq T$, we have
\begin{equation*}
\vert y(t,x,s)\vert \leq \frac{a_2}{a_1(T-t)} \int_{\R^2} 
(1+|\xi_1|^p +|\xi_2|^p) \exp\left(-\frac{\vert x-\xi_1\vert^2 + \vert s-\xi_2\vert^2}{a_1 (T-t)} \right) d\xi_1d\xi_2.
\end{equation*} 
So there is a constant $C_1(p, T)>0$ such that
$
\vert y(t,x,s)\vert \leq C_1(p, T) \left( 1 + \E{\vert x+G_1 \vert^p+\vert x+G_2 \vert^p}\right),
$
where $G=(G_1,G_2)$ is a two dimensional centered Gaussian vector
 with covariance matrix equal to $\frac{a_1(T-t)}{2}$ times the identity matrix.
  Since $p>1$, then there is a constant $C_2(p, T)$ such that
 \begin{eqnarray*}
\vert y(t,x,s)\vert &\leq & C_2(p,T) \left( 1 + |x|^p + |s|^p + \E{\vert G_1 \vert^p+\vert G_2 \vert^p} \right) \\
&\leq & C_3(p,T) \left( 1 + |x|^p + |s|^p \right),
\end{eqnarray*} where $C_3(p,T)$ is another positive constant.
%
In conclusion the solution $y$ given by \eqref{SolByFond} 
has polynomial growth.

We discuss now  the Black-Scholes case \ref{BScase}
showing that, also in that case, there is $y$ such that
 \eqref{solY} is fulfilled.
First notice that the uniform ellipticity condition in \ref{UEllip}
 is not fulfilled for this dynamics, so we consider a logarithmic change of variable.
 For a function $y\in \Da$, we introduce the function $\hat y:[0,T] \times \R^2  \rightarrow \R$ 
defined by
$
\hat y(t, x, s) =  y(t, \log(x), \log(s)),\; \forall t\in[0,T], x,s>0.
$
By inspection we can show that $y$ is a solution of \eqref{E496a} 
if and only if $\hat y$ fulfills
\begin{eqnarray}\label{PDE-BS}
0&=& \partial_t \hat y + 
\left(\hat b_X-\hat b_S  \dfrac{\langle \hat\sigma_S,
 \hat\sigma_X\rangle}{|\hat\sigma_S|^2} - \frac{1}{2}|\hat\sigma_X|^2\right) \partial_x \hat y -
\frac{1}{2}|\hat \sigma_S|^2 \partial_s \hat y + \nonumber \\
&& +\frac{1}{2} \left( |\hat \sigma_S|^2\partial_{ss} \hat y+
 |\hat \sigma_X|^2 \partial_{xx} \hat y + 
 2 \langle \hat \sigma_S, \hat \sigma_X\rangle \partial_{sx} \hat y  \right), \\
\hat y(T,.,.) &=& \hat g(.,.), \nonumber
\end{eqnarray} where $\hat g(x,s)=g(e^x,e^s),\;\forall x,s \in\R$. 
Notice that the PDE problem \eqref{PDE-BS}  has constant coefficients 
and it verifies the uniform ellipticity condition in \ref{UEllip}.

Moreover, since $g$ has polynomial growth, then there exist $c>0,p>1$ such that $g(x,s) \leq c( 1+x^p+s^p),\; \forall x,s>0$ again. Hence 
$\hat g(x,s) \leq c( 1+e^{px}+e^{ps}),\; \forall x,s\in\R$. 

Again, by simple adaptation of the proof of 
 \cite[Theorem 12, p.25]{Friedman1983PDE}, 
 we observe that
 equation \eqref{PDE-BS} admits a solution $\hat y$ in $C^{1,2}([0,T[\times\R^2) \cap C^0([0,T]\times\R^2)$, such that $\hat y(t,x,s) \leq K( 1+e^{px}+e^{ps}),\; \forall x,s\in\R,$
where $K>0$.
This yields that $y$ has polynomial growth, since $y(t, x, s) = \hat y(t, \log(x), \log(s)),\; \forall t\in[0,T], x,s>0$, so
$y(t,x,s) \leq K( 1+x^{p}+s^{p}),\; \forall t\in[0,T], x,s>0.$
This concludes the proof of conclusion i).

Conclusion ii) is now a direct consequence of Theorem \ref{FSDiffusion}
together with condition i).
\end{proof}
\begin{remark} \label{RHulley}
The last item of Proposition \ref{PFSDiffusion} permits to recover the results already found in \cite{Hulley2008}, by replacing
\begin{eqnarray*}
\hat b_S = (\mu_S-r), &&  \hat \sigma_{S} = (\sigma_S ,\; 0), \\
\hat b_X = (\mu_U-r), && \hat \sigma_{X} = (\rho \sigma_U ,\; \sqrt{1-\rho^2} \sigma_U),
\end{eqnarray*}where $\mu_S$, $\mu_U$, $r$, $\sigma_S$ and $\sigma_U$ are constants.
\end{remark}

\begin{appendices}
\numberwithin{equation}{section}

\section{Proof of Proposition \ref{PropMarkov}}
\label{PropMarkovProof}
\setcounter{equation}{0}

\begin{proof}
Let $f\in E$ and set $\widetilde f(x) = \frac{f(x)}{1+x^2}, \; \forall x\in \R$.
Condition \eqref{derFlow} implies, by mean value theorem, that there exists a constant $c(t)$ such that
$
\E{\left| X_t^{0,x}- X_t^{0,y} \right|^2} \leq c(t) \left|x-y\right|^2, \; \forall x,y\in \R.
$
Then, by the Garsia-Rodemich-Rumsey criterion, see for instance \cite[Section 3]{BarlowYor1982}, there exists a r.v. $\Gamma_t$ such that $\E{\Gamma_t^2}<\infty$ and $\forall x,y \in\R$ 
\begin{equation}
\label{XHolder}
\left|  X_t^{0,x}- X_t^{0,y} \right| \leq \Gamma_t \left|x-y\right|^\alpha, \; \text{for}\; 0 < \alpha	<\frac{1}{2},
\end{equation}
possibly up to a modified version of the flow.

This implies in particular that for $x\in\R$
\begin{eqnarray*}
\frac{|X_t^{0,x}|^2}{1+x^2} & \leq & \frac{2}{1+x^2} \left( |X_t^{0,0}|^2 + |X_t^{0,x} - X_t^{0,0}|^2 \right) \\
& \leq & \frac{2}{1+x^2} \left( |X_t^{0,0}|^2 + \left|\Gamma_t\right|^2 |x|^{2\alpha} \right) \leq  2 \left( |X_t^{0,0}|^2  + \left|\Gamma_t\right|^2 \right).
\end{eqnarray*}
Hence
\begin{equation}
\label{XMoment2}
\sup_{x\in\R} \E{\frac{|X_t^{0,x}|^2}{1+x^2}} < \infty.
\end{equation}
Consequently,  for $x\in \R$, we have
$$ \frac{\left|P_tf(x)\right|}{1+x^2}  = \frac{\left|\E{f(X_t^{0,x})}\right|}{1+x^2}  \leq  \norm{f}_E \frac{1+\E{|X_t^{0,x}|^2}}{1+x^2} 
 \leq  \norm{f}_E \sup_{\xi \in\R} \frac{1+\E{|X_t^{0,\xi}|^2}}{1+\xi^2}.
$$
The right-hand side is finite, thanks to \eqref{XMoment2}, so that
\begin{equation}
\label{PtBounded}
\norm{P_tf}_E \leq \norm{f}_E \sup_{\xi \in\R} \frac{1+\E{|X_t^{0,\xi}|^2}}{1+\xi^2}.
\end{equation}
After we will have shown that 
$\widetilde {P_tf}$ is also uniformly continuous,  \eqref{PtBounded} 
will also imply that $P_tf  \in E$ and that
 $P_t$ is a bounded linear operator.

Therefore it remains to show that $\widetilde{ P_tf}$ is uniformly continuous. For this, let $x,y\in\R$. We have
\begin{equation}
\label{EI1I2}
\frac{P_tf(x)}{1+x^2}-\frac{P_tf(y)}{1+y^2} = \E{\frac{f(X^{0,x}_t)}{1+x^2}  -  \frac{f(X^{0,y}_t)}{1+y^2}} 
 =  \E{I_1+I_2},
\end{equation}
where 
\begin{eqnarray*}
I_1 &=& \left(\widetilde f(X^{0,x}_t) - \widetilde f(X^{0,y}_t) \right) \frac{1+(X^{0,x}_t)^2}{1+x^2} \\
I_2 &=& \widetilde f(X^{0,y}_t) \left(  \frac{1+(X^{0,x}_t)^2}{1+x^2} -  \frac{1+(X^{0,y}_t)^2}{1+y^2}\right).
\end{eqnarray*}

Let $\epsilon>0$. By uniform continuity of $\widetilde f$, there exists $\delta_1>0$ such that
\begin{equation}
\label{fTildeUCont}
\forall a,b\in \R,\; |a-b|\leq \delta_1 \Rightarrow \left| \widetilde f(a) - \widetilde f(b) \right| < \epsilon.
\end{equation}
Since $\displaystyle \lim_{M\to \infty} \E{|I_1| \1_{|\Gamma_t| \ge M}}=0$,
there exists $M_1>0$ such that
\begin{equation} \label{E216}
\E{|I_1| \1_{|\Gamma_t| \ge M_1}} < \epsilon.
\end{equation}
We fix $0 < \alpha < \frac{1}{2}$ and we choose
 $\delta_2 = \left(\frac{\delta_1}{M_1}\right)^{1/\alpha}$.
Taking into account \eqref{XHolder} and \eqref{fTildeUCont}, for $|x-y|<\delta_2$  we have 
\begin{eqnarray*}
\E{|I_1| \1_{|\Gamma_t|<M_1}} & \leq & \E{\frac{1+(X^{0,x}_t)^2}{1+x^2} \left(\widetilde f(X^{0,x}_t) - \widetilde f(X^{0,y}_t) \right) \1_{\left|  X_t^{0,x}- X_t^{0,y} \right|<\delta_1}} \nonumber\\
& < &  \sup_{\xi\in\R} \E{\frac{1 + |X_t^{0,\xi}|^2}{1+\xi^2}} \epsilon.
\end{eqnarray*} The right-hand side is finite thanks to \eqref{XMoment2}. Consequently, if $|x-y|<\delta_2$, then
\eqref{E216} implies that 
\begin{equation}
\label{I_1}
\E{|I_1|} <  A_1 \epsilon,
\end{equation} where $A_1= 1+ \sup_{\xi\in\R} \E{1 + \frac{|X_t^{0,\xi}|^2}{1+\xi^2}}$.

Concerning $I_2$, we define
\begin{equation}
\label{FI_2}
F(\omega, z) = \frac{1+|X_t^{0,z}(\omega)|^2}{1+z^2}, \omega \in \Omega, z \in \R.
\end{equation}
Since $z\mapsto F(\cdot, z)$ is differentiable in $L^2(\Omega)$, by mean 
value theorem we get
$$
\E{|I_2|} = |x-y| \E{\left|\widetilde f(X_t^{0,y}) 
\int_0^1 \partial_z F(\cdot, a x + (1-a) y) da \right|} \leq |x-y| \norm{f}_E \sup_{z}\E{\left|\partial_z F(\cdot, z)\right|}.
$$
It remains to estimate the previous supremum. We have for $z\in\R$
$$
\partial_z F(\cdot, z) = 2\frac{X^{0,z}_t \partial_z X^{0,z}_t}{1+z^2} -  2 z \frac{1+|X^{0,z}_t|^2}{(1+z^2)^2}.
$$
So by Cauchy-Schwarz we get
$$
\E{|\partial_z F(\cdot, z)|}  \leq  2\left( \frac{\E{|X^{0,z}_t|^2}}{1+z^2} \frac{\E{|\partial_z X^{0,z}_t|^2}}{1+z^2} \right)^{1/2} + 2 \frac{|z|}{1+z^2} \frac{1+\E{|X^{0,z}_t|^2}}{1+z^2} \leq  A_2,
$$
where
$
A_2 = 2\left( \displaystyle\sup_{z}\frac{\E{|X^{0,z}_t|^2}}{1+z^2} \displaystyle\sup_{z}\E{|\partial_z X^{0,z}_t|^2} \right)^{1/2}
+ \left( 1 + \displaystyle\sup_{z}\frac{\E{|X^{0,z}_t|^2}}{1+z^2} \right).
$

By \eqref{derFlow} and \eqref{XMoment2} $A_2$ is finite and we get
\begin{equation}
\label{I_2}
\E{|I_2|} \leq A_2 \norm{f}_E |x-y|.
\end{equation}

Combining inequalities \eqref{I_1} and \eqref{I_2}, 
\eqref{EI1I2} gives the existence of $\delta>0$ such that
\begin{equation*}
|x-y|<\delta \Rightarrow \left|\frac{P_tf(x)}{1+x^2}-\frac{P_tf(y)}{1+y^2}\right| < \epsilon,
\end{equation*} so that the function $x\mapsto \frac{P_tf(x)}{1+x^2}$ is uniformly continuous.

In conclusion we have proved that $P_tf\in E$. $P_t$ is a bounded linear operator follows as a consequence
of \eqref{PtBounded}.
\end{proof}

\section{Proof of Theorem \ref{thLevyGen}}
\label{appA_BSDE}

\setcounter{equation}{0}

We recall that the semigroup $P$ is here given by
$ P_t f(x) = \E{f(x + X_t)}, x \in \R, t \ge 0$ and
$X$ is a square integrable L\'evy process vanishing at zero.
The classical theory of semigroup for L\'evy processes defines the semigroup $P$ on the set $C_0$ of continuous functions vanishing at infinity, equipped with the sup-norm $\norm{u}_\infty = \sup_x{|u(x)|}$, cf. for example \cite[Theorem 31.5]{SatoBook}. On $C_0$, the semigroup  $P$ is strongly continuous, with norm $\norm{P}=1$, and its generator $L_0$ is given by 
\begin{eqnarray}
\label{L0App}
L_0f(x) = \int \left(f(x+y)-f(x)-yf^\prime(x)\1_{|y|<1}\right)\nu(dy), \ f \in C_0.
\end{eqnarray}
Moreover, \cite[Theorem 31.5]{SatoBook} shows that $C^2_0 \subset D(L_0)$, where $C^2_0$ is the set of functions $f\in C^2$ such that $f$, $f^\prime$ and $f^{''}$ vanish at infinity.

To prove Theorem \ref{thLevyGen} which concerns the infinitesimal generator of the semigroup $P$ defined on the set $E$ (cf. \eqref{setE}) related to a square integrable pure jump L\'evy process, we adapt  the classical theory. Since we consider a space $(E, \norm{.}_E)$, different from the classical one,
i.e.  $(C_0, \norm{.}_\infty)$, we need to show that $(P_t)$ is still a strongly continuous semigroup.

\begin{proposition}
\label{LevyPtStongCont}
Let $X$ be a square integrable L\'evy process, then the semigroup $(P_t) : E \rightarrow E$ is strongly continuous.
\end{proposition}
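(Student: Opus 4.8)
The plan is to prove strong continuity directly from the definition, i.e. to show that $\norm{P_t f - f}_E \to 0$ as $t \to 0$ for every $f \in E$. Since $P_t$ maps $E$ into itself and is bounded (Proposition \ref{PropMarkov} together with item \ref{R25i} of Remark \ref{R25}), the quantity $\norm{P_t f - f}_E$ is well defined, and since $|\E{\cdot}|\le\E{|\cdot|}$ it suffices to bound
\[
\norm{P_t f - f}_E = \sup_{x\in\R} \frac{|\E{f(x+X_t)} - f(x)|}{1+x^2} \le \sup_{x\in\R} \E{\frac{|f(x+X_t) - f(x)|}{1+x^2}}.
\]
The only probabilistic input I would use is that $X$ is square integrable with $\E{X_t} = c_1 t$ and $\mathrm{Var}(X_t) = c_2 t$ (see \eqref{LevySqInt12}), so that $\E{X_t^2} = c_2 t + c_1^2 t^2 \to 0$; in particular $X_t \to 0$ in $L^2(\Omega)$ as $t \to 0$, and by Markov's inequality $\P(|X_t| \ge \delta) \to 0$ for every $\delta > 0$.

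The key algebraic step is to factor out the weight. Writing $\widetilde f(s) = f(s)/(1+s^2)$, which by definition of $E$ in \eqref{setE} is bounded and uniformly continuous with $\norm{\widetilde f}_\infty = \norm{f}_E$, I would decompose
\[
\frac{f(x+X_t) - f(x)}{1+x^2} = \frac{1+(x+X_t)^2}{1+x^2}\bigl(\widetilde f(x+X_t) - \widetilde f(x)\bigr) + \frac{2xX_t + X_t^2}{1+x^2}\,\widetilde f(x).
\]
The second term is harmless: using $\tfrac{2|x|}{1+x^2}\le 1$ and $\tfrac{1}{1+x^2}\le 1$, its expectation is bounded in absolute value by $\norm{f}_E(|c_1| t + c_2 t + c_1^2 t^2)$, which tends to $0$ uniformly in $x$.

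For the first term the decisive estimate is the elementary bound $\frac{1+(x+y)^2}{1+x^2} \le 2(1+y^2)$, valid for all $x,y \in \R$, which strips the $x$-dependence from the prefactor. Given $\varepsilon > 0$, I would pick $\delta$ from the uniform continuity of $\widetilde f$ so that $|\widetilde f(a) - \widetilde f(b)| < \varepsilon$ whenever $|a-b| < \delta$, and split the expectation over $\{|X_t| < \delta\}$ and $\{|X_t| \ge \delta\}$. On the first event the increment of $\widetilde f$ is at most $\varepsilon$, while $\E{\frac{1+(x+X_t)^2}{1+x^2}} = 1 + \frac{2xc_1 t + c_2 t + c_1^2 t^2}{1+x^2}$ stays bounded by a constant for small $t$ uniformly in $x$; on the second event I bound the increment by $2\norm{f}_E$ and the prefactor by $2(1+X_t^2)$, so this contribution is at most $4\norm{f}_E\bigl(\P(|X_t|\ge\delta) + \E{X_t^2 \1_{|X_t|\ge\delta}}\bigr)$, which vanishes as $t \to 0$ uniformly in $x$ by square integrability. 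Collecting the three pieces gives $\limsup_{t\to 0}\norm{P_t f - f}_E \le C\varepsilon$, and letting $\varepsilon \to 0$ concludes. The main obstacle — and the reason the classical $C_0$ argument of \cite{SatoBook} does not transfer verbatim — is obtaining the tail control \emph{uniformly in $x$}; this is exactly what the bound $\frac{1+(x+y)^2}{1+x^2} \le 2(1+y^2)$ secures, reducing everything to $x$-independent moment estimates on $X_t$.
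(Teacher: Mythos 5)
Your proof is correct and is essentially the paper's argument: both reduce everything to the weighted function $\widetilde f$, combine its uniform continuity with a split over $\{|X_t|\ge\delta\}$ versus $\{|X_t|<\delta\}$, and finish with the moment asymptotics $\E{X_t^2}=c_2t+c_1^2t^2\to 0$ together with $\P(|X_t|\ge\delta)\to 0$. The only (cosmetic) difference is the placement of the weight ratio: the paper writes $\frac{f(x+X_t)-f(x)}{1+x^2}=\bigl(\widetilde f(x+X_t)-\widetilde f(x)\bigr)+\widetilde f(x+X_t)\frac{X_t^2+2xX_t}{1+x^2}$, so the increment of $\widetilde f$ carries no prefactor and uniform continuity applies directly, whereas your decomposition attaches the ratio $\frac{1+(x+X_t)^2}{1+x^2}$ to the increment and therefore needs the additional elementary bound $\frac{1+(x+y)^2}{1+x^2}\le 2(1+y^2)$ plus tail moment control --- both routes close without difficulty.
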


\begin{proof}
The idea of the proof is an adaptation of the proof in \cite[Theorem 31.5]{SatoBook}.

Let $f\in E$ and $\widetilde{f}$ defined by $\widetilde{f}(x)=\frac{f(x)}{1+x^2}, \; \forall x\in \R$. 
We evaluate, for $t>0$, $x \in \R$
$$
\dfrac{P_tf(x) - f(x)}{1+x^2} = \E{\widetilde{f}(x+X_t)-\widetilde{f}(x)} + 
\E{\widetilde{f}(x+X_t)\frac{X_t^2+2x X_t}{1+x^2} }.
$$
So 
\begin{equation} \label{EB11}
 \norm{P_tf - f}_E \leq \sup_{x\in \R} \left| \E{ \widetilde{f}(x+X_t)-\widetilde{f}(x) } \right| + \sup_{x\in \R} \left| \E{ \widetilde{f}(x+X_t)\frac{X_t^2+2x X_t}{1+x^2} } \right|.
\end{equation}
First, notice that
$$
\left| \E{\widetilde{f}(x+X_t)\frac{X_t^2+2x X_t}{1+x^2}} \right| \leq  \norm{f}_E \E{ \frac{X_t^2+2|x X_t|}{1+x^2} }  \leq  \norm{f}_E \left(\E{X_t^2} + \E{|X_t|} \right),
$$
hence $$\sup_{x\in \R} \left| \E{ \widetilde{f}(x+X_t)\frac{X_t^2+2x X_t}{1+x^2}} \right| \leq \norm{f}_E \left(\E{X_t^2} + \E{|X_t|} \right).$$

Since $X$ is a square integrable L\'evy process,  $\E{X_t^2} =c_2t + c_1^2t^2$
where $c_1, c_2$ were defined in \eqref{LevySqInt12}.
Hence, the right-hand side of the inequality above goes to zero as $t$ goes to zero.

Now we prove that the first     term
$\sup_{x\in \R}\left|\E{\widetilde{f}(x+X_t)-\widetilde{f}(x)}\right|$
 in the right-hand side of \eqref{EB11}  goes to zero as well.
Let $\epsilon>0$ be a fixed positive real.
Since $\widetilde{f}$ is uniformly continuous, then there is $\delta >0$ such that
$\forall x,y \; |x-y|<\delta \; \Rightarrow \; |\widetilde{f}(x)-\widetilde{f}(y)|<\frac{\epsilon}{2}.$
Moreover, since $X$ is continuous in probability $$\exists t_0>0, \; \text{such that} \; \forall t<t_0 ,\; \P(|X_t|>\delta) < \frac{\epsilon}{4 \norm{f}_E}.$$

For all $x\in\R$, $t<t_0$ we have
\begin{align*}
\begin{split}
\left| \E{\widetilde{f}(x+X_t)-\widetilde{f}(x)} \right|&\leq \E{\left|\widetilde{f}(x+X_t)-\widetilde{f}(x)\right|  \1_{\{|X_t|\leq\delta\}} } +  \E{\left|\widetilde{f}(x+X_t)-\widetilde{f}(x)\right|  \1_{\{|X_t|>\delta\}} }\\
&\leq  \frac{\epsilon}{2} + 2 \norm{f}_E \P(|X_t|>\delta) \leq  \epsilon.
\end{split}
\end{align*}
Since the inequality above is valid for every $x\in\R$, then 
$ \sup_{x\in \R}\left|\E{\widetilde{f}(x+X_t)-\widetilde{f}(x) }\right| \xrightarrow{t\to 0}  0.$
This concludes the proof that $P$ is a strongly continuous semigroup.
\end{proof}

\begin{remark} \label{RContraction}
 Notice that the semigroup $(P_t)$ is not a contraction. In fact, 
if $f\in E, t>0$, then
\begin{equation} 
\label{ENormPt}
 \norm{P_t f}_E = \sup_{x\in \R} \left| \E{\frac{f(x+X_t)}{1+x^2}}\right|.
\end{equation}
Let $f_0(x)=1+x^2$ and denote again $c_1 = \E{X_1}$ 
and $c_2 = {\rm Var} (X_1)$. Obviously $f_0 \in E$,
$\norm{f_0}_E=1$ and
\begin{equation}
\label{ENormPt1}
\norm{P_t f_0}_E = \sup_{x\in \R} \E{\frac{1+(x+X_t)^2}{1+x^2}} = 1+\sup_{x\geq 0}\frac{2x|c_1|t + c_2t + c_1^2t^2}{1+x^2} = 1 + \vert c_1 \vert t +c_1^2  t^2.
\end{equation}
Hence $(P_t)$ cannot be not a contraction since
$
\norm{P_t} \ge \norm{P_t f_0}_E > 1.
$
\end{remark}

\bigskip
On the other hand,  for $f\in E$, \eqref{ENormPt} gives
$ \norm{P_t f}_E 
 \leq  \norm{f}_E \norm{P_t f_0}_E. $
By  \eqref{ENormPt1} this implies that
$
\norm{P_t} \leq 1+(|c_1| + c_2)t + c_1^2t^2.
$
So, there exists a positive real $\omega>0$ such that
$
\norm{P_t} \leq e^{\omega t}.
$

Semigroups verifying the latter inequality are called 
{\bf quasi-contractions}, see \cite{PazyBook}. For instance, \cite[Corollary 3.8]{PazyBook} implies that 
\begin{equation}
\label{HilleYosidaQuasiContr}
\forall \lambda>\omega,\; \lambda I - L \; \text{is invertible}.
\end{equation}

At this point we show that the space $E^2_0$, defined in \eqref{SetE2_0},
 is a subset of $D(L)$ and that  formula \eqref{L0App} remains valid in 
$E^2_0$. This will be done adapting a technique described in 
\cite[Theorem 31.5]{SatoBook}, where it is stated  
 that $C^2_0$ is included in $D(L_0)$.  
The main tool  used for the proof of \cite[Theorem 31.5]{SatoBook}
 is the  small time asymptotics 
\begin{equation} \label{EAsymp}
\lim_{t \to 0} \frac{1}{t} \E{ g(X_t)} = \int g(x) \nu(dx),
\end{equation}
which holds for bounded continuous function $g$ vanishing on a
 neighborhood of the origin, see \cite[Corollary 8.9]{SatoBook}.
 This result has been extended to a class of unbounded functions by
 \cite[Theorem 1.1]{Figueroa2008small}. 
\eqref{EAsymp} is used in \cite[Proposition 2.3]{Figueroa2008small} to prove  that the quantity $\displaystyle\lim_{t \to 0} \dfrac{P_t g - g}{t}(x)$
 converges point-wise,  under some suitable conditions on the function $g$. 

We state a similar lemma below.
\begin{lemma}
\label{appLem1}
Let $f\in E^2_0$. For all   $x\in\R,$   the quantity 
\begin{equation}
\label{PointWiseCv}
\lim_{t \to 0} \dfrac{P_t f-f}{t}(x)
\end{equation} 
exists  and equals the right-hand side
of \eqref{L0App}.
\end{lemma}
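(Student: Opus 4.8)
The plan is to reduce the statement to the small-time asymptotics \eqref{EAsymp}, mimicking the argument of \cite[Proposition 2.3]{Figueroa2008small}. First I would fix $x\in\R$ and split the incremental ratio as
\begin{equation*}
\frac{P_tf-f}{t}(x)=\frac1t\E{f(x+X_t)-f(x)}=\frac1t\E{\phi_x(X_t)}+f'(x)\,\frac1t\E{X_t\1_{|X_t|<1}},
\end{equation*}
where $\phi_x(y):=f(x+y)-f(x)-y f'(x)\1_{|y|<1}$ is precisely the integrand of \eqref{L0App}. The aim is then to prove that the first term tends to $\int \phi_x(y)\,\nu(dy)=L_0f(x)$ while the drift correction in the second term vanishes.

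Before invoking the asymptotics I would record the regularity and integrability of $\phi_x$. Since $f\in E^2_0$ (see \eqref{SetE2_0}), $f''$ is continuous and vanishes at infinity, hence bounded; a second order Taylor expansion then gives $|\phi_x(y)|\le\frac12\bigl(\sup_z|f''(z)|\bigr)y^2$ for $|y|<1$, so $\phi_x$ is $O(y^2)$ near the origin and integrable against $\nu$ there by \eqref{LevySqInt}. For $|y|\ge1$ one has $\phi_x(y)=f(x+y)-f(x)$, and the estimate $|f(x+y)|\le\norm{f}_E\bigl(1+(x+y)^2\bigr)$ (valid because $f\in E^2_0\subset E$, cf. Remark \ref{R23Markov}) combined with $\int_{|y|\ge1}y^2\,\nu(dy)<\infty$ shows that $\int_{|y|\ge1}|\phi_x|\,d\nu<\infty$. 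Thus $L_0f(x)$ is well defined and $\phi_x$ fits the class of functions to which \cite[Theorem 1.1 and Proposition 2.3]{Figueroa2008small} apply.

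The core of the argument, and the main obstacle, is the convergence $\frac1t\E{\phi_x(X_t)}\to\int\phi_x\,d\nu$. Here Sato's small-time result \cite[Corollary 8.9]{SatoBook} cannot be used directly, since $\phi_x$ does not vanish near the origin; the delicate point is to show that the small jumps contribute exactly the near-origin part of $\int\phi_x\,d\nu$ and not a spurious second-order term, which is where the bound $\phi_x(y)=O(y^2)$, the control $\frac1t\E{X_t^2\1_{|X_t|<\varepsilon}}\le\frac1t\E{X_t^2}\to c_2$, and the square-integrability \eqref{LevySqInt} enter, exactly as in \cite{Figueroa2008small}. Once this is settled, I would dispose of the drift term by writing $\E{X_t\1_{|X_t|<1}}=\E{X_t}-\E{X_t\1_{|X_t|\ge1}}=c_1t-\E{X_t\1_{|X_t|\ge1}}$, with $c_1$ as in \eqref{LevySqInt12}, and applying the extended asymptotics to $y\mapsto y\1_{|y|\ge1}$, a function vanishing near the origin, to obtain $\frac1t\E{X_t\1_{|X_t|\ge1}}\to\int_{|y|\ge1}y\,\nu(dy)=c_1$. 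Hence $\frac1t\E{X_t\1_{|X_t|<1}}\to0$, the drift contribution disappears, and the limit \eqref{PointWiseCv} equals $L_0f(x)$ as claimed.
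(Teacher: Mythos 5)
Your overall strategy---reducing the lemma to the small-time asymptotics of \cite{Figueroa2008small}---is also the paper's, and your verification that $L_0f(x)$ is well defined is correct. The gap lies in the two limits your decomposition actually requires. Both integrands you feed into the asymptotics, namely $\phi_x(y)=f(x+y)-f(x)-yf'(x)\1_{|y|<1}$ and $y\mapsto y\1_{|y|\ge1}$, are discontinuous on $\{|y|=1\}$ (for $\phi_x$ the jump there has size $|f'(x)|$), whereas \eqref{EAsymp} and \cite[Theorem 1.1]{Figueroa2008small} are statements about continuous integrands; for such results the discontinuity set must at least be $\nu$-null. This is not a removable technicality: if $\nu$ charges $\{|y|=1\}$, your intermediate claims are false. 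Take $X=N+Y$ with $N$ a standard Poisson process (jumps of size $1$, so $\nu\supset\delta_1$) and $Y$ an independent symmetric, square integrable, infinite-activity pure-jump L\'evy process with jumps in $[-1/2,1/2]$. Splitting on the number of jumps of $N$: the event $\{N_t=0\}$ contributes $0$ to $\frac1t\E{X_t\1_{|X_t|\ge1}}$ because $y\mapsto y\1_{|y|\ge1}$ is odd and $Y_t$ is symmetric; the event $\{N_t=1\}$ contributes $\frac{P(N_t=1)}{t}\E{(1+Y_t)\1_{|1+Y_t|\ge1}}\to\frac12$, since $P(Y_t\ge0)\to\frac12$ (infinite activity gives $P(Y_t=0)=0$) and $\E{|Y_t|}\to0$; and $\{N_t\ge2\}$ is negligible. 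Hence $\frac1t\E{X_t\1_{|X_t|\ge1}}\to\frac12\neq1=\int_{|y|\ge1}y\,\nu(dy)$, so your drift term $\frac1t\E{X_t\1_{|X_t|<1}}$ tends to $\frac12$, not $0$, and correspondingly $\frac1t\E{\phi_x(X_t)}\to\int\phi_x\,d\nu-\frac{f'(x)}{2}$: the two errors cancel only in the sum. On top of this, the convergence you yourself call the core of the argument is not proved but deferred to ``exactly as in \cite{Figueroa2008small}''---and that is precisely the point where the citation does not cover your (discontinuous) $\phi_x$.

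The way out, which is the paper's proof, is to eliminate the truncation \emph{before} any limit is taken, exploiting square integrability. Since $\int y^2\,\nu(dy)<\infty$, the tail integral $\int_{|y|\ge1}yf'(x)\,\nu(dy)=c_1f'(x)$ is finite and $\E{X_t}=c_1t$, so one has the exact identities $\frac{P_tf-f}{t}(x)=c_1f'(x)+\frac1t\E{f(x+X_t)-f(x)-X_tf'(x)}$ and $L_0f(x)=c_1f'(x)+\int\left(f(x+y)-f(x)-yf'(x)\right)\nu(dy)$: the drift cancels identically rather than in a limit, and no indicator remains. Writing the remainder in integral Taylor form, the paper then applies \cite[Theorem 1.1]{Figueroa2008small} only to the \emph{continuous} functions $y\mapsto y^2f''(ay+x)$, $a\in[0,1]$ (case (ii) when $f''(x)\neq0$, case (i) when $f''(x)=0$, both dominated by $\norm{f''}_\infty y^2$), and concludes by dominated convergence in $a$. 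Your decomposition becomes correct under the additional hypothesis $\nu(\{-1,1\})=0$, but the lemma does not grant it.
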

\begin{remark}
\label{RappLem1}\
\begin{enumerate} [label=\arabic*)]
\item To be self-contained, we give below a simple proof of 
Lemma \ref{appLem1}, in the case when $X$ is a square integrable pure jump 
process.
\item
Later we will need to show that the point-wise convergence \eqref{PointWiseCv} holds according to the norm of $E$.
\end{enumerate}
\end{remark}
\begin{proof}
Let $f\in E^2_0$. First, we verify that the integral \begin{equation} \label{proofL}
\int\left(f(x+y)-f(x)-y f^\prime(x)\1_{|y|<1}\right)\nu(dy) 
\end{equation}is well-defined for all $x\in \R$, taking into account $\int y^2 \nu(dy) < \infty$ by \eqref{LevySqInt}.  

In fact, by Taylor expansion and since $f \in E^2_0$, then for every $x\in \R$ there exist $a, b\geq 0$ such that, for all $y\in \R$
\begin{eqnarray*}
|f(x+y)-f(x)|\1_{|y|\geq 1} &\leq& a (y^2 + 1) \1_{|y|\geq 1},\\
|f(x+y)-f(x)-f^\prime(x)y|\1_{|y|<1} &\leq& b y^2 \1_{|y|<1}.
\end{eqnarray*}

Let $t>0, x\in \R$. By Taylor expansion and Fubini theorem, recalling that 
$P_t f(x)= \E{f(x + X_t)}$ we have
$$
\dfrac{P_t f-f}{t}(x) = c_1 f^\prime(x) + \int_0^1 (1-a)\frac{1}{t} \E{f^{''}(aX_t+x)X_t^2} da.$$ 
 By abuse of notation, we denote by $L_0f(x)$ the integral \eqref{proofL}.
Taking into account
 \eqref{LevySqInt12} we have
\begin{eqnarray}
\label{GenLevyTaylor}
L_0f(x) &=& c_1 f^\prime(x) + \int\left(f(x+y)-f(x)-yf^\prime(x)\right)\nu(dy) \nonumber\\
&=& c_1 f^\prime(x) + \int_0^1 (1-a) \int_\R y^2 f^{''}(ay+x) \nu(dy) da.
\end{eqnarray}
Hence, it remains to show that ($x$ being fixed)
\begin{align}
\label{EB4bis}
\begin{split}
\dfrac{P_t f-f}{t}(x) - L_0f(x) &= \int_0^1 (1-a) \Big(\frac{1}{t}\E{X_t^2f^{''}(aX_t+x)} - \int_\R y^2 f^{''}(ay+x) \nu(dy)\Big) da \\
& \xrightarrow[t \to 0]\   0.
\end{split}
\end{align}
For $a \in [0,1]$, we denote 
$g(y) = y^2  f^{''}(ay+x)$. 
We have $g(y) \underset{y \to 0}{\sim} y^2   f^{''}(x).$ 
If $  f^{''}(x) \neq 0$, then 
 \cite[Theorem 1.1]{Figueroa2008small} (ii) implies that
 \begin{equation} \label{EFig}
\lim_{t \to 0}\frac{1}{t}\E{g(X_t)} = \int_\R g(y) \nu(dy).
\end{equation}
If $  f^{''}(x) = 0$, then  $g(y) = o(y^2)$ and
 \eqref{EFig} is still valid by 
\cite[Theorem 1.1]{Figueroa2008small} (i).
We conclude to the validity of \eqref{EB4bis}
by Lebesgue dominated convergence  theorem
taking into account that $ f^{''}  $ is bounded.
\end{proof}

As observed in a similar case in  \cite[Remark 2.4]{Figueroa2008small},
we will prove that the point-wise convergence proved in Lemma
 \ref{PointWiseCv} holds in the strong sense.  

For this purpose, we introduce  the linear subspace
\begin{equation*}
\widetilde{E} = \Big\{ f\in \cC \; \text{such that }\; \widetilde{f}:= x\mapsto \frac{f(x)}{1+x^2}\; \text{is vanishing at infinity}\;\Big\} 
\end{equation*}
of $E$. It it is easy to show that $\widetilde E$ is closed in $E$ so that it is a Banach subspace of $E$.

\begin{lemma}
\label{appLem2}
Let $f, \; g\in \widetilde{E}$, such that
\begin{equation}
\label{Ldiaz}
\lim_{t \to 0} \dfrac{P_t f-f}{t}(x)  = g(x), \; \forall x\in\R.
\end{equation}
Then $f\in D(L)$ and $ L f = g$.
\end{lemma}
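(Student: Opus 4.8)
The plan is to reduce the statement to Remark \ref{R100}, which asserts that whenever $f,g\in E$ satisfy $(P_tf)(x)-f(x)-\int_0^t P_u g(x)\,du=0$ for all $t\ge 0$ and $x\in\R$, then $f\in D(L)$ and $Lf=g$. Since $\widetilde E\subset E$, it suffices to upgrade the pointwise hypothesis \eqref{Ldiaz} into the scalar integral identity
\[
P_t f(x)-f(x)=\int_0^t P_u g(x)\,du,\qquad \forall t\ge 0,\ x\in\R.
\]
First I would fix $x\in\R$ and study the real function $\psi(t):=P_tf(x)$. Both $t\mapsto P_tf(x)$ and $t\mapsto P_tg(x)$ are continuous on $[0,\infty)$: this follows from the strong continuity of $(P_t)$ (Proposition \ref{LevyPtStongCont}) together with the fact that the evaluation map $u\mapsto u(x)$ is a bounded functional on $E$, since $|u(x)|\le(1+x^2)\norm{u}_E$, and with the quasi-contraction bound \eqref{LStable}.

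Next I would show that $\psi$ is right-differentiable with right-derivative $P_tg(x)$. By the semigroup property, writing $g_h:=\tfrac{P_hf-f}{h}$,
\[
\frac{\psi(t+h)-\psi(t)}{h}=P_t\!\left(\frac{P_hf-f}{h}\right)(x)=\E{\,g_h(x+X_t)\,}.
\]
The hypothesis \eqref{Ldiaz} gives $g_h(y)\to g(y)$ for every $y$, so passing the limit $h\to0$ inside the expectation would yield $\psi'_+(t)=\E{g(x+X_t)}=P_tg(x)$. Since $t\mapsto P_tg(x)$ is continuous and a continuous function with continuous right-derivative is of class $C^1$ (the same remark on absolute continuity used in the proof of Lemma \ref{lemmaMarkov}), the fundamental theorem of calculus would deliver the integral identity above, and Remark \ref{R100} then concludes that $f\in D(L)$ and $Lf=g$.

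The main obstacle is precisely the interchange of $\lim_{h\to0}$ with the expectation defining $P_t$: one needs a bound on $y\mapsto g_h(y)$, uniform for small $h$, that is integrable against the law of $x+X_t$. Here the square-integrability of the L\'evy process is essential. In the situation that drives Theorem \ref{thLevyGen}, namely $f\in E^2_0$, I would reuse the explicit representation obtained in the proof of Lemma \ref{appLem1},
\[
\frac{P_hf-f}{h}(y)=c_1 f'(y)+\int_0^1(1-a)\,\frac1h\,\E{f''(aX_h+y)X_h^2}\,da,
\]
with $c_1,c_2$ as in \eqref{LevySqInt12}. As $f''$ is bounded and $f'$ has at most linear growth, this yields $\left|g_h(y)\right|\le C(1+|y|)$ for $0<h\le1$, while $\E{1+|x+X_t|}\le 1+|x|+\sqrt{c_2t+c_1^2t^2}$ is finite and locally bounded in $t$; dominated convergence then justifies the interchange. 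An essentially equivalent route avoids the pointwise identity by using the resolvent $R_\lambda=(\lambda I-L)^{-1}$, which exists for $\lambda>\omega$ by \eqref{HilleYosidaQuasiContr}: since $R_\lambda g_h=\tfrac1h(P_hR_\lambda f-R_\lambda f)\to LR_\lambda f=\lambda R_\lambda f-f$ in $E$ (because $R_\lambda f\in D(L)$), while $R_\lambda g_h(x)\to R_\lambda g(x)$ pointwise, one obtains $f=R_\lambda(\lambda f-g)$, hence $f\in D(L)$ and $Lf=g$. The second convergence rests again on the same dominated-convergence step, so the crux of the argument is unchanged.
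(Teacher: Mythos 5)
Your reduction to Remark \ref{R100} is structurally sound (evaluation at $x$ is a bounded functional on $E$, so the scalar identity $P_tf(x)-f(x)=\int_0^t P_ug(x)\,du$ for all $x$ does imply $f\in D(L)$ and $Lf=g$), but the decisive step --- passing the limit $h\to 0$ inside $\E{g_h(x+X_t)}$, where $g_h:=\frac{P_hf-f}{h}$ --- is exactly what the lemma's hypotheses do not allow. The lemma assumes only that $g_h\to g$ pointwise for an \emph{arbitrary} $f\in\widetilde{E}$; this supplies no dominating function for the family $(g_h)_{0<h\le 1}$, and pointwise convergence alone cannot produce one. The bound $|g_h(y)|\le C(1+|y|)$ you invoke comes from the Taylor representation and requires $f''$ bounded and $f'$ of linear growth, i.e. $f\in E^2_0$. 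So what your argument actually establishes is the special case of the lemma with the extra hypothesis $f\in E^2_0$ --- sufficient for the downstream application in Theorem \ref{thLevyGen}, but not the lemma as stated, whose content is precisely that no such regularity of $f$ is needed. Your resolvent variant has the same defect, as you yourself note: the convergence $R_\lambda g_h(x)\to R_\lambda g(x)$ rests on the identical unjustified interchange.

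The paper's proof supplies the idea that is missing: it avoids domination entirely, because it uses the pointwise limit \eqref{Ldiaz} only at a single well-chosen point. One defines $L^\# f=g$ through \eqref{Ldiaz} and proves that $qI-L^\#$ is injective for $q>|c_1|+c_2$ by a positive maximum principle: if $(qI-L^\#)f=0$ and $f^+\not\equiv 0$, pick $x_1$ maximizing $f^+(x)/(1+x^2)$ --- a maximizer exists because $\widetilde{f^+}$ vanishes at infinity, which is where membership in $\widetilde{E}$ (rather than $E$) is essential --- and the one-sided bound $\E{f(x_1+X_t)}\le f(x_1)\,\E{1+(x_1+X_t)^2}/(1+x_1^2)$ yields in the limit $L^\#f(x_1)\le(|c_1|+c_2)f(x_1)$, a contradiction with $qf(x_1)=L^\#f(x_1)$. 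Injectivity is then combined with surjectivity of $qI-\widetilde{L}$ for the restricted semigroup on $\widetilde{E}$ (Hille--Yosida for quasi-contractions, \eqref{HilleYosidaQuasiContr}): writing $(qI-L^\#)f=(qI-\widetilde{L})v$ with $v\in D(\widetilde{L})$ and applying injectivity to $f-v$ gives $f=v\in D(\widetilde{L})\subset D(L)$. To salvage your route you would have to add a growth or domination hypothesis to the statement, which weakens the lemma; as it stands, the maximum-principle/resolvent-range argument is the step your proposal is missing.
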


\begin{proof}
We first introduce a restriction $\widetilde{P}$ of the semigroup $P$ to the
 linear subspace $\widetilde E$.
By Lebesgue dominated convergence theorem
and the fact that $\frac{1+ (X_t+x)^2}{1+ x^2} \le 2(\vert X_t \vert^2 + 1),$
 one can show that
 $P_t f \in \widetilde E$ for any $f \in \widetilde E$, $t \ge 0$.
Hence
 $(\widetilde P_t)$ is a semigroup on $\widetilde E$; 
 we denote by $\widetilde{L}$  its infinitesimal generator. 

As in \cite[Lemma 31.7]{SatoBook}, we denote by $L^\#f=g$, the operator defined by the equation \eqref{Ldiaz} for $f, g \in \widetilde E$
and by  $D(L^\#)$ its domain, i.e. the set of functions $f$
for which \eqref{Ldiaz} exists. 
 Then $L^\#$ is an extension of $\widetilde L$.

Fix $q>|c_1|+ c_2$. We prove first that
\begin{equation} \label{Eqlf} 
\forall f\in D(L^\#)   \quad   (qI - L^\#)f=0 \Rightarrow f=0.
\end{equation}

Let $f\in D(L^\#)$ such that $(qI - L^\#)f=0$. We denote $f^-=-(f\wedge 0)$ and $f^+=f \vee 0$. Suppose that $f^+ \neq 0$.
Since $\widetilde {f^+}$ is continuous and vanishing at infinity,
  there exists $x_1$ such that
$\frac{f^+(x_1)}{1+ x_1^2}=\displaystyle\max_{x} \dfrac{f^+(x)}{1+x^2} >0$.
Moreover $f(x_1) = f^+(x_1)$ .
Then
$$
\frac{\E{f(x_1+X_t)-f(x_1)}}{t} \leq  \frac{1}{t} \left( f(x_1)\frac{\E{1+(x_1+X_t)^2}}{1+x_1^2}- f(x_1)\right).
$$
Passing to the limit when $t \rightarrow 0$ it follows
$ L^\#f(x_1) \leq  f(x_1) (|c_1|+ c_2). $
Then
$(q-|c_1|- c_2)f(x_1) \leq 0, $
which contradicts the fact that $f(x_1)>0$. Hence, $f^+ = 0$. With similar arguments, we can show that $f^- = 0$ and so $f=0$,
which proves \eqref{Eqlf}.

By restriction, $(\widetilde P_t)$ fulfills $\Vert \widetilde P_t \Vert \le e^{\omega t}$, in particular it is
a quasi-contraction semigroup, so by 
 \eqref{HilleYosidaQuasiContr}, we can certainly choose $q>\max(|c_1|+ c_2,\omega)$, so that
 $qI-\widetilde{L}$ is invertible and  $R(qI-\widetilde{L})=\widetilde E$.\\
We observe that  $ D(\widetilde L) \subset D(L^\#)$. Let now
 $f\in D(L^\#)$; then $(qI-L^\#)f \in \widetilde E
=R(qI-\widetilde{L})$. Consequently, there is $v\in D(\widetilde{L})$ such that $(qI-L^\#)f = (qI-\widetilde{L})v$. So, $(qI-L^\#)(f-v)=0$.

By \eqref{Eqlf},  $(qI-L^\#)$ is injective,  so $f=v$ and $f\in D(\widetilde{L})$.
Consequently $\widetilde L f$ is given by $g$ defined in \eqref{Ldiaz}.
Finally, the fact that $D(\widetilde{L}) \subset D(L)$ and $\widetilde{L}$
is a restriction of $L$ allow to conclude the proof
of Lemma \ref{appLem2}.

\end{proof} 

We continue the proof of Theorem \ref{thLevyGen} making use  of
Lemmas \ref{appLem1} and Lemma \ref{appLem2}.\\
First, let us prove that $E^2_0 \subset \widetilde E$. Indeed by Taylor expansion, we have, for $f\in  E_0^2$
 $$\frac{f(x)}{1+x^2} =\frac{f(0)}{1+x^2} + \frac{x}{1+x^2} f^\prime(0) + \frac{x^2}{1+x^2} \int_0^1 (1-\alpha) f''(x\alpha) d\alpha.$$
 
 Since $\lim_{x\rightarrow\infty} f''(x\alpha)=0$ for all $\alpha\in ]0,1[$,
 then by Lebesgue theorem, we have that $\lim_{x\rightarrow\infty}  \frac{f(x)}{1+x^2}=0$, so $f\in \widetilde E$.
 
By Lemma \ref{appLem1}, it follows 
\begin{equation*}
\lim_{t \to 0} \dfrac{P_t f-f}{t}(x)  = L_0f(x), \; \forall x\in\R,
\end{equation*} where $L_0$ is given in \eqref{proofL}. In order to apply Lemma \ref{appLem2}, it remains to  show that 
$L_0f \in\widetilde E$. Using   relation \eqref{GenLevyTaylor}, for $x\in\R$ we get
$$
\frac{L_0f(x)}{1+x^2}= c_1 \frac{f^\prime(x)}{1+x^2} + \int_0^1 (1-a) \int_\R y^2 \frac{f^{''}(ay+x)}{1+x^2} \nu(dy) da.
$$
Since $f\in E^2_0$, then  $f''$ is bounded and $f^\prime$ has linear growth.
 So, the fact that $\int_\R y^2 \nu(dy) <\infty$  implies 
indeed $\lim_{x\to \infty}\frac{L_0f(x)}{1+x^2}=0$ and $L_0f\in \widetilde E$.

Finally, Lemma \ref{appLem2} implies that $E_0^2 \subset D(L)$ and
for $f\in E^2_0 $, 
$Lf$ is given by \eqref{L_Levy}.
 
\end{appendices}

\noindent {\bf ACKNOWLEDGEMENTS}: 
The authors are grateful to both referees, the associated editor and
the editor in chief for helping them to improve the first
version of the paper.
Financial support was provided  by the ANR Project  MASTERIE 2010 BLAN--0121--01.
The second named author  also benefited partially from the
support of the ``FMJH Program Gaspard Monge in optimization and operation
research'' (Project 2014-1607H).

\bibliographystyle{plainnatMod}
\bibliography{biblio}
\end{document}